\newcommand{\A}{\mathcal{A}}
\newcommand{\M}{\mathcal{M}}
\newcommand{\N}{\mathcal{N}}
\newcommand{\h}{\mathcal{H}}
\newcommand{\s}{\mathcal{S}}
\newcommand{\abs}[1]{\left\lvert{#1}\right\rvert}
\newcommand{\floor}[1]{\left\lfloor{#1}\right\rfloor}
\DeclareMathOperator{\ex}{ex}
\newtheorem{lemma}{Lemma}
\newtheorem{theorem}{Theorem} 
\newtheorem{corollary}{Corollary} 
\newtheorem{definition}{Definition}
\newtheorem{claim}{Claim}
\newtheorem{conjecture}{Conjecture}
\newtheorem{remark}{Remark}
\newtheorem{proposition}{Proposition}
\author{Ervin Gy\H{o}ri\affiliationmark{1}\thanks{The research was supported by the National Research, Development and Innovation Office grants  K116769, K117879 and K126853.}
  \and Nika Salia\affiliationmark{1,2}\thanks{The research was supported by the National Research, Development and Innovation Office grants  K116769, K117879 and K126853 as well as the Shota Rustaveli National Science Foundation of Georgia SRNSFG, grant number DI-18-118.}
  \and Casey Tompkins\affiliationmark{1,3}\thanks{The research was supported by the National Research, Development and Innovation Office grants  K116769, K117879 and K126853.}
  \and Oscar Zamora\affiliationmark{2,4}}
  \title[Formatting an article for DMTCS]{How to format an article
  for DMTCS\\
  with the journal's own \LaTeX-style}
\title{The maximum number of $P_\ell$ copies in $P_k$-free graphs}
\affiliation{
  % one line per affiliation, no postal codes, grant numbers or similar
 Alfr\'ed R\'enyi Institute of Mathematics, Hungarian Academy of Sciences.\\
 Central European University, Budapest.\\
  Karlsruhe Institute of Technology, Karlsruhe, Germany.\\
  Universidad de Costa Rica, San Jos\'e. }
\keywords{path, cycle, extremal, generalized Tur\'an}
\begin{document}
\publicationdetails{21}{2019}{1}{14}{4958}
\maketitle
\begin{abstract}
Generalizing Tur\'an's classical extremal problem, Alon and Shikhelman investigated the problem of maximizing the number of copies of $T$ in an $H$-free graph, for a pair of graphs $T$ and $H$.  Whereas Alon and Shikhelman were primarily interested in determining the order of magnitude for some classes of graphs $H$, we focus on the case when $T$ and $H$ are paths, where we find asymptotic and exact results in some cases. We also consider other structures like stars and the set of cycles of length at least $k$, where we  derive asymptotically sharp estimates.  Our results generalize well-known extremal theorems of Erd\H{o}s and Gallai.
\end{abstract}

\section{Introduction}
For a graph $G$, we let $e(G)$ denote the number of edges in $G$, and for a given graph $H$, we let $\N(H,G)$ denote the number of (not necessarily induced) copies of $H$ in $G$. If there is no copy of $H$ in $G$, we say that $G$ is $H$-free.  We denote the path with $k$ edges by $P_k$ and the cycle with $k$ edges by $C_k$.  By $C_{\ge k}$ we mean the set of all cycles of length at least $k$. By $S_k$ we denote the star on $k+1$ vertices.   Given a graph $G$ containing a vertex $v$, we denote the neighborhood of $v$ by $N(v)$.  The independence number, minimum degree and number of vertices in $G$ are denoted by $\alpha(G)$, $\delta(G)$ and $v(G)$, respectively. The vertex and edge sets of $G$ are denoted by $V(G)$ and $E(G)$, respectively.  Finally, given a set $S \subseteq V(G)$, we denote by $G[S]$ the induced subgraph of $G$ with vertex set $S$.

Following the notation of \cite{alon2016many}, we let $\ex(n,T,H)$ be the maximum number of (noninduced) copies of $T$ in an $H$-free graph on $n$ vertices.  Observe that we have $\ex(n,P_1,H) = \ex(n,H)$, the classical extremal number. If a set of graphs $\h$ is forbidden, then we define $\ex(n,\h)$ (and similarly $\ex(n,T,\h)$), in the obvious way.

We begin by recalling the famous theorem of Erd\H{o}s and Gallai on $P_k$-free graphs as well as some recent generalizations due to Luo, where the number of cliques is considered.

\begin{theorem}[\cite{erdHos1959maximal}]
\label{eg}
For all $n \ge k$,
\begin{displaymath}
\ex(n,P_k) \le \frac{(k-1)n}{2},
\end{displaymath}
and equality holds if and only if $k$ divides $n$ and $G$ is the disjoint union of cliques of size $k$.
\end{theorem}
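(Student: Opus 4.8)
The plan is to prove the bound and the equality characterization simultaneously by strong induction on $n\ge k$; the case $k=1$ is trivial, so assume $k\ge 2$. For the base case $n=k$ one just observes that $e(G)\le\binom{k}{2}=\frac{(k-1)n}{2}$, with equality exactly when $G=K_k$, and that $K_k$ is $P_k$-free. For the inductive step I would split according to the structure of $G$.

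If $G$ is disconnected, apply the statement to each connected component $G_i$ (each has fewer than $n$ vertices). For a component with $v(G_i)\ge k$ the inductive hypothesis gives $e(G_i)\le\frac{(k-1)v(G_i)}{2}$; for a component with $v(G_i)<k$ the trivial bound $e(G_i)\le\binom{v(G_i)}{2}\le\frac{(k-1)v(G_i)}{2}$ applies, and it is strict unless $v(G_i)\le 1$. Summing yields the bound, and following the equality conditions forces every component to be a single $K_k$, whence $k\mid n$.

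If $G$ is connected, the key dichotomy is on minimum degree. If some vertex $v$ has $\deg(v)\le\frac{k-1}{2}$, delete it: $G-v$ is $P_k$-free on $n-1\ge k$ vertices, so $e(G)=e(G-v)+\deg(v)\le\frac{(k-1)(n-1)}{2}+\frac{k-1}{2}=\frac{(k-1)n}{2}$. If equality held then $G-v$ would be a disjoint union of copies of $K_k$ and $\deg(v)=\frac{k-1}{2}$; but connectivity gives $v$ a neighbour $u$ in some component $C\cong K_k$, and prepending the edge $vu$ to a Hamiltonian path of $C$ starting at $u$ produces a $P_k$ — contradiction, so the inequality is in fact strict here. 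Otherwise $\delta(G)>\frac{k-1}{2}$, hence $2\delta(G)\ge k$; then I would invoke the folklore fact that a connected graph with minimum degree at least $d$ contains a path with at least $\min(v(G)-1,2d)$ edges (proved by taking a longest path and using the standard rotation argument to reassemble it into a cycle on the same vertex set, then extending via connectivity). Since $n-1\ge k$ and $2\delta(G)\ge k$, this forces a $P_k$ in $G$, so this case cannot occur at all.

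Finally, one checks directly that a disjoint union of copies of $K_k$ (with $k\mid n$) is $P_k$-free and attains $\frac{(k-1)n}{2}$ edges, completing the characterization. The main obstacle is precisely the connected, high-minimum-degree case: the rest is bookkeeping about components and a single vertex deletion, but there one genuinely needs the longest-path rotation argument (equivalently, the Erd\H{o}s--Gallai/Dirac-type lemma bounding path length below in terms of minimum degree) rather than a one-line degree count.
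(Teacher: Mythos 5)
Your proof is essentially correct, but note that the paper itself offers no proof of this statement: it is quoted from Erd\H{o}s and Gallai, and the paper only records that the original deduction passes through the companion bound $\ex(n,C_{\ge k})\le\frac{(k-1)(n-1)}{2}$ for graphs with no long cycles. Your route is genuinely different: a direct induction on $n$ with a dichotomy on $\delta(G)$, killing the low-degree case by deleting a vertex of degree at most $\frac{k-1}{2}$ and the high-degree case by the Dirac-type lemma that a connected graph contains a path with at least $\min(v(G)-1,2\delta(G))$ edges. Both routes ultimately rest on the same longest-path rotation argument (yours inside the folklore lemma, Erd\H{o}s--Gallai's inside the cycle theorem); yours is self-contained and yields the equality characterization along the way, while the cycle-first route gives the connected refinement (in the spirit of Theorem \ref{connEG}) as a byproduct. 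Two small repairs. First, in the disconnected case the chain $e(G_i)\le\binom{v(G_i)}{2}\le\frac{(k-1)v(G_i)}{2}$ is strict for \emph{every} component with $1\le v(G_i)\le k-1$, including singletons, since an isolated vertex contributes $0<\frac{k-1}{2}$ when $k\ge 2$; your exception ``unless $v(G_i)\le 1$'' would, taken literally, let isolated vertices survive the equality analysis and spoil the characterization, though the fix is one line. Second, in the equality analysis of the low-degree case you should say explicitly that $\deg(v)=\frac{k-1}{2}$ forces $k$ odd and $\deg(v)\ge 1$ (and is impossible for $k=2$), so the neighbour $u$ used to extend a Hamiltonian path of its $K_k$ component into a forbidden $P_k$ really exists. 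With those points made, the case analysis is exhaustive and the argument goes through.
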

In their paper, Erd\H{o}s and Gallai deduced Theorem \ref{eg} as a corollary of the following result about graphs with no long cycles.  
\begin{theorem}[\cite{erdHos1959maximal}]
For all $n \ge k$,
\begin{displaymath}
\ex(n,C_{\ge k}) \le \frac{(k-1)(n-1)}{2},
\end{displaymath}
and equality holds if and only if $k-2$ divides $n-1$ and $G$ is a connected graph such that every block of $G$ is a clique of size $k-1$.
\end{theorem}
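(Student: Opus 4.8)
The plan is to prove the upper bound by strong induction on $n$, reducing the problem to $2$-connected graphs via a block decomposition and then invoking the classical bound of Dirac that a $2$-connected graph $G$ on at least three vertices contains a cycle of length at least $\min\{v(G),2\delta(G)\}$.

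For the base case $n\le k-1$ we simply have $e(G)\le\binom{n}{2}=\frac{n(n-1)}{2}\le\frac{(k-1)(n-1)}{2}$. Now suppose $n\ge k$. If $G$ is disconnected or has a cut vertex, write $G=G_1\cup G_2$ with $V(G_1)\cap V(G_2)$ empty (disconnected case) or a single vertex (cut-vertex case); then $v(G_1)+v(G_2)\le n+1$, each $G_i$ has fewer than $n$ vertices and is $C_{\ge k}$-free, so by induction $e(G)=e(G_1)+e(G_2)\le\frac{(k-1)(v(G_1)-1)}{2}+\frac{(k-1)(v(G_2)-1)}{2}\le\frac{(k-1)(n-1)}{2}$, with strict inequality in the disconnected case. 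If instead $G$ is $2$-connected, then since $G$ has no cycle of length $\ge k$ and $n\ge k>k-1\ge$ (circumference), Dirac's bound forces $2\delta(G)\le k-1$; deleting a vertex $v$ of minimum degree leaves a connected $C_{\ge k}$-free graph on $n-1\ge k-1$ vertices, so by induction (or the base case) $e(G)=e(G-v)+\deg(v)\le\frac{(k-1)(n-2)}{2}+\frac{k-1}{2}=\frac{(k-1)(n-1)}{2}$.

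For the equality characterization I would track when each inequality above is tight. The disconnected case is always strict, so an extremal $G$ is connected. In the cut-vertex case, equality forces equality for both $G_i$, so by induction each $G_i$ is connected with every block equal to $K_{k-1}$ and $(k-2)\mid v(G_i)-1$; identifying two such graphs at a single vertex creates no new block, so $G$ has the same structure and $(k-2)\mid n-1$. In the $2$-connected case with $n\ge k$, equality would force $k$ odd, $\deg(v)=\frac{k-1}{2}$, and equality for $G-v$; this is impossible, since if $n-1\ge k$ then $G-v$ is a union of copies of $K_{k-1}$ glued along cut vertices, and re-adding a vertex of degree $\frac{k-1}{2}<k-1$ cannot yield a $2$-connected graph, while if $n=k$ then $G-v=K_{k-1}$ and $K_{k-1}$ together with a vertex of degree $\frac{k-1}{2}\ge2$ contains a Hamilton cycle of length $k$, contradicting $C_{\ge k}$-freeness. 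Conversely, every connected graph all of whose blocks are $K_{k-1}$ has $\binom{k-1}{2}$ edges in each block and $n-1=(k-2)\cdot(\text{number of blocks})$ vertices, hence meets the bound; in particular $k-2$ must divide $n-1$.

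I expect the main obstacle to be the $2$-connected case, which genuinely requires a statement linking minimum degree to circumference (Dirac's theorem, or equivalently a rotation--extension argument on a longest path): large minimum degree alone yields long paths but not long cycles without a connectivity hypothesis, as the example of two cliques sharing a single vertex shows. The accompanying equality analysis in this case also relies on the small ad hoc observation that $K_{k-1}$ with one extra low-degree vertex attached still contains a cycle of length $k$.
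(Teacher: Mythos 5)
The paper states this theorem only as a citation to Erd\H{o}s and Gallai and contains no proof of its own, so your argument can only be judged on its merits. Your upper-bound proof is correct: the strong induction with base case $n\le k-1$, the disconnected/cut-vertex reduction with $v(G_1)+v(G_2)\le n+1$, and Dirac's bound $\mathrm{circ}(G)\ge\min\{v(G),2\delta(G)\}$ in the $2$-connected case (forcing $\delta(G)\le\frac{k-1}{2}$, then deleting a minimum-degree vertex) all work, and the arithmetic closes. One bookkeeping point: in the cut-vertex equality analysis you invoke the induction hypothesis for $G_i$, but the characterization of equality is only part of the statement for $v(G_i)\ge k$; you also need to observe that for $2\le v(G_i)\le k-1$ equality in $e(G_i)\le\binom{v(G_i)}{2}\le\frac{(k-1)(v(G_i)-1)}{2}$ forces $G_i=K_{k-1}$. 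This is easy but should be folded into the induction.

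The genuine flaw is in the $2$-connected equality case with $n-1\ge k$. You assert that re-adding a vertex of degree $\frac{k-1}{2}<k-1$ to a connected graph whose blocks are copies of $K_{k-1}$ ``cannot yield a $2$-connected graph.'' That is false: for $k=5$ let $H$ be two copies of $K_4$ sharing a vertex $u$, and attach $v$ to one non-cut vertex of each block. The result is $2$-connected (deleting $u$ leaves the two triangles joined through $v$), and it even has exactly $\frac{(k-1)(n-1)}{2}=14$ edges --- in general, low-degree vertices can destroy cut vertices, just as attaching a degree-$2$ vertex to the two ends of a path yields a cycle. What actually rules this configuration out is a long cycle, not a connectivity obstruction: if $G-v$ has a cut vertex $u$, then $2$-connectivity of $G$ forces $v$ to have neighbours $a,b$ in two distinct components of $(G-v)-u$, and concatenating Hamiltonian paths of the relevant $K_{k-1}$-blocks from $a$ to $u$ and from $u$ to $b$ with the edges $va$ and $vb$ gives a cycle of length at least $2(k-2)+2\ge k$, contradicting $C_{\ge k}$-freeness. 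This is the same trick you already use for $n=k$; with that substitution the equality characterization goes through.
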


As the extremal examples for Theorem \ref{eg} are disconnected, it is natural to consider a version of the problem where the base graph is assumed to be connected. \cite{kopylov1977maximal} settled this problem, and later 
\cite{balister2008connected} classified the extremal cases. 

\begin{definition}
We denote by $G_{n,k,a}$ the graph whose vertex set is partitioned into 3 classes, $A, B$ and $C$ with $\abs{A}=a$, $\abs{B} = n-k+a$, $\abs{C} =k-2a$ such that $A \cup C$ induces a clique, $B$ is an independent set and all possible edges are taken between vertices of $A$ and $B$. (See Figure \ref{connegfig}.)
\end{definition}

 Throughout this paper we let $t = \floor{\frac{k-1}{2}}$. In $G_{n,k,t}$, the class $C$ has one vertex when $k$ is odd or two vertices when $k$ is even.  By grouping $B$ and $C$ together, we have that $G_{n,k,t}$  is obtained from a complete bipartite graph $K_{t,n-t}$ by adding all edges in the color class of size $t$, and in the case that $k$ is even, adding one additional edge inside the color class of size $n-t$.

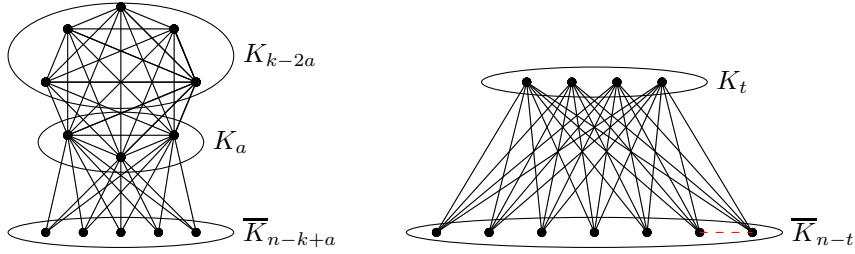
\begin{figure}
\centering
\label{connegfig}

\begin{tikzpicture}

\foreach \xi in {0,45,...,360}{
\pgfmathsetmacro{\z}{\xi+45}
\foreach \yi in {\xi,\z,...,360}{

\filldraw  (xyz polar cs:angle=\xi,radius=1) circle (1.5pt) -- (xyz polar cs:angle=\yi,radius=1) circle (1.5pt);

}
}

\foreach \x in{-1,-0.5,...,1}{
\foreach \y in {225,270,...,315}{

\filldraw (xyz polar cs:angle=\y,radius=1) -- (\x,-2) circle (1.5pt);

}

}

\draw  (1.5,-2) arc (0:360:1.5cm and .2cm) node[align=center,right]{$\overline{K}_{n-k+a}$};

   \draw (1.1,-.8) arc (0:360: 1.1cm and .4cm) node[align=center,right]{$K_{a}$};

\draw (1.5,.35) arc(0:360: 1.5cm and .7cm) node[align=center,right]{$K_{k-2a}$};

\end{tikzpicture} \qquad
\begin{tikzpicture}

\foreach \x in {-2.1,-1.4,...,2.8}{
%\foreach \x in {-2.1,-1.4,...,2.8}{
\foreach \y in {-.9,-.3,...,.9}{

\filldraw (\y,0) circle (1.5pt) -- (\x,-2) circle (1.5pt);

}
}

\draw[red, dashed] (1.4,-2) -- (2.1,-2);

\draw  (2.5,-2) arc (0:360:2.5cm and .2cm) node[align=center,right]{$\overline{K}_{n-t}$};

\draw (1.5,0) arc(0:360: 1.5cm and .2cm) node[align=center,right]{$K_{t}$};

\end{tikzpicture}

%\begin{tikzpicture}
%\foreach \xi in {0,45,...,360}{
%\pgfmathsetmacro{\z}{\xi+45}
%\foreach \yi in {\xi,\z,...,360}{

%\filldraw  (xyz polar cs:angle=\xi,radius=1) circle (1.5pt) -- (xyz polar cs:angle=\yi,radius=1) circle (1.5pt);

%}
%}

%\foreach \x in{0,72,...,360}{
%\foreach \y in {15,45,...,375}{

%\filldraw (xyz polar cs:angle=\x,radius=1) -- (xyz polar cs:angle=\y,radius=2) circle (1.5pt);

%}

%}

%\draw[red,dashed] (xyz polar cs:angle=315,radius=2) -- (xyz polar cs:angle=345,radius=2);

%\draw  (1.5,-2) arc (0:360:1.5cm and .2cm) node[align=center,right]{$\overline{K}_{n-k+a}$};

 %  \draw (1.1,-.8) arc (0:360: 1.1cm and .4cm) node[align=center,right]{$K_{a}$};

% \draw (1.5,.35) arc(0:360: 1.5cm and .7cm) node[align=center,right]{$K_{k-2a}$};

%\end{tikzpicture}
\caption{The graph $G_{n,k,a}$ is pictured on the left, and the special case of $G_{n,k,t}$ is pictured on the right.  The dashed edge appears only when $k$ is even.}
\end{figure}

%---------------------------

\begin{theorem}[\cite{kopylov1977maximal}, \cite{balister2008connected}]
\label{connEG}
Let $G$ be a connected $n$-vertex $P_k$-free graph, with $n\geq k$, then 
\begin{displaymath}
e(G) \le \max(e(G_{n,k,t}),e(G_{n,k,1})).
\end{displaymath}
Moreover, the extremal graph is either $G_{n,k,t}$ or $G_{n,k,1}$.
\end{theorem}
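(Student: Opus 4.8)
The statement is Kopylov's theorem together with the extremal-graph classification of Balister, Gy\H{o}ri, Lehel and Schelp, and here is how I would organise a proof. Two features of the two candidate graphs guide the argument. First, $G_{n,k,1}$ is just $K_{k-1}$ with $n-k+1$ pendant edges at one vertex of the clique, so it has a cut vertex, whereas $G_{n,k,t}$ is $2$-connected once $t\ge 2$; hence one should split according to whether $G$ is $2$-connected. Second, both candidates have the form ``a clique $K_{k-a}$ together with $n-(k-a)$ further vertices, each joined to a single fixed $a$-element subset of the clique'' --- namely $a=1$ and $a=t$ --- which is exactly the shape produced by Kopylov's disintegration, the parameter $a$ emerging as the optimum of a convex function. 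Accordingly I set $f(n,k,a):=\binom{k-a}{2}+a(n-k+a)$, so that $f(n,k,1)=e(G_{n,k,1})$ and $f(n,k,t)=e(G_{n,k,t})$, and I record that $a\mapsto f(n,k,a)$ is convex, whence its maximum over an interval of integers is attained at an endpoint; in particular $f(n,k,2)\le\max\bigl(f(n,k,1),f(n,k,t)\bigr)$.

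\emph{Reduction to the $2$-connected case.} Suppose $G$ has a cut vertex $v$, and let $G_1,\dots,G_m$ ($m\ge2$) be the subgraphs of $G$ induced by $v$ together with the vertex set of each component of $G-v$, so that $e(G)=\sum_i e(G_i)$ and $\sum_i\bigl(v(G_i)-1\bigr)=n-1$. If $p_i$ denotes the number of edges of a longest path of $G_i$ starting at $v$, then concatenating a longest $v$-path of $G_i$ with one of $G_j$ yields a path of $G$ with $p_i+p_j$ edges, so $P_k$-freeness forces $p_i+p_j\le k-1$ whenever $i\ne j$. Ordering so that $p_1\ge p_2\ge\cdots$, every lobe $G_i$ with $i\ge 2$ is a connected graph in which no path starting at $v$ has more than $k-1-p_1$ edges; a rooted strengthening of the theorem, proved by the same induction, bounds $e(G_i)$ for such graphs. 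Substituting these bounds into $e(G)=\sum_i e(G_i)$ and optimising over the sizes of the lobes gives $e(G)\le\max\bigl(e(G_{n,k,t}),e(G_{n,k,1})\bigr)$, with equality for a graph that is not $2$-connected forcing $G\cong G_{n,k,1}$. So from now on $G$ is $2$-connected.

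\emph{The $2$-connected case.} The plan here is to pass to circumference. Since the small cases are easily checked, assume $n\ge k+1$; then $G$ is $C_{\ge k}$-free, for a cycle of length at least $k+1$ already contains $P_k$, while a cycle $C$ of length exactly $k$, together with a vertex outside it (one exists) and two internally disjoint paths from that vertex to $C$ (Menger's theorem), can be spliced into a path with at least $k$ edges. Now run Kopylov's disintegration: with $t=\floor{\frac{k-1}{2}}$, repeatedly delete a vertex of degree at most $t$, and let $H$ be the subgraph that remains, so that $e(G)\le e(H)+t\bigl(n-v(H)\bigr)$. Every vertex of $H$ has degree at least $t+1$; since $2(t+1)\ge k$, the Erd\H{o}s--Gallai lemma that a $2$-connected graph of minimum degree $d$ has a cycle of length at least $\min\bigl(v,2d\bigr)$, together with the path-from-long-cycle observation above, shows that a subgraph of $G$ of minimum degree at least $t+1$ and with at least $k$ vertices would contain $P_k$; hence $v(H)\le k-1$. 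A short structural argument then bounds $e(H)$ and, in the extremal case, identifies $H$ with a complete graph of the right order (this is also where one restores the $2$-connectivity that the disintegration can destroy, by arguing block by block). Plugging this back in yields $e(G)\le f(n,k,a)$ for some $a\in\{2,\dots,t\}$, and convexity gives $f(n,k,a)\le\max\bigl(f(n,k,1),f(n,k,t)\bigr)=\max\bigl(e(G_{n,k,1}),e(G_{n,k,t})\bigr)$, as claimed.

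\emph{Where the difficulty lies.} The edge bound on its own is the least delicate part; the real work --- and the content of the Balister--Gy\H{o}ri--Lehel--Schelp refinement --- is to follow equality through every stage: to show that a maximiser with a cut vertex is exactly $G_{n,k,1}$, that the disintegration of a $2$-connected maximiser terminates at a clique rather than at some other dense graph, and to deal with the parity of $k$ (which shifts both the disintegration threshold and the order of the leftover clique by one) as well as the cases where $v(H)$ is close to $k-1$, where the crude bound $e(H)\le\binom{v(H)}{2}$ is not tight and one must exploit the presence of the deleted vertices. I would expect the bookkeeping around these equality cases, rather than any single new idea, to be the main obstacle.
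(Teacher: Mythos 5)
The paper does not prove this theorem: it is imported verbatim from \cite{kopylov1977maximal} and \cite{balister2008connected} with no argument given, so there is no in-paper proof to measure you against. Judged on its own, your proposal correctly identifies the standard route (reduction to the $2$-connected case, passing from $P_k$-freeness to $C_{\ge k}$-freeness, Kopylov's disintegration, and the convexity of $f(n,k,a)=\binom{k-a}{2}+a(n-k+a)$ in $a$), and the pieces you actually verify are sound: $f$ is convex, a connected $P_k$-free graph on more than $k$ vertices has no cycle of length $\ge k$ (indeed for a $k$-cycle one attaching edge already extends the Hamiltonian path of the cycle to a $P_k$), and $2(t+1)\ge k$ so a $2$-connected piece of the core with $\ge k$ vertices would violate the Erd\H{o}s--Gallai/Dirac circumference bound.

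However, as a proof this has genuine gaps rather than omitted routine detail. First, the cut-vertex reduction rests on a ``rooted strengthening'' --- an edge bound for connected graphs in which every path from a fixed root has at most $p$ edges --- that you neither state (what is its extremal function and extremal graph?) nor prove; saying it follows ``by the same induction'' begs the question of what quantity the induction runs on and why the rooted statement with parameter $k-1-p_1$ is available while the unrooted statement for $k$ is still being proved. The subsequent ``optimising over the sizes of the lobes'' is also not carried out, and it is exactly this optimisation that must produce $G_{n,k,1}$ as the unique non-$2$-connected maximiser. Second, the ``short structural argument'' that the disintegration core $H$ is a clique of the right order is the heart of Kopylov's proof (his lemma on long paths between two prescribed vertices of a $2$-connected graph); it cannot be waved at, particularly because $H$ need not be $2$-connected, so the circumference bound you invoke does not apply to $H$ directly --- your parenthetical ``block by block'' remark is precisely where the work lives. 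Without knowing that $H$ is complete on exactly $k-a$ vertices, the final step ``plugging back in yields $e(G)\le f(n,k,a)$'' has no basis. Third, the classification of the extremal graphs --- the ``moreover'' clause, which is the actual content of \cite{balister2008connected} --- is explicitly deferred in your last paragraph. So what you have is an accurate roadmap of the literature proof, not a proof.
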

Note that if $n \geq 5k/4$, this maximum is achieved by $G_{n,k,t}$.  Also observe that 
\begin{displaymath}
e(G_{n,k,t}) = t(n-t) + \binom{t}{2}+ \eta_k,
\end{displaymath}
where $\eta_k$ is $1$, if $k$ is even, and $0$ otherwise. Thus, Theorems \ref{eg} and \ref{connEG} yield the same bound asymptotically as $n$ tends to infinity.

The following theorem was deduced by \cite{luo2017maximum} as a corollary of her main result but also follows from Theorem \ref{eg} using a simple induction argument. We present this proof here.
\begin{theorem}[\cite{luo2017maximum}]
\label{luo}
\begin{displaymath}
\ex(n,K_r,P_k) \le \frac{n}{k} \binom{k}{r}.
\end{displaymath}
\end{theorem}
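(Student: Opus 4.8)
The plan is to prove the bound by induction on $k$, with $n$ and $r$ arbitrary, using Theorem~\ref{eg} at every step. For $k=1$ a $P_k$-free graph has no edges, so $\N(K_1,G)=n=\frac{n}{1}\binom{1}{1}$ while $\N(K_r,G)=0=\frac{n}{1}\binom{1}{r}$ for $r\ge 2$; this is the base case, and it is valid for every $r$.

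For the inductive step, suppose $k\ge 2$ and that the bound is known with $k-1$ in place of $k$, for every $r$ and every number of vertices. Let $G$ be a $P_k$-free graph on $n$ vertices and fix $r$; the case $r=1$ is the trivial identity $\N(K_1,G)=n=\frac{n}{k}\binom{k}{1}$, so assume $r\ge 2$. The first fact I would use is that for each vertex $v$ the induced neighbourhood $G[N(v)]$ is $P_{k-1}$-free, since a path with $k-1$ edges inside $N(v)$ together with $v$ would give a path with $k$ edges in $G$. The second is the double-counting identity
\[
r\cdot\N(K_r,G)=\sum_{v\in V(G)}\N\!\bigl(K_{r-1},G[N(v)]\bigr),
\]
obtained by counting in two ways the pairs consisting of a copy of $K_r$ in $G$ together with a chosen vertex of it: removing the chosen vertex from the copy gives a bijection with the pairs formed by a vertex $v$ together with a copy of $K_{r-1}$ in $G[N(v)]$.

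Then I would apply the inductive hypothesis to each $G[N(v)]$, which has $\deg(v)$ vertices and is $P_{k-1}$-free, to get $\N(K_{r-1},G[N(v)])\le\frac{\deg(v)}{k-1}\binom{k-1}{r-1}$, and sum over $v$: the right-hand side above is at most $\frac{1}{k-1}\binom{k-1}{r-1}\sum_v\deg(v)=\frac{2e(G)}{k-1}\binom{k-1}{r-1}$. Theorem~\ref{eg} now gives $2e(G)\le(k-1)n$ (and this bound is trivial when $n<k$), so $r\cdot\N(K_r,G)\le n\binom{k-1}{r-1}$, that is, $\N(K_r,G)\le\frac{n}{r}\binom{k-1}{r-1}=\frac{n}{k}\binom{k}{r}$, which closes the induction. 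I do not anticipate a serious obstacle here: the argument is short once one commits to inducting through the neighbourhood graphs rather than by deleting a low-degree vertex from $G$ --- the latter approach cannot reach the sharp constant, since in the extremal graph (a disjoint union of copies of $K_k$) every vertex lies in $\binom{k-1}{r-1}$ copies of $K_r$, a factor of $r$ more than a per-vertex deletion budget would allow. The only points needing care are the verification of the double-counting identity and the routine check that the Erd\H{o}s--Gallai bound applies uniformly in $n$.
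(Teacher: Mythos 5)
Your proof is correct and is essentially the paper's own argument: the same double-counting identity $r\cdot\N(K_r,G)=\sum_{v}\N(K_{r-1},G[N(v)])$, the same observation that $G[N(v)]$ is $P_{k-1}$-free, and the same application of the Erd\H{o}s--Gallai bound $2e(G)\le (k-1)n$. The only (immaterial) difference is bookkeeping: the paper names $r$ as the induction variable with Theorem~\ref{eg} as the base, whereas you induct on $k$; since both parameters decrease together in the step, the two framings coincide.
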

\begin{proof}
We use induction on $r$, and the base is Theorem \ref{eg}. Let $G$ be an $n$-vertex graph containing no $P_k$. We have
\begin{displaymath}
\N(K_r,G) = \frac{1}{r}\sum_{v\in V(G)}\N(K_{r-1},G[N(v)]) \le \frac{1}{r}\sum_{v\in V(G)}\frac{v(G[N(v)])}{k-1} \binom{k-1}{r-1} = \frac{1}{k(k-1)}\binom{k}{r}2e(G),
\end{displaymath}
since $G[N(v)]$ contains no $P_{k-1}$. By Theorem \ref{eg}, we have $e(G) \le \frac{(k-1)n}{2}$, and the result follows.
\end{proof}
For our results we will need only that $\ex(n,K_r,P_k) \le c_{k,r} n$ for some constant $c_{k,r}$ depending only on $k$ and $r$.

If we impose the additional condition that the graph is connected, then the situation is more complicated.  Luo proved the following sharp bounds.
\begin{theorem}[\cite{luo2017maximum}]
Let $n > k \ge 3$ and $G$ be a connected $n$-vertex graph with no path of length $k$, then
\begin{displaymath}
\N(K_r,G) \le \max\left(\N(K_r,G_{n,k,t}),\N(K_r,G_{n,k,1})\right).
\end{displaymath}
\end{theorem}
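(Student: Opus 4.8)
The plan is to follow the template of the proof of Theorem~\ref{connEG}, with the edge count replaced by the clique count throughout. First reduce to the $2$-connected case. The case $r=1$ is trivial (both sides equal $n$), so assume $r\ge 2$; then every copy of $K_r$, being a connected subgraph with an edge, lies inside a single block of $G$, so $\N(K_r,G)=\sum_B\N(K_r,B)$ over the blocks $B$ of $G$, whose orders satisfy $\sum_B\bigl(v(B)-1\bigr)=n-1$. Each block is an edge or a $2$-connected $P_k$-free graph, and for a block of order at least $k$ being $P_k$-free forces it to have no cycle of length $\ge k$ (a cycle of length $\ell$ carries a $P_{\ell-1}$, and when the block has a vertex off the cycle one further edge can be appended via $2$-connectedness, so a cycle of length $\ge k$ yields a $P_k$; for the threshold case one uses that a block of order exactly $k$ in $G$ cannot contain a Hamilton cycle when $n>k$). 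Thus the needed input is a clique version of Kopylov's circumference theorem.

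For this $2$-connected estimate I would prove: if $H$ is $2$-connected of order $m\ge k$ with no cycle of length $\ge k$, then $\N(K_r,H)\le\max\bigl(\N(K_r,G_{m,k,2}),\N(K_r,G_{m,k,t})\bigr)$, via Kopylov's disintegration argument adapted to cliques. The point is that deleting a vertex $v$ changes the count by exactly $\N(K_r,H)=\N(K_r,H-v)+\N(K_{r-1},H[N(v)])$ with $\N(K_{r-1},H[N(v)])\le\binom{\deg v}{r-1}$, so the bookkeeping that in the edge case tracks $\sum_v\deg v$ now tracks $\sum_v\binom{\deg v}{r-1}$. One repeatedly removes a vertex of minimum degree; a Bondy--Dirac type bound (a $2$-connected graph of minimum degree $d$ has circumference at least $\min(v(\cdot),2d)$, with the near-equality graphs essentially those of the form $G_{\cdot,k,a}$) forces the removed vertices to have degree at most the current threshold $a$ and the surviving ``core'' either to be $G_{\cdot,k,a}$-shaped or to have at most $k-a$ vertices, in which case the extremal configuration makes it a clique $K_{k-a}$; summing yields $\binom{k-a}{r}+(m-k+a)\binom{a}{r-1}=\N(K_r,G_{m,k,a})$. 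Since $\N(K_r,G_{m,k,a})$ is convex in $a$ on $\{2,\dots,t\}$ ($\binom{k-a}{r}$ is convex in $a$, and $(m-k+a)\binom{a}{r-1}$ is a product of a linear and a convex increasing factor), its maximum there is at $a=2$ or $a=t$, and $\binom{a-1}{r-1}\le\binom{a}{r-1}$ shows that reattaching the low-degree vertices never beats it.

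The last and hardest step is assembling the block estimates, and here it is essential to use $P_k$-freeness of $G$ itself, not merely of each block: a path may cross a cut vertex, so if blocks $B,B'$ share a cut vertex $x$ and $p_B(x)$ is the length of a longest path inside $B$ ending at $x$, then $p_B(x)+p_{B'}(x)\le k-1$, and such ``budget'' inequalities propagate along every path of the block--cut tree. (Without them the naive sum $\sum_B\N(K_r,B)$ can be far larger than the claimed bound, e.g.\ for many $K_{k-1}$'s chained in a path, which however is not $P_k$-free.) The budget constraint forces essentially one block to carry all the weight: if some block has order $\ge k$ then a longest path between two of its vertices already has length near $k-1$, collapsing every other block to a pendant edge, and if the block--cut tree contains a $K_{k-1}$ then all remaining blocks must be edges incident to a single vertex of it, which is exactly $G_{n,k,1}$. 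I would run an induction on the number of blocks: the base case (current graph of order at most $k$, hence a subgraph of $K_n$) is immediate; the $2$-connected case is the lemma above, noting $\N(K_r,G_{n,k,2})\le\max(\N(K_r,G_{n,k,1}),\N(K_r,G_{n,k,t}))$ again by convexity; and in the inductive step one splits off a leaf block $B$ at a cut vertex $x$, writes $\N(K_r,G)=\N(K_r,G-(V(B)\setminus\{x\}))+\N(K_r,B)$, applies the hypothesis to the smaller graph, and uses the budget inequality at $x$ to show the enlarged configuration still does not beat $G_{n,k,t}$ or $G_{n,k,1}$. The main obstacle is precisely this case analysis---equivalently, controlling disintegration cores that fail to be $2$-connected---which is the technical heart of the arguments of \cite{kopylov1977maximal} and \cite{balister2008connected}, now to be carried out while tracking clique counts rather than edge counts.
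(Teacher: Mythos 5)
First, a point of comparison: the paper does not prove this statement at all --- it is quoted from \cite{luo2017maximum} and used as a black box --- so there is no internal proof to measure yours against; the relevant benchmark is Luo's own argument. Your outline is essentially that argument: reduce to blocks, prove a clique analogue of Kopylov's circumference theorem for $2$-connected graphs by the disintegration method in which the quantity $\sum_v \binom{d(v)}{r-1}$ replaces $\sum_v d(v)$ via the identity $\N(K_r,H)=\N(K_r,H-v)+\N(K_{r-1},H[N(v)])$, compute $\N(K_r,G_{m,k,a})=\binom{k-a}{r}+(m-k+a)\binom{a}{r-1}$, and use convexity in $a$ to restrict to the endpoints. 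Those ingredients are all correct, as is your observation that the naive per-block bound fails and that global $P_k$-freeness must enter through path-length budgets at cut vertices.

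That said, as written this is a roadmap rather than a proof, and the two places where you stop are exactly where all the work lies. (i) In the $2$-connected lemma, the sentence beginning ``a Bondy--Dirac type bound \dots forces the removed vertices to have degree at most the current threshold $a$ and the surviving core either to be $G_{\cdot,k,a}$-shaped or to have at most $k-a$ vertices'' states the desired conclusion of Kopylov's disintegration; the dichotomy between the two cores and the handling of small cores that are not complete is precisely what must be proved, now with the nonlinear weight $\binom{d(v)}{r-1}$ in place of $d(v)$. (ii) In the assembly step, ``uses the budget inequality at $x$ to show the enlarged configuration still does not beat $G_{n,k,t}$ or $G_{n,k,1}$'' is again a statement of the goal. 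Your proposed base case is also too quick: a connected $P_k$-free graph on at most $k$ vertices can have as many as $\binom{k}{r}$ copies of $K_r$ (e.g.\ $K_k$), which for large $r$ exceeds $\max\left(\N(K_r,G_{n,k,t}),\N(K_r,G_{n,k,1})\right)$; the recursion is rescued only by invoking that the original graph, being connected with $n>k$ vertices, contains no $K_k$, and this hypothesis has to be threaded through the induction. Since you explicitly defer ``the technical heart'' to the methods of \cite{kopylov1977maximal} and \cite{balister2008connected}, the proposal should be regarded as an accurate plan with genuine gaps at its two central steps, not as a complete proof.
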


\begin{theorem}[\cite{luo2017maximum}]
Let $n \ge k \ge 4$ and $G$ be a $n$-vertex graph with no cycle of length $k$ or greater, then
\begin{displaymath}
\N(K_r,G) \le \frac{n-1}{k-2}\binom{k-1}{r}.
\end{displaymath}
\end{theorem}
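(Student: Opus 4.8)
The plan is to imitate the proof of Theorem~\ref{luo} given above, using a local counting identity over vertices together with the classical Erd\H{o}s--Gallai bound for $C_{\ge k}$-free graphs in place of the one for $P_k$-free graphs. The point that makes this work is that in a $C_{\ge k}$-free graph neighborhoods are path-free: if $G[N(v)]$ contained a path on $k-1$ vertices (that is, with $k-2$ edges), then closing it up through $v$ would produce a cycle with $k$ edges, contradicting that $G$ is $C_{\ge k}$-free. Hence $G[N(v)]$ is $P_{k-2}$-free for every vertex $v$.

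First I would dispose of the degenerate ranges of $r$. If $r \ge k$, then a copy of $K_r$ would contain a cycle of length $r \ge k$, so $\N(K_r,G) = 0$, matching $\binom{k-1}{r} = 0$; if $r = 1$, the bound reads $n \le (n-1)(k-1)/(k-2)$, which holds whenever $n \ge k-1$. So assume $2 \le r \le k-1$. Now use
\begin{displaymath}
\N(K_r,G) = \frac{1}{r}\sum_{v\in V(G)}\N(K_{r-1},G[N(v)]),
\end{displaymath}
and bound each summand by applying Theorem~\ref{luo} to the $P_{k-2}$-free graph $G[N(v)]$, which gives $\N(K_{r-1},G[N(v)]) \le \frac{d(v)}{k-2}\binom{k-2}{r-1}$. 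Summing over $v$, using $\sum_v d(v) = 2e(G)$ and the Erd\H{o}s--Gallai bound $e(G) \le \frac{(k-1)(n-1)}{2}$ for $C_{\ge k}$-free graphs, and finally simplifying via the identity $\frac{k-1}{r}\binom{k-2}{r-1} = \binom{k-1}{r}$, yields exactly $\frac{n-1}{k-2}\binom{k-1}{r}$.

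There is no serious obstacle here: the whole argument is a bootstrap from the already-established path result (Theorem~\ref{luo}) and the classical cycle theorem, and the only real idea is the ``neighborhoods are $P_{k-2}$-free'' observation. The few things to be careful about are the edge cases in $r$ noted above, and the fact that Theorem~\ref{luo} is applied to $G[N(v)]$ regardless of how small $d(v)$ is --- which is legitimate since that statement carries no lower bound on the number of vertices.
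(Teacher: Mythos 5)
The paper does not actually prove this statement --- it is quoted from Luo's paper \cite{luo2017maximum} without proof --- so there is no in-paper argument to compare against. Your proposal is a correct, self-contained proof, and it is the natural cycle analogue of the paper's own proof of Theorem~\ref{luo}: the identity $\N(K_r,G)=\frac{1}{r}\sum_{v}\N(K_{r-1},G[N(v)])$, the observation that in a $C_{\ge k}$-free graph each $G[N(v)]$ is $P_{k-2}$-free (a path on $k-1$ vertices in $N(v)$ closes through $v$ into a $C_k$), the bound $\N(K_{r-1},G[N(v)])\le\frac{d(v)}{k-2}\binom{k-2}{r-1}$ from Theorem~\ref{luo}, and then $\sum_v d(v)=2e(G)\le (k-1)(n-1)$ from the Erd\H{o}s--Gallai cycle theorem, with $\frac{k-1}{r}\binom{k-2}{r-1}=\binom{k-1}{r}$ finishing the computation. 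Your handling of the degenerate cases $r\ge k$ and $r=1$ is also correct, and you are right that Theorem~\ref{luo} as stated imposes no lower bound on the number of vertices, so applying it to small neighborhoods is legitimate. I see no gap.
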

Some recent generalizations of the Erd\H{o}s--Gallai theorem and Luo's results can be found in \cite{newLuo}. In the present paper we focus on results where paths or all sufficiently long cycles are forbidden.  The general problem of enumerating cycles of a fixed length when a fixed cycle is forbidden has also been considered recently (see \cite{cycle1} and \cite{cycle2} which generalize earlier results for special cases, e.g., \cite{c3c5},  \cite{c3ckplus1}, \cite{alon2016many}). 

%We write $f(n,k,\ell) \sim g(n,k,\ell)$ for $\lim_{k \to \infty} \lim_{n \to \infty} f(n,k,\ell)/g(n,k,\ell) = 1$. (All results for which we use the $\sim$ notation involve parameters $n,k$ and $\ell$.)   The notation $O$ is always defined in terms of $n$ and the hidden constants are dependent on $k$ or $\ell$.

\cite{alon2016many} considered the problem of maximizing the number of copies of a tree $T$ in a graph which is $H$-free, for another tree $H$. Given two trees $T$ and $H$, they introduced an integer parameter $m(T,H)$ and proved that $\ex(n,T,H) = \Theta(n^{m(T,H)})$, thereby determining the correct order of magnitude for all pairs of trees.  A recent result  due to \cite{newtree} extends the above result of Alon and Shikhelman to the case when only $H$ is a tree and $T$ is arbitrary.  It is shown that, nonetheless, the order of magnitude of $\ex(n,T,H)$ is a positive integer power of $n$.

In the present paper, we are interested in the case where the forbidden tree is a path, and we find correct asymptotics and sometimes the exact bound for the maximum number of copies of a smaller path (as well as for several other types of graphs). We also obtain asymptotic results for the problem of maximizing copies of $T$ in a graph with no cycles of length at least $k$, in the case when $T$ is a path. 

The paper is organized as follows:  In Section \ref{asy}, we determine asymptotically the maximum number of paths and cycles in a $P_k$-free graph. For the case when $k$ is even we provide a simple proof using a result of Nikiforov on the spectral radius of $P_k$ free graphs.  Then, we give more precise estimates which are also sharp in case when $k$ is odd through double-counting arguments.  In Section~\ref{treefree}, we determine the order of magnitude of $\ex(n,H,T)$ when $T$ is a tree for the class of graphs $H$ which satisfy the condition that $v(H) - \alpha(H) \le \floor{\frac{k-1}{2}}$.  In Section \ref{exact}, we determine $\ex(n,H,P_k)$ exactly for several graphs $H$ including 4-cycles, stars and short paths. In Section~\ref{weird case}, we consider the problem of enumerating copies of $P_{k-1}$ in a $P_k$-free graph. We determine the asymptotic result for copies of $P_5$ in a $P_6$-free graph and pose a general conjecture.

\section{Asymptotic Results}
 We write $f(n,k) {\sim} g(n,k)$ when $\displaystyle
 \lim_{k \to \infty} \left(\lim_{n \to \infty} \frac{f(n,k)}{g(n,k)}\right) = 1$. %(All results for which we use the $\sim$ notation involve parameters $n,k$ and $\ell$.)  
 %The notation $O$ is always defined in terms of $n$ and the hidden  constants may depend on $k$ or $\ell$.
We estimate on the number of copies of paths and cycles in a $P_k$-free graph. For a fixed $\ell \in \mathbb{N}$, we prove the following asymptotic results:
\label{asy}
\begin{theorem}
\label{evenP}
\begin{displaymath}
\ex(n,P_{2\ell},P_k) \sim \frac{k^\ell n^{\ell+1}}{2^{\ell+1}}.
\end{displaymath} 
\end{theorem}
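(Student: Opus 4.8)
The plan is to prove matching upper and lower bounds. For the lower bound, I would exhibit an explicit $P_k$-free graph with roughly $\frac{k^\ell n^{\ell+1}}{2^{\ell+1}}$ copies of $P_{2\ell}$. The natural candidate is (a slight variant of) the extremal graph $G_{n,k,t}$ from Theorem~\ref{connEG}, or rather a disjoint union of cliques of size $k$ as in Theorem~\ref{eg}, whichever gives more $P_{2\ell}$ copies. A disjoint union of $n/k$ cliques $K_k$ contains $\Theta(n k^{2\ell})$ copies of $P_{2\ell}$, which is only linear in $n$, so that is far too few. Instead $G_{n,k,t}$ with $t = \floor{\frac{k-1}{2}}$ is the right object: a copy of $P_{2\ell}$ using $\ell$ vertices from the independent side $B$ and $\ell+1$ vertices from the clique side of size $t$ (alternating $B$-$A$-$B$-$\cdots$-$A$) can be counted directly. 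The number of ways to choose and order the $\ell$ vertices in $B$ is $\sim n^\ell$, the number of ways to choose the $\ell+1$ vertices from the side of size $t \approx k/2$ and interleave them is $\sim (k/2)^{\ell+1}/(\text{symmetry})$; after accounting for the factor $2$ from reversing the path and the symmetry factors, this should come out to $\sim \frac{k^\ell n^{\ell+1}}{2^{\ell+1}}$. I would verify this constant carefully since it is the crux of sharpness.

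For the upper bound, the cleanest route (as the excerpt hints) is the spectral approach for even $k$: Nikiforov's theorem bounds the spectral radius $\rho(G)$ of a $P_k$-free graph, and the number of copies of $P_{2\ell}$, which is a path on $2\ell+1$ vertices, can be bounded in terms of walks of length $2\ell$, hence in terms of $\sum_i \lambda_i^{2\ell} = \mathrm{tr}(A^{2\ell}) \le n \rho(G)^{2\ell}$. Since $\rho(G) \le (\text{roughly}) \sqrt{(k-1)n/2}\,$ for $P_k$-free $G$ (or the sharper $k/2 + \sqrt{\cdots}$ form), we get $\N(P_{2\ell}, G) \le \mathrm{tr}(A^{2\ell}) \sim n \cdot \rho^{2\ell} \sim n (kn/2)^{\ell} = \frac{k^\ell n^{\ell+1}}{2^\ell}$. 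This is off from the target by a factor of $2$, so I would need to be more careful: the number of \emph{paths} $P_{2\ell}$ is asymptotically half the number of closed-walk contributions, or I exploit that each path is counted with a multiplicity coming from its two orientations together with the fact that walks which are not paths contribute a lower-order term once $k$ is large (this is where the iterated $\lim_{k\to\infty}\lim_{n\to\infty}$ in the definition of $\sim$ is doing real work). Specifically, the dominant walks of length $2\ell$ from the trace are, up to lower-order terms in $n$, exactly the paths traversed in one of two directions, contributing the extra factor of $2$ that reconciles $\frac{k^\ell n^{\ell+1}}{2^\ell}$ with $\frac{k^\ell n^{\ell+1}}{2^{\ell+1}}$.

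I expect the main obstacle to be pinning down the constant on both sides simultaneously: the spectral bound for odd $k$ is weaker (the floor in $t$ matters), so the argument as sketched really only gives the even-$k$ case cleanly, and the excerpt itself says the odd case needs separate double-counting. For a self-contained argument valid for all $k$, I would instead prove the upper bound by a direct double-counting / induction on $\ell$: bound $\N(P_{2\ell},G)$ by summing over the ``middle edge'' or over a pendant vertex, reducing to $\N(P_{2\ell-1},G)$ or $\N(P_{2\ell-2},G)$ times a degree sum, and use the Erd\H{o}s--Gallai bound $e(G)\le (k-1)n/2$ at the base. The delicate point is controlling, at each step, the loss incurred by paths that "reuse" vertices (i.e. closed or self-intersecting walks), showing that these contribute $o(n^{\ell+1})$ or are absorbed into the $k\to\infty$ limit; managing this error term carefully, rather than the main term, is where the real work lies.
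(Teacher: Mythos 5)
Your high-level plan (the graph $G_{n,k,t}$ for the lower bound, Nikiforov's spectral bound for the upper bound) is exactly the paper's first proof of this theorem, but both halves contain concrete errors as written. In the lower bound you have the two sides of $G_{n,k,t}$ swapped: a $P_{2\ell}$ has $2\ell+1$ vertices, and to get $\Theta(n^{\ell+1})$ copies the path must start and end in the large independent set, using $\ell+1$ vertices there and only $\ell$ vertices of the clique of size $t\approx k/2$. Your count ($\ell$ vertices in $B$, $\ell+1$ in the clique side, ``alternating $B$--$A$--$\cdots$--$A$,'' which is not even a consistent alternation on an odd number of vertices) gives roughly $n^{\ell}(k/2)^{\ell+1}/2$, which is the wrong power of $n$ and does not match the constant you then assert. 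The correct count is $\tfrac12 n^{\ell+1}t^{\ell}(1+o(1))\sim k^{\ell}n^{\ell+1}/2^{\ell+1}$.

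In the upper bound, $\N(P_{2\ell},G)$ is not controlled by $\mathrm{tr}(A^{2\ell})$: the trace counts \emph{closed} walks of length $2\ell$, and an open path of length $2\ell$ is not one. The quantity you need is the total number of walks of length $2\ell$, namely $\mathbf{1}^{t}A^{2\ell}\mathbf{1}$, which the paper bounds by $n\rho^{2\ell}$ via the Rayleigh quotient. More importantly, your factor-of-$2$ repair points the wrong way: you do not need the hard (and unjustified for arbitrary $P_k$-free $G$) claim that non-path walks are negligible; you only need that every copy of $P_{2\ell}$ yields two distinct walks via its two orientations, so $2\N(P_{2\ell},G)\le\mathbf{1}^{t}A^{2\ell}\mathbf{1}\le n\left(\lfloor (k+1)/2\rfloor n\right)^{\ell}$, which already gives the stated constant after the iterated limit. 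Note also that the parity issue you raise concerns the parity of the \emph{counted} path, not of $k$: the spectral argument works here for all $k$ (the floor washes out as $k\to\infty$) but fails for $P_{2\ell+1}$, where it loses a factor of $\sqrt{n}$. The paper's sharper upper bound is a separate double-counting argument with matching structures $(e_1,\dots,e_\ell,v)$, showing each structure outside a negligible family (those spanning a triangle plus an $(\ell-1)$-matching) aligns with at most one $P_{2\ell}$ while each $P_{2\ell}$ aligns with exactly two; your sketched induction on $\ell$ is not a substitute for this without the error analysis you explicitly defer.
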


\begin{theorem}
\label{oddP}
\begin{displaymath}
\ex(n,P_{2\ell+1},P_k) \sim \frac{(\ell+2)k^{\ell+1} n^{\ell+1}}{2^{\ell+2}}.
\end{displaymath} 
\end{theorem}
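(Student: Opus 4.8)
The plan is to prove matching upper and lower bounds; the lower bound is a routine construction, while the upper bound is the substantive part.

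\emph{Lower bound.} Take $G=G_{n,k,t}$ with $t=\floor{\frac{k-1}{2}}$, and recall that $G$ contains a copy of $K_{t,n-t}$ together with all $\binom{t}{2}$ edges inside the part of size $t$. Hence every sequence $v_0,v_1,\dots,v_{2\ell+1}$ of distinct vertices in which no two consecutive vertices both lie in the part $B$ of size $n-t$ spans a copy of $P_{2\ell+1}$. To capture the term of order $n^{\ell+1}$ I take exactly $\ell+1$ of the $v_i$ in $B$: their positions then form a maximum independent set of the path on the $2\ell+2$ positions $0,1,\dots,2\ell+1$, and there are exactly $\ell+2$ such sets. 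For each such position pattern there are $(n-t)(n-t-1)\cdots(n-t-\ell)\cdot t(t-1)\cdots(t-\ell)$ ordered sequences, and each copy of $P_{2\ell+1}$ having $\ell+1$ of its vertices in $B$ arises from exactly two of them, so
\[
\ex(n,P_{2\ell+1},P_k)\ \ge\ \N(P_{2\ell+1},G_{n,k,t})\ \ge\ \tfrac12(\ell+2)\,(n-t)(n-t-1)\cdots(n-t-\ell)\cdot t(t-1)\cdots(t-\ell).
\]
Letting $n\to\infty$ and then $k\to\infty$, and using $t=\floor{\frac{k-1}{2}}\sim k/2$, this is $(1-o(1))\frac{(\ell+2)k^{\ell+1}n^{\ell+1}}{2^{\ell+2}}$.

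\emph{Upper bound --- reduction.} Let $G$ be a $P_k$-free graph on $n$ vertices and label the vertices of a copy of $P_{2\ell+1}$ as $x_0,x_1,\dots,x_{2\ell+1}$ along the path. The even-position vertices $s_0=x_0,\ s_1=x_2,\ \dots,\ s_\ell=x_{2\ell}$ are $\ell+1$ distinct vertices, and each odd-position vertex is confined to a prescribed set of common neighbours: $x_{2i-1}\in N(s_{i-1})\cap N(s_i)$ for $1\le i\le\ell$, and $x_{2\ell+1}\in N(s_\ell)$. Discarding the disjointness requirements on the $x_{2i-1}$ and summing over the two orientations of each copy gives
\[
\N(P_{2\ell+1},G)\ \le\ \frac12\sum_{(s_0,\dots,s_\ell)}\Bigl(\ \prod_{i=1}^{\ell}\abs{N(s_{i-1})\cap N(s_i)}\ \Bigr)\deg(s_\ell),
\]
the sum ranging over ordered tuples of distinct vertices; in matrix form this reads $\N(P_{2\ell+1},G)\le\frac12\,\mathbf{1}^{\top}D^{\ell}\mathbf{d}$, where $D$ is the codegree matrix and $\mathbf{d}$ the degree vector. (Alternatively, splitting at the central edge $x_\ell x_{\ell+1}$ gives $\N(P_{2\ell+1},G)\le\sum_{uv\in E(G)}p_\ell(u)p_\ell(v)$, where $p_\ell(w)$ counts copies of $P_\ell$ in $G$ with $w$ as an endpoint.)

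\emph{Upper bound --- estimating the codegree sum, and the main obstacle.} It remains to show that the sum above is at most $(1+o(1))\,\frac12(\ell+2)\,t^{\ell+1}n^{\ell+1}$; one checks that $G_{n,k,t}$ attains this, so the estimate must be pushed all the way to the sharp constant. This is the heart of the proof. The naive bounds $\abs{N(a)\cap N(b)}\le n$ and $\sum_{w\in N(v)}\deg(w)\le 2e(G)\le(k-1)n$ (Erd\H{o}s--Gallai) telescope only to $((k-1)n)^{\ell+1}$, which misses the target by the constant factor $2^{\ell+1}/(\ell+2)$, so one must use the structure of $P_k$-free graphs, not just their edge count. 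The intended picture is: $\abs{N(a)\cap N(b)}$ is of order $n$ only when both $a,b$ are ``dense'' vertices, and likewise $\deg(s_\ell)$; expanding the sum according to which $s_i$ are dense, the only patterns of order $n^{\ell+1}$ should be the $\ell+2$ ones in which an initial block of the $s_i$ is sparse and the complementary terminal block is dense (dual to the $\ell+2$ maximum independent sets from the lower bound), each contributing $(1+o(1))t^{\ell+1}n^{\ell+1}$, with all remaining patterns $O(n^{\ell})$. The difficulty --- and what I expect to be the main obstacle --- is that a crude ``few dense vertices'' split does \emph{not} work: a $P_k$-free graph can have arbitrarily many vertices of large degree (e.g.\ a disjoint union of stars), and the required inequality still holds only for subtler reasons. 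So the argument must proceed more carefully, for instance by an induction on $\ell$ in which one bounds $\sum_{u}\bigl(\sum_{w\in N(u)}\deg(w)\bigr)\cdot[D^{\ell-1}\mathbf d]_u$ by re-applying Erd\H{o}s--Gallai with attention to where the two-step neighbourhood mass sits, and choosing the degree threshold so that the sparse-side codegree/degree bounds hold with the right constant and all lower-order errors (including the $O(1)$ parity correction when $k$ is even) are absorbed; the $o(1)$ slack from letting $k\to\infty$ in the definition of $\sim$ is exactly what makes this threshold optimisation succeed. (For even paths $P_{2\ell}$ a short spectral argument via Nikiforov's bound works, but it fails for $P_{2\ell+1}$ because near-bipartiteness of the extremal graph makes $\mathbf 1^{\top}A^{2\ell+1}\mathbf 1\le\lambda_1^{2\ell+1}n$ lossy by a factor of order $\sqrt n$, so the double counting above seems unavoidable.)
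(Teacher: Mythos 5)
Your lower bound is correct and matches the paper's construction. The upper bound, however, is not a proof: after reducing to the codegree sum $\frac12\sum_{(s_0,\dots,s_\ell)}\bigl(\prod_{i=1}^{\ell}\abs{N(s_{i-1})\cap N(s_i)}\bigr)\deg(s_\ell)$, you state that it remains to show this is at most $(1+o(1))\frac{\ell+2}{2}t^{\ell+1}n^{\ell+1}$, correctly identify this as the heart of the matter and name the obstruction (a $P_k$-free graph can have linearly many high-degree vertices, so no crude dense/sparse dichotomy applies), and then offer only a sketch of a possible induction ``for instance by \dots choosing the degree threshold so that \dots''. No such estimate is actually established, and the $\ell+2$ patterns you describe are verified only on the extremal graph $G_{n,k,t}$, not for an arbitrary $P_k$-free graph. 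As it stands the argument proves nothing beyond the lossy bound $((k-1)n)^{\ell+1}/2$ that you yourself dismiss, so there is a genuine gap exactly where you flagged it.

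It is worth noting how the paper gets around this, because it avoids the dense/sparse analysis entirely. Instead of summing codegrees over tuples of vertices, it counts ordered $(\ell+1)$-tuples of disjoint edges (``matching structures''); Erd\H{o}s--Gallai immediately bounds their number by $\binom{kn/2}{\ell+1}(\ell+1)!\sim k^{\ell+1}n^{\ell+1}/2^{\ell+1}$, which is where the main factor comes from. The constant $\ell+2$ then arises from a purely local lemma: after discarding the $O(n^{\ell})$ matching structures whose vertex set contains a $K_4$ plus a matching or two triangles plus a matching (negligible by Luo's clique bound), one can label each edge $e_j=\{a_j,b_j\}$ so that $a_ja_{j+1}$ is never an edge, and a short case analysis shows at most $\ell+2$ copies of $P_{2\ell+1}$ can thread such a labelled matching. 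Each path aligns with exactly two matching structures, giving the factor $\frac12$. If you want to salvage your approach, you would need to supply the missing global estimate on $\mathbf{1}^{\top}D^{\ell}\mathbf{d}$; switching to the paper's edge-based double count is likely the easier repair, since it converts the hard ``where does the codegree mass sit'' question into a finite structural statement about $2(\ell+1)$ vertices.
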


\begin{theorem}
\label{evenC}
\begin{displaymath}
\ex(n,C_{2\ell},P_k) \sim  \frac{k^\ell n^\ell}{\ell 2^{\ell+1}}.
\end{displaymath}
\end{theorem}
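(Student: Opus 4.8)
The plan is to prove matching asymptotic upper and lower bounds: the upper bound by a spectral closed‑walk count (in the same spirit as the proof of Theorem~\ref{evenP}), and the lower bound by exhibiting $C_{2\ell}$‑copies in the extremal graph $G_{n,k,t}$.

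\emph{Upper bound.} Let $G$ be an $n$‑vertex $P_k$‑free graph, let $A$ be its adjacency matrix, and let $\lambda_1\ge\cdots\ge\lambda_n$ be the eigenvalues of $A$, so that $\lambda_1\ge|\lambda_i|$ for every $i$. A closed walk of length $2\ell$ in $G$ that visits $2\ell$ distinct vertices traces out a copy of $C_{2\ell}$, and conversely each copy of $C_{2\ell}$ is traced by exactly $4\ell$ such walks (a choice of starting vertex and of orientation). Since the total number of closed walks of length $2\ell$ in $G$ is $\operatorname{tr}(A^{2\ell})$, this gives
\[
4\ell\,\N(C_{2\ell},G)\ \le\ \operatorname{tr}(A^{2\ell})\ =\ \sum_{i=1}^n\lambda_i^{2\ell}\ \le\ \lambda_1^{2\ell-2}\sum_{i=1}^n\lambda_i^{2}\ =\ \lambda_1^{2\ell-2}\operatorname{tr}(A^2)\ =\ 2\,\lambda_1^{2\ell-2}\,e(G).
\]
Now I would use the two classical bounds for $P_k$‑free graphs: the Erd\H{o}s--Gallai theorem (Theorem~\ref{eg}), which gives $2e(G)\le(k-1)n$, and Nikiforov's spectral‑radius bound, which gives $\lambda_1(G)\le(1+o(1))\sqrt{t\,n}$ as $n\to\infty$ with $t=\floor{\frac{k-1}{2}}$. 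Substituting,
\[
\N(C_{2\ell},G)\ \le\ (1+o(1))\,\frac{(k-1)\,(tn)^{\ell-1}\,n}{4\ell}\ =\ (1+o(1))\,\frac{(k-1)\,t^{\ell-1}}{4\ell}\,n^\ell ,
\]
and since $(k-1)\,t^{\ell-1}\sim k\,(k/2)^{\ell-1}=k^\ell/2^{\ell-1}$ as $k\to\infty$, the coefficient tends to $\frac{k^\ell}{2^{\ell+1}\ell}$.

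\emph{Lower bound.} Take $G=G_{n,k,t}$ with $t=\floor{\frac{k-1}{2}}$, which is $P_k$‑free by Theorem~\ref{connEG}. Grouping the classes $B$ and $C$, the graph $G_{n,k,t}$ contains the complete bipartite graph between the clique‑class of size $t$ and a class of size $n-t$ that induces no edge relevant to alternating cycles. Any $2\ell$‑cycle alternating between $\ell$ chosen vertices of the clique‑class and $\ell$ chosen vertices of the other class is a copy of $C_{2\ell}$, and the number of these is $\binom{t}{\ell}\binom{n-t}{\ell}$ times the number of Hamilton cycles of $K_{\ell,\ell}$, namely $\tfrac12\ell!(\ell-1)!$. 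Hence $\ex(n,C_{2\ell},P_k)\ge(1+o(1))\frac{t^\ell n^\ell}{2\ell}$ as $n\to\infty$, which is $\sim\frac{k^\ell n^\ell}{2^{\ell+1}\ell}$ since $t\sim k/2$. Combined with the upper bound (and taking the limit in $n$ before the limit in $k$, as in the definition of $\sim$), this yields the theorem.

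\emph{Where the work is.} The argument is short once the spectral input is in hand, so the delicate point is only that the two inequalities $\sum_i\lambda_i^{2\ell}\le\lambda_1^{2\ell-2}\sum_i\lambda_i^2$ and $e(G)\le(k-1)n/2$ must \emph{both} be asymptotically lossless for the bound to come out sharp; this holds because for $G_{n,k,t}$ one has $e\sim tn$, $\lambda_1\sim\sqrt{tn}$ and $\lambda_n\sim-\sqrt{tn}$, so no slack is lost. One must also cite Nikiforov's theorem with its correct leading constant and check that the parity of $k$ and the small‑$n$ regime are irrelevant to the asymptotics. If one prefers to avoid the spectral black box, the same upper bound can be approached by double counting, writing $2\ell\,\N(C_{2\ell},G)=\sum_{Q}\lvert N(x)\cap N(y)\setminus V(Q)\rvert$ over copies $Q$ of $P_{2\ell-2}$ with endpoints $x,y$, and then bounding the common‑neighbourhood and the path factors after separating off the (boundedly many) vertices of large degree; this is more laborious, and the spectral route is the one I would actually carry out.
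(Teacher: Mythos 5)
Your proof is correct for the theorem as stated, but your upper bound travels a genuinely different road from the paper's. The paper proves the stronger Proposition~\ref{p111} ($\ex(n,C_{2\ell},P_k)\le \frac{k^\ell n^\ell}{\ell 2^{\ell+1}}+O(n^{\ell-1})$ for every fixed $k$) by an induction on $n$: it takes a vertex $v$ of minimum degree (which is at most $t$ unless Dirac's argument confines every component to $k$ vertices), decomposes each $C_{2\ell}$ through $v$ into the cherry at $v$ plus a $P_{2(\ell-2)}$, and invokes the already-proved path bound of Theorem~\ref{evenP} to control the difference $\ex(n+1,\cdot)-\ex(n,\cdot)$, which then telescopes. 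You instead bound $4\ell\,\N(C_{2\ell},G)\le\operatorname{tr}(A^{2\ell})\le\lambda_1^{2\ell-2}\cdot 2e(G)$ and feed in Nikiforov plus Erd\H{o}s--Gallai; the steps are all valid ($\lambda_1\ge|\lambda_i|$ by Perron--Frobenius, each $C_{2\ell}$ is traced by exactly $4\ell$ closed walks), and the resulting coefficient $\lceil k/2\rceil^{\ell-1}(k-1)/(4\ell)$ does tend to $k^\ell/(2^{\ell+1}\ell)$ in the double limit $n\to\infty$ then $k\to\infty$, which is all the theorem's $\sim$ requires. It is a nice observation that the spectral method, which the paper notes is too lossy for odd paths, is lossless here because $\operatorname{tr}(A^2)=2e(G)$ absorbs the extra factor exactly. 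What you give up relative to the paper: for fixed odd $k$ and $\ell\ge 3$ your constant is slightly larger than the sharp $k^\ell/(2^{\ell+1}\ell)$ of Proposition~\ref{p111} (since $\lceil k/2\rceil>k/2$), and the method does not extend to $C_{2\ell+1}$ (odd traces lose a $\sqrt{n}$), whereas the paper's vertex-removal template handles both parities uniformly. Two small citation slips to fix: the paper's Corollary~\ref{Nikcor} gives $\lambda_1\le\sqrt{\lfloor(k+1)/2\rfloor n}$, not $\sqrt{tn}$ with $t=\lfloor(k-1)/2\rfloor$ (harmless asymptotically, as you anticipated), and the $P_k$-freeness of $G_{n,k,t}$ is an elementary observation rather than a consequence of Theorem~\ref{connEG}. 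Your lower-bound count in $G_{n,k,t}$, namely $\binom{t}{\ell}\binom{n-t}{\ell}\cdot\tfrac12\ell!(\ell-1)!$, agrees with the paper's (unspelled-out) construction.
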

\begin{theorem}
\label{oddC}
\begin{displaymath}
\ex(n,C_{2\ell+1},P_k) \sim \frac{k^{\ell+1} n^\ell}{2^{\ell+2}}. 
\end{displaymath}
\end{theorem}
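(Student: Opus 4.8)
\emph{Overview.} The plan is to prove matching asymptotic bounds: a lower bound from the Kopyl\-ov--Balister--Győri graph $G_{n,k,t}$, and an upper bound obtained by first reducing to $2$-connected graphs via the block decomposition and then estimating the number of copies of $C_{2\ell+1}$ inside a $2$-connected $P_k$-free graph.

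\emph{Lower bound.} I would take the graph $G_{n,k,t}$ with $t=\lfloor (k-1)/2\rfloor$, which is $P_k$-free (it is the extremal graph of Theorem~\ref{connEG}), and count its copies of $C_{2\ell+1}$ by how many of their vertices lie in the almost-independent part $B$. Since $B$ spans at most one edge and no two vertices of $B$ can be consecutive on a cycle, every copy of $C_{2\ell+1}$ uses at most $\ell$ vertices of $B$. The copies using exactly $\ell$ vertices of $B$ and $\ell+1$ vertices of the clique $A$ dominate: there are $\binom{|B|}{\ell}\binom{|A|}{\ell+1}$ choices of vertex set, and $\frac{(\ell+1)!\,\ell!}{2}$ ways to arrange the chosen vertices cyclically with no two $B$-vertices adjacent (arrange the $\ell+1$ vertices of $A$ on a cycle, creating $\ell+1$ gaps, then insert each $B$-vertex into a gap of its own). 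As $|A|=t\sim k/2$ and $|B|\sim n$, this contributes $(1+o(1))\frac{n^\ell t^{\ell+1}}{2}\sim\frac{k^{\ell+1}n^\ell}{2^{\ell+2}}$, whereas all other configurations (fewer $B$-vertices, or use of the exceptional edge of $B$) contribute only $O(n^{\ell-1})$. Hence $\ex(n,C_{2\ell+1},P_k)\ge (1-o(1))\frac{k^{\ell+1}n^\ell}{2^{\ell+2}}$.

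\emph{Upper bound.} Let $G$ be a $P_k$-free graph on $n$ vertices. Every copy of $C_{2\ell+1}$ lies inside a single block of $G$; the blocks $B_1,\dots,B_r$ of size at least $2\ell+1$ satisfy $\sum_i(|V(B_i)|-1)\le n-1$, and since $m\mapsto m^\ell$ is convex one checks that $\sum_i |V(B_i)|^\ell$ is maximised when there is a single block on all $n$ vertices. So it suffices to prove $\N(C_{2\ell+1},B)\le \frac{k^{\ell+1}}{2^{\ell+2}}(1+\varepsilon_k)\,m^\ell$ for every $2$-connected $P_k$-free $B$ on $m$ vertices, with $\varepsilon_k\to 0$; then summing over blocks and letting $n\to\infty$ and then $k\to\infty$ gives the bound. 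For the $2$-connected case I would write each $C_{2\ell+1}$ as an edge together with a path of length $2\ell$ joining its endpoints, equivalently as a copy of $P_{2\ell-1}$ together with a common neighbour of its two endpoints, so that
\[
(2\ell+1)\,\N(C_{2\ell+1},B)\ \le\ \sum_{Q}\bigl|N(a_Q)\cap N(b_Q)\bigr|,
\]
the sum running over copies $Q$ of $P_{2\ell-1}$ with endpoints $a_Q,b_Q$. One then bounds $\sum_Q 1=\N(P_{2\ell-1},B)$ using Theorem~\ref{oddP} with parameter $\ell-1$, controls the ``clique part'' of $B$ with Theorem~\ref{eg} and Luo's bound $\ex(n,K_r,P_k)\le c_{k,r}n$, and accounts for the common neighbourhoods by separating endpoints of large degree (those in the core of the structure behind Theorem~\ref{connEG}) from those of small degree. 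As indicated in the paper, for odd cycles one cannot simply invoke Nikiforov's spectral bound, since for $G_{n,k,t}$ the trace $\mathrm{tr}(A^{2\ell+1})$ is governed by an almost exact cancellation between the two extreme eigenvalues, so a double-counting of the above type is needed instead.

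\emph{Main obstacle.} The difficulty is entirely in the $2$-connected upper bound with the \emph{correct constant}: the extremal count comes from a delicate mixture of many copies of $P_{2\ell-1}$ whose endpoints have small common neighbourhood together with comparatively few whose endpoints have common neighbourhood of order $n$, so the double-counting must be arranged to weigh these two contributions exactly, and the resulting error terms can only be absorbed after passing to the iterated limit $n\to\infty$, then $k\to\infty$.
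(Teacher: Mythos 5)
Your lower bound is correct and is the same construction the paper uses ($G_{n,k,t}$); your count of the dominant cycles with $\ell$ vertices in the independent part and $\ell+1$ in the clique is right. The upper bound, however, is where all of the content of the theorem lives, and there your proposal is an outline with the decisive step missing. The inequality $(2\ell+1)\,\N(C_{2\ell+1},B)\le\sum_Q|N(a_Q)\cap N(b_Q)|$ is valid, but it does not yield the constant $\tfrac{k^{\ell+1}}{2^{\ell+2}}$ by combining $\N(P_{2\ell-1},B)$ with any uniform bound on the common neighbourhoods. Already for $\ell=1$ in $G_{n,k,t}$ the sum splits as roughly $tn\cdot t$ (the $\approx tn$ clique--independent edges, each with common neighbourhood of size $\approx t$) plus $\binom{t}{2}\cdot n$ (the few clique edges, each with common neighbourhood of size $\approx n$), and the two contributions are of the same order; the same happens for every $\ell$. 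So to extract the right constant you must separately bound the number of copies of $P_{2\ell-1}$ with \emph{both} endpoints in the high-degree core and show it is $O(k^2 n^{\ell-1})$ with the correct leading coefficient. You name this as the main obstacle but give no argument for it, and it is not a routine consequence of Theorem~\ref{eg}, Theorem~\ref{oddP} or Luo's bound: a priori a $P_k$-free graph need not have a ``core'' of $t$ vertices at all, so the degree-splitting you invoke has no structure theorem behind it at the level of generality you are working in. The block-decomposition reduction is fine but does not help with this.

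The paper avoids the problem entirely by proving a bound on the \emph{increment} $\ex(n+1,C_{2\ell+1},P_k)-\ex(n,C_{2\ell+1},P_k)$ (Claim~\ref{diffclaim2}, proved exactly as Claim~\ref{diffclaim1}) and telescoping. One takes an extremal $G$ on $n+1$ vertices; if $\delta(G)>t$ then Dirac's argument \citep{dirac1952some} confines every component to at most $k$ vertices and the count is $O(n)$, and otherwise a minimum-degree vertex $v$ has $d(v)\le t$, so the cycles through $v$ are counted by choosing the two cycle-neighbours of $v$ in at most $\binom{t}{2}$ ways and gluing (in at most two ways) a copy of $P_{2(\ell-2)+1}$ on the remaining $2\ell-2$ vertices, whose number is controlled by the already-proved Theorem~\ref{oddP}. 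The factor $2\binom{t}{2}\approx k^2/4$ supplied by the minimum-degree vertex, times $\tfrac{\ell k^{\ell-1}n^{\ell-1}}{2^{\ell}}$ from the path bound, gives precisely $\tfrac{\ell k^{\ell+1}n^{\ell-1}}{2^{\ell+2}}$, which telescopes to the claimed $\tfrac{k^{\ell+1}n^{\ell}}{2^{\ell+2}}$. If you want to salvage your route, you would need to supply the missing structural control on core-to-core paths; as written, the proof is not complete.
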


The construction showing the lower bounds for Theorems \ref{evenP}  through \ref{oddC} is the same as the extremal construction for the connected version of the Erd\H{o}s--Gallai theorem, Theorem \ref{connEG}.  Because we are interested in asymptotics, we will omit one edge from this construction which only occurs when $k$ is even.  Our $n$-vertex graph $G$ is defined by taking a clique on a set $S$ of $\floor{\frac{k-1}{2}}$ vertices and connecting every vertex in $S$ to every vertex of an independent set $U$, defined on $n-\floor{\frac{k-1}{2}}$ vertices.  It is easy to see that this graph is $P_k$-free.  In enumerating the copies of $P_{2\ell}$, the only paths which contribute asymptotically alternate between $S$ and $U$, starting and ending with $U$ (the factor of 2 comes from counting the path in both directions).   

When enumerating the copies of $P_{2\ell+1}$,  we have two kinds of paths which contribute asymptotically: those that start and end in $U$, using an edge in $S$ at some step, and those that start in $U$ and end in $S$, never using an edge contained in $S$.  For the first type, we condition on which step in the path we use the edge in $S$ ($\ell$ possibilities).  Each such path gets counted twice, hence we divide by two.  For the second type, each path is counted once and so we do not have to divide by 2.    

We begin by showing how Theorem~\ref{evenP} can be derived from a result about the spectral radius of $P_k$-free graphs due to \cite{nikiforov2010spectral}. 
Recall that the spectral radius of a graph $G$ is the maximum of the eigenvalues of the adjacency matrix of $G$. 
He determined, for sufficiently large $n$, the maximal spectral radius of a $P_k$-free graph on $n$ vertices. We are interested in asymptotics so we will make use of the following corollary which follows directly from the results in \cite{nikiforov2010spectral}.

\begin{corollary}[\cite{nikiforov2010spectral}] \label{Nikcor}
If $n$ is sufficiently large and $G$ is a $P_k$-free graph, then the spectral radius of $G$ is at most $\sqrt{\lfloor (k+1)/2 \rfloor n}$.
\end{corollary}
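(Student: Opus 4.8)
The plan is to deduce the bound from Nikiforov's determination of the $P_k$-free graph of largest spectral radius. First I would record what is needed from \cite{nikiforov2010spectral}: for $n$ sufficiently large, every $P_k$-free graph on $n$ vertices has spectral radius at most $\mu(G_{n,k,t})$, the spectral radius of the Kopylov extremal graph $G_{n,k,t}$ of Theorem~\ref{connEG} (for large $n$ this graph dominates the other extremal candidate $G_{n,k,1}$, just as it does for the number of edges); here $\mu(\cdot)$ denotes spectral radius. Since $\mu$ does not increase under passing to a subgraph, and since a disconnected $P_k$-free graph can be treated componentwise — invoking the trivial bound $\mu(G')\le v(G')-1$ for any component $G'$ too small to meet Nikiforov's threshold — the whole statement reduces to the single inequality
\begin{displaymath}
\mu(G_{n,k,t}) \le \sqrt{\floor{(k+1)/2}\, n}
\end{displaymath}
for all sufficiently large $n$.

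To estimate $\mu(G_{n,k,t})$ I would use the equitable partition of $V(G_{n,k,t})$ into the classes $A,B,C$. As $G_{n,k,t}$ is connected, its Perron eigenvector is constant on each class, so $\mu(G_{n,k,t})$ equals the Perron eigenvalue of the (nonnegative) quotient matrix of this partition. When $k$ is odd, $|C|=1$ and every vertex of the independent set $B\cup C$ has exactly $t$ neighbours in $A$; hence $G_{n,k,t}=K_t\vee\overline{K}_{n-t}$, the quotient matrix is $\left(\begin{smallmatrix}t-1 & n-t\\ t & 0\end{smallmatrix}\right)$, and $\mu(G_{n,k,t})$ is the larger root of $x^2-(t-1)x-t(n-t)$, which is at most $(t-1)+\sqrt{tn}$. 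When $k$ is even the partition yields a $3\times 3$ quotient matrix, and a short computation of its characteristic polynomial gives $\mu(G_{n,k,t})=(1+o(1))\sqrt{tn}$.

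Finally I would compare with $\sqrt{\floor{(k+1)/2}\,n}$. For even $k$ we have $t=\tfrac{k-2}{2}<\tfrac k2=\floor{(k+1)/2}$, so the quantity under the root is strictly smaller and the lower-order terms are absorbed once $n$ is large. For odd $k$ the two are closer, $t=\tfrac{k-1}{2}=\floor{(k+1)/2}-1$, and it suffices to check $(t-1)+\sqrt{tn}\le\sqrt{(t+1)n}$; this follows from $\sqrt{(t+1)n}-\sqrt{tn}\ge\tfrac{\sqrt n}{2\sqrt{t+1}}$, which exceeds $t-1$ once $n$ is large in terms of $k$. The step I would be most careful with is the invocation of Nikiforov's theorem itself: one must translate between path-length conventions (his $P_k$ has $k$ vertices, whereas here $P_k$ has $k$ edges), and one must take seriously that the "$n$ sufficiently large" hypothesis is essential — for odd $k$ the displayed bound is not tight but carries an additive error of order $k$, so it can fail for small $n$.
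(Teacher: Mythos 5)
Your argument is sound, but note that the paper itself offers no proof of Corollary~\ref{Nikcor}: it simply asserts that the bound ``follows directly from the results in \cite{nikiforov2010spectral}.'' What you have written is the derivation the paper leaves implicit, and it checks out. Nikiforov's theorem (for $n$ large) identifies the spectral-extremal $P_k$-free graph as $K_t\vee\overline{K}_{n-t}$ for odd $k$ and that graph plus one edge for even $k$, i.e.\ exactly $G_{n,k,t}$, so your reduction to bounding $\mu(G_{n,k,t})$ is the right one (and your caution about the vertex-count versus edge-count convention for $P_k$ is exactly the point where one could slip). The equitable-partition computation is correct: the quotient matrix $\bigl(\begin{smallmatrix}t-1 & n-t\\ t & 0\end{smallmatrix}\bigr)$ gives $\mu\le (t-1)+\sqrt{t(n-t)}$, and since $t=\lfloor(k+1)/2\rfloor-1$ in both parities, the comparison $(t-1)+\sqrt{tn}\le\sqrt{(t+1)n}$ for $n\gtrsim k^3$ finishes the odd case; this really does require $n$ large, confirming that the hypothesis in the corollary is not decorative. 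Two small simplifications you could make: the even case needs no $3\times3$ characteristic polynomial, since Weyl's inequality gives $\mu(G_{n,k,t})\le\mu(K_t\vee\overline{K}_{n-t})+1$; and the componentwise reduction is unnecessary because Nikiforov's theorem does not assume connectivity (though your fallback bound $\mu(G')\le v(G')-1$ for small components is harmless and correct).
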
  
	
\begin{proof}[of Theorem \ref{evenP} using spectral bounds]
Let $G$ be a $P_k$-free graph on $n$ vertices (for $n$ large enough to satisfy Corollary \ref{Nikcor}).  Let $A$ be the adjacency matrix of $G$, then we have
\begin{displaymath}
2\cdot \frac{\N(P_{2\ell},G)}{n} \leq  \frac{\#2\ell\mbox{-walks in 
}G}{n} = \frac{\mathbf{1}^t A^{2\ell} \mathbf{1}}{\mathbf{1}^t \mathbf{1}} \leq \left(\sqrt{\lfloor (k+1)/2 \rfloor n}\right)^{2\ell} = (\lfloor (k+1)/2 \rfloor n)^\ell.
\end{displaymath}
Where $\mathbf{1}$ is the all $1$'s vector, and the second inequality comes from the fact that the spectral radius of a Hermitian matrix $M$ is the supremum of the quotient $\frac{x^*Mx}{x^*x}$, where $x$ ranges over $\mathbb{C}^n \backslash \{0\}$. 
Therefore, for every $k\in \mathbb{N}$ and $n$ sufficiently large we have  $\ex(n,P_{2\ell},k) \leq n^{\ell+1}\lfloor{(k+1)/2} \rfloor^\ell/2$. 
\end{proof}

Unfortunately, it does not seem like this approach can be used to prove Theorem \ref{oddP} as the bound it would yield is off by a factor of order $\sqrt{n}$.

We will now prove the upper bounds from which Theorems \ref{evenP} and \ref{oddP} are immediate consequences. We note that the upper bound we obtain for the $P_{2\ell}$-case is sharper than the proof using the spectral radius yields.

\begin{proposition}
Let $\ell,k$ be positive integers with $2\ell < k$, then
\begin{displaymath}
\ex(n,P_{2\ell},P_k) \leq \frac{k^\ell n^{\ell+1}}{2^{\ell+1}} + O(n^\ell).
\end{displaymath}

\end{proposition}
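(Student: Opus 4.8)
The plan is to bound $\N(P_{2\ell},G)$ by counting paths according to the edge on which they begin. Each copy of $P_{2\ell}$, once oriented, is determined by choosing a starting edge $e=v_0v_1$ and then extending $2\ell-1$ further times. So I would write $2\N(P_{2\ell},G) = \sum_{v_0v_1 \in E(G)} (\text{number of directed } P_{2\ell} \text{ starting } v_0, v_1)$, where the factor $2$ accounts for the two orientations of each path. The point of starting with an edge rather than a vertex is that we can exploit the Erd\H{o}s--Gallai bound $e(G) \le (k-1)n/2$: the number of edges is only linear in $n$, so if we can bound the number of extensions of a fixed directed edge to a directed $P_{2\ell}$ by roughly $(k^{\ell-1}/2^{\ell-1}) n^{\ell-1}$, multiplying by $e(G) \le (k-1)n/2$ and dividing by $2$ yields the claimed $k^\ell n^{\ell+1}/2^{\ell+1} + O(n^\ell)$.

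The heart of the argument, then, is the following claim: for a fixed directed edge $v_0v_1$, the number of ways to extend to a directed path $v_0 v_1 v_2 \cdots v_{2\ell}$ is at most $\frac{k^{\ell-1}n^{\ell-1}}{2^{\ell-1}} + O(n^{\ell-2})$. I would prove this by induction on $\ell$. The natural inductive step: given the directed edge, the subpath $v_1 v_2 \cdots v_{2\ell}$ is a directed $P_{2\ell-1}$ starting at the edge $v_1 v_2$, and we want to count these. Pairing up steps, a directed $P_{2\ell-1}$ from $v_1$ decomposes as a choice of $v_2$ (at most $n$ choices, but really we should be more careful) together with a directed $P_{2\ell-3}$ from $v_2$ avoiding $\{v_0,v_1\}$; alternatively one sets up a two-step recursion $f_\ell \le (\text{something}) \cdot n \cdot f_{\ell-1}$. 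To get the constant $k^\ell/2^{\ell+1}$ right one must be careful about where the $k/2$ per pair of steps comes from. The cleanest route is probably: condition on $v_{2i}$ for $i=1,\dots,\ell$ — these $\ell$ vertices can be chosen in at most $n^\ell$ ways — and then for the odd-indexed vertices, use that each consecutive pair $v_{2i}, v_{2i+2}$ has at most $\min(\deg v_{2i}, \deg v_{2i+2})$ common-neighbor candidates for $v_{2i+1}$; summing $\sum_{\text{edges of the } v_{2i}\text{-sequence}} (\ldots)$ and invoking Erd\H{o}s--Gallai on degrees gives the factor. Actually the most robust approach is a clean double-counting: $\N(P_{2\ell},G) \le \frac{1}{2}\sum_{e = xy} N_{2\ell-1}(x;y)$ and then recursively $N_{2j+1}(x; \text{forbidden set } F) \le \sum_{z \in N(x)\setminus F} N_{2j-1}(z; F \cup \{x\})$, peeling two steps at a time and at each pair using $\sum_z \deg(z) \le 2e(G) \le (k-1)n$ to extract one factor of $\frac{(k-1)n}{2} \sim \frac{kn}{2}$, with the $O$-terms absorbing the bounded forbidden sets.

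The main obstacle I anticipate is getting the constant exactly $k^\ell/2^{\ell+1}$ rather than something like $(k-1)^\ell/2^{\ell+1}$ or losing a factor because of overcounting between orientations or between the "first edge" choices. Since the statement only claims an upper bound of $\frac{k^\ell n^{\ell+1}}{2^{\ell+1}} + O(n^\ell)$ and $k-1 \le k$, using Erd\H{o}s--Gallai in the crude form $e(G) \le kn/2$ is already enough, so in fact the constant should come out cleanly provided the recursion is set up so that exactly $\ell$ factors of $n$ and $\ell-1$ factors of the edge-type bound appear, plus the initial $e(G)$ bound and the global division by $2$: that is $\frac{1}{2} \cdot \frac{kn}{2} \cdot \left(\frac{kn}{2}\right)^{\ell-1} = \frac{k^\ell n^\ell}{2^{\ell+1}}$ — wait, this is $n^\ell$ not $n^{\ell+1}$, so one of the $\ell$ steps must contribute a bare factor of $n$ rather than $kn/2$; I expect the endpoint $v_{2\ell}$ ranges freely over (almost) all $n$ vertices, giving the extra $n$, while the $\ell-1$ interior "pair-steps" and the initial edge give $\ell$ factors of $\sim kn/2$, for a total $\frac{1}{2}\cdot n \cdot \left(\frac{kn}{2}\right)^{\ell} \big/ (\text{one } kn/2) $... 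The bookkeeping of exactly which step is "free" versus "weighted" is precisely where care is needed, and I would pin it down by checking the base case $\ell = 1$ ($\N(P_2,G) = \sum_v \binom{\deg v}{2} \le \frac{1}{2}\sum_v \deg(v) \cdot n \le \frac{kn^2}{4} + O(n)$, matching $\frac{k n^2}{2^2}$) and $\ell = 2$ against the target before writing the general induction.
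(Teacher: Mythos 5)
There is a genuine gap: your argument never produces the factor $2^{\ell+1}$ in the denominator, and the places where it would have to appear are exactly the places you defer as ``bookkeeping.'' Concretely: (i) your base-case check contains an arithmetic slip --- $\sum_v \binom{d(v)}{2} \le \tfrac{1}{2}\sum_v d(v)\cdot n = e(G)\,n \le \tfrac{kn^2}{2}$, not $\tfrac{kn^2}{4}$; in fact the bound $\tfrac{kn^2}{4}$ already needs more than the Erd\H{o}s--Gallai edge count, since that count alone does not exclude a degree sequence with $k-1$ vertices of degree $n-1$. (ii) The uniform extension bound your main plan requires is false: in $G_{n,k,t}$ a directed edge from the independent set into the clique has about $n^{\ell}(k/2)^{\ell-1}$ extensions to a directed $P_{2\ell}$, so pairing a uniform per-edge extension bound with the $2e(G)\approx kn$ directed starting edges yields at best $\tfrac{k^\ell n^{\ell+1}}{2^{\ell}}$, a factor $2$ short. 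The same loss recurs at every level of the recursion: the honest bound for $\sum_{z\in N(x)}d(z)$ is $2e(G)\approx kn$ (tight when $x$ is a clique vertex of $G_{n,k,t}$), not $e(G)$, so each ``peel two steps'' contributes $kn$ where you need $kn/2$. The common-neighbourhood variant fares no better: in a disjoint union of copies of $K_k$ one has $\min(d(x),d(y))=k-1$ for every pair, so summing $\prod_i\min(d(v_{2i}),d(v_{2i+2}))$ over all choices of even-position vertices gives $\Theta(k^{\ell}n^{\ell+1})$ even though that graph has only $O(n)$ paths.

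The paper's proof supplies precisely the missing idea. Rather than growing the path step by step, it fixes all $\ell$ odd-position edges at once as an ordered matching together with a final vertex (a ``matching structure''); there are at most $\binom{e(G)}{\ell}\ell!\,n\approx (kn/2)^{\ell}n$ such structures, each $P_{2\ell}$ aligns with exactly two of them, and --- this is the crux --- at most one $P_{2\ell}$ aligns with any structure whose vertex span does not contain the configuration $M_1^{\ell}$ (an $(\ell-1)$-matching plus a disjoint triangle), because a second alignment would force an edge $a_ia_{i+1}$ and hence a triangle on $e_i\cup e_{i+1}$. Since a $P_k$-free graph contains only $O(n^{\ell})$ copies of $M_1^{\ell}$ (via Luo's linear bound on the number of triangles), the exceptional structures contribute only the error term. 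It is this ``unique alignment outside a rare configuration'' step, absent from your sketch, that converts the crude count $(kn/2)^{\ell}n$ into the stated bound after dividing by $2$; without something playing its role, any per-step recursion of the kind you describe necessarily loses a constant factor at each pair of steps.
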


\begin{proposition}
Let $\ell,k$ be positive integers with $2\ell+1 < k$, then 
\begin{displaymath}
\ex(n,P_{2\ell+1},P_k) \leq \frac{(\ell+2)k^{\ell+1} n^{\ell+1}}{2^{\ell+2}} + O(n^\ell).
\end{displaymath}

\end{proposition}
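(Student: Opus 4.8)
The plan is to count copies of $P_{2\ell+1}$ in a $P_k$-free graph $G$ by localizing each path at a distinguished ``central'' edge or vertex, just as the construction description suggests there are two types of paths. Since $2\ell+1$ is odd, a path with $2\ell+1$ edges has a unique middle edge $e = xy$ (the $(\ell+1)$-st edge). I would split the count of $P_{2\ell+1}$'s into contributions indexed by this middle edge $xy$: having fixed $xy$, a copy of $P_{2\ell+1}$ through $xy$ as its middle edge is obtained by choosing a path with $\ell$ edges hanging off $x$ (avoiding $y$) and a path with $\ell$ edges hanging off $y$ (avoiding $x$). So, up to the symmetry $x\leftrightarrow y$ and up to the lower-order overcounting coming from paths that could be reattached, $\N(P_{2\ell+1},G)$ is controlled by $\sum_{xy \in E(G)} (\text{\# of $P_\ell$'s rooted at }x)\cdot(\text{\# of $P_\ell$'s rooted at }y)$, divided by $2$.

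The key quantitative input is a bound on the number of paths of length $\ell$ with a prescribed endpoint $v$: I would show that in a $P_k$-free graph this number is $O(n^\ell)$, with leading coefficient essentially $(k/2)^\ell$ governed by the fact that rooted paths at $v$ of length $\ell$ alternate between a ``$U$-like'' independent part and an ``$S$-like'' part of size $t = \lfloor (k-1)/2\rfloor \sim k/2$, so that each rooted path of length $\ell$ contributes a factor of roughly $t$ for each step into the clique-like side and a factor of $n$ for each step into the independent side, yielding $t^{\lceil \ell/2 \rceil} n^{\lfloor \ell/2 \rfloor}$-type bounds; more precisely, one wants the aggregate $\sum_v (\text{\# of rooted }P_\ell\text{ at }v) = O(n^{\lfloor \ell/2\rfloor+1} t^{\lceil \ell/2\rceil})$. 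Then summing the product bound over edges $xy$ and using the Erd\H os--Gallai edge bound $e(G) \le (k-1)n/2$ (Theorem~\ref{eg}) to handle the number of edges, together with the bound $\N(K_r,P_k) = O(n)$ noted after Theorem~\ref{luo} for cleaning up lower-order clique-type terms, I would assemble the claimed $\frac{(\ell+2)k^{\ell+1}n^{\ell+1}}{2^{\ell+2}} + O(n^\ell)$. The factor $\ell+2$ should emerge from the $\ell$ ways of placing the unique $S$-edge among the $\ell$ edge-steps of the ``first type'' of path plus the extra $2$ accounting for the ``second type'' of path that ends in $S$ without ever using an $S$-edge; carefully bookkeeping these two families and their overcounting constants is where the exact leading coefficient is pinned down.

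I expect the main obstacle to be controlling the interaction between the two halves of the path when $G$ is not the extremal graph: a naive product bound $\sum_{xy}(\#P_\ell \text{ at } x)(\#P_\ell \text{ at } y)$ overcounts and, worse, could in principle be larger than the target if vertices with many rooted paths cluster together. To get the right constant I would need a structural localization — essentially, argue that asymptotically all $P_{2\ell+1}$'s that matter use at most one edge inside a bounded-size ``dense'' vertex set and otherwise pass through low-degree vertices, so that the rooted-path counts from $x$ and from $y$ cannot both be of maximal order $n^\ell$ simultaneously unless we are near the extremal configuration. Concretely I would peel off vertices of small degree by induction (as in the standard Erd\H os--Gallai / Kopylov arguments), reducing to a ``core'' where the minimum degree is at least, say, $2\ell$, and then bound the number of $P_{2\ell+1}$'s with all vertices in the core crudely (it is $O(n^\ell)$ there because such a core has $O(1)$ vertices, or has bounded structure) while handling the paths that leave the core via the rooted-path estimate. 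The bulk of the remaining work is then the routine-but-delicate double-counting that matches the leading term to the construction; I would organize it so that the $O(n^\ell)$ error absorbs all boundary effects, the omitted even-$k$ edge, and the induction losses.
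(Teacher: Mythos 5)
Your plan has a genuine gap at exactly the point you flag as "routine-but-delicate": nothing in the proposal actually produces the factor $\ell+2$ in the upper bound. Decomposing each $P_{2\ell+1}$ at its middle edge $xy$ and bounding by $\frac12\sum_{xy\in E(G)} N_\ell(x)N_\ell(y)$, where $N_\ell(v)$ is the number of rooted $P_\ell$'s at $v$, is a valid upper bound, but to extract the stated constant you would need to control this sum of products with the correct leading coefficient, and neither of your two proposed mechanisms does that. The rooted-path counts $N_\ell(x)$ themselves carry constants coming from the several "types" of half-paths (how many steps go into the clique-like side, whether a clique edge is used), and multiplying worst-case bounds for $N_\ell(x)$ and $N_\ell(y)$ over all $\approx kn/2$ edges overshoots the target; your heuristic derivation of $\ell+2$ describes the lower-bound construction, not a step of the upper-bound argument. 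The "core" reduction is also false as stated: after peeling vertices of degree less than $2\ell$, the remaining graph need not have $O(1)$ vertices or bounded structure (disjoint copies of $K_k$, or $G_{n,k,t}$ itself when $t\ge 2\ell$, give cores with $\Theta(n)$ vertices), so the paths living entirely in the core cannot be dismissed as $O(n^\ell)$. Finally, asserting that the two halves "cannot both be of maximal order unless we are near the extremal configuration" is a stability statement that is itself as hard as the theorem and is not proved.

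The paper avoids all of this with a different decomposition: instead of the middle edge, it records the $\ell+1$ odd-position edges of the path as an ordered matching (a "matching structure"), discards the $O(n^\ell)$ structures whose vertex span contains an $(\ell-1)$-matching plus a $K_4$ or an $(\ell-2)$-matching plus two triangles (Lemma~\ref{matching}), and then shows by an explicit relabeling argument that each surviving matching structure is aligned with at most $\ell+2$ paths --- this is where the constant comes from --- while each path aligns with exactly two structures. The number of matching structures is then bounded directly by the Erd\H{o}s--Gallai edge count, $\binom{nk/2}{\ell+1}(\ell+1)!$, which gives the leading term $k^{\ell+1}n^{\ell+1}/2^{\ell+1}$ before multiplying by $(\ell+2)/2$. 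If you want to salvage your middle-edge approach, the missing ingredient is an analogue of that local claim: a proof that, outside $O(n^\ell)$ degenerate configurations, a fixed middle edge together with the vertex sets of its two half-paths admits only a bounded number of completions, with the bound summing to $\ell+2$ across the relevant cases.
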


The proofs of the propositions above will use a double-counting argument involving structures defined using matchings.  We will begin by estimating the maximum number of certain kinds of matchings occurring in a $P_k$-free graph.

Let us define $M_1^{\ell}$, $M_2^{\ell}$ and $M_3^{\ell}$ to be the following graphs: $M_1^{\ell}$ is an $(\ell-1)$-matching together with a disjoint triangle, $M_2^{\ell}$ is an $(\ell-1)$-matching together with a disjoint $K_4$ and  $M_3^{\ell}$ is an $(\ell-2)$-matching with two independent triangles, disjoint from the matching (see Figure \ref{negmatch}).

%--------------------- picture
\begin{figure}[b]
\centering
\label{negmatch}
\begin{tikzpicture}
%M1
\filldraw (0,0) circle (2pt) -- (0,1) circle (2pt);
\filldraw (.5,0) circle (2pt) -- (.5,1) circle (2pt);

\filldraw (1.5,0) circle (2pt) -- (1.5,1) circle (2pt);
\filldraw (2,0) circle (2pt) -- (2,1) circle (2pt);
\filldraw (2,0) circle (2pt) -- (2.5,.5) circle (2pt)  -- (2,1) circle (2pt);

\draw (1.25,1.2) node[align=center, above]  {$M_1^{\ell}$};

\draw (1,0) node[align=center]{$\cdots$};
\draw [decorate,decoration={brace,amplitude=10pt,mirror},yshift=-4pt](0,0) -- (1.5,0) node[black,midway,yshift=-0.6cm] {\footnotesize $\ell-1$};

%M2

\filldraw (4,0) circle (2pt) -- (4,1) circle (2pt);
\filldraw (4.5,0) circle (2pt) -- (4.5,1) circle (2pt);

\filldraw (5.5,0) circle (2pt) -- (5.5,1) circle (2pt);
\filldraw (6,0) circle (2pt) -- (6,1) circle (2pt);
\filldraw (6.5,0) circle (2pt) -- (6.5,1) circle (2pt);

\draw (6,0)  -- (6.5,0)  -- (6,1) ;
\draw (6,0)  -- (6.5,1)  -- (6,1) ;

\draw (5,1.2) node[align=center, above]  {$M_2^{\ell}$};

\draw (5,0) node[align=center]{$\cdots$};
\draw [decorate,decoration={brace,amplitude=10pt,mirror},yshift=-4pt](4,0) -- (5.5,0) node[black,midway,yshift=-0.6cm] {\footnotesize $\ell-1$};

%M3
\filldraw (8,0) circle (2pt) -- (8,1) circle (2pt);
\filldraw (8.5,0) circle (2pt) -- (8.5,1) circle (2pt);

\filldraw (9.5,0) circle (2pt) -- (9.5,1) circle (2pt);
\filldraw (10,0) circle (2pt) -- (10,1) circle (2pt);
\filldraw (10,0) circle (2pt) -- (10.5,0) circle (2pt) -- (10,1) circle (2pt);
\filldraw (11,0) circle (2pt) -- (10.5,1) circle (2pt) -- (11,1) circle (2pt);
\filldraw (11,0) circle (2pt) -- (11,1) circle (2pt);
\draw (9.5,1.2) node[align=center, above]  {$M_3^{\ell}$};

\draw (9,0) node[align=center]{$\cdots$};
\draw [decorate,decoration={brace,amplitude=10pt,mirror},yshift=-4pt](8,0) -- (9.5,0) node[black,midway,yshift=-0.6cm] {\footnotesize $\ell-2$};

\end{tikzpicture}
\caption{Matching structures with negligible contribution.}
\end{figure}
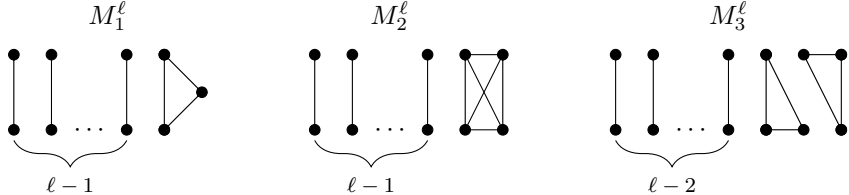
%----------------------

\begin{lemma}
\label{matching}
The number of copies of $M_1^{\ell}$, $M^{\ell}_2$ and $M^{\ell}_3$  in an $n$-vertex $P_k$-free graph is $O(n^{\ell})$.  
\end{lemma}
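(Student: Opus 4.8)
The plan is to bound the number of copies of each $M_i^\ell$ by decomposing the structure into its connected pieces and counting the matchings and the small cliques/near-cliques separately. The key observation is that all three structures $M_1^\ell$, $M_2^\ell$, $M_3^\ell$ contain a matching of size roughly $\ell-1$ plus a bounded-size dense gadget, and each ``gadget'' (a triangle or a $K_4$) contains an edge, so the whole structure spans only $O(\ell)$ edges; the trouble is that the number of matchings of size $m$ in a graph with $O(n)$ edges can be as large as $\Theta(n^m)$, and the structures have $\ell$ or $\ell-1$ disjoint edges, which would naively give $n^\ell$ or more. So I must be more careful: I will use the fact that in a $P_k$-free graph $G$ we have $e(G) = O(n)$ (Theorem \ref{eg}), and then observe that a copy of, say, $M_1^\ell$ consists of an $(\ell-1)$-matching together with a disjoint triangle. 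The number of triangles in a $P_k$-free graph is $O(n)$ by Theorem \ref{luo} (with $r=3$); the number of $(\ell-1)$-matchings in a graph with $O(n)$ edges is $O(n^{\ell-1})$, since each matching is a choice of $\ell-1$ edges. Multiplying, a naive count gives $O(n) \cdot O(n^{\ell-1}) = O(n^\ell)$, as desired. The same argument works for $M_2^\ell$ using that the number of $K_4$'s is $O(n)$ (again Theorem \ref{luo}, $r=4$), and for $M_3^\ell$ using that the number of pairs of disjoint triangles is at most $\N(K_3,G)^2 = O(n^2)$ while the matching part has only $\ell-2$ edges, giving $O(n^2)\cdot O(n^{\ell-2}) = O(n^\ell)$.

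First I would record the uniform edge bound: since $G$ is $P_k$-free, $e(G) \le \frac{(k-1)n}{2} = O(n)$ by Theorem \ref{eg}, where the implied constant depends only on $k$. Next I would note that the number of $m$-matchings in $G$ is at most $\binom{e(G)}{m} = O(n^m)$, the constant again depending only on $k$ and $m$. Then I would invoke Theorem \ref{luo} (or rather the remark following it, that $\ex(n,K_r,P_k) \le c_{k,r} n$): the number of triangles and the number of $K_4$'s in $G$ are each $O(n)$. Finally, for each $i$, I would bound $\N(M_i^\ell, G)$ by the product of the number of ways to choose the gadget(s) and the number of ways to choose a matching of the appropriate size among the remaining edges — which is at most the total number of matchings of that size in all of $G$ — since any copy of $M_i^\ell$ arises (possibly with multiplicity bounded by a constant depending on $\ell$) from such a choice. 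This yields $\N(M_1^\ell,G), \N(M_2^\ell,G) = O(n) \cdot O(n^{\ell-1}) = O(n^\ell)$ and $\N(M_3^\ell,G) = O(n^2) \cdot O(n^{\ell-2}) = O(n^\ell)$.

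The main (and only mild) obstacle is bookkeeping: I must make sure I am not undercounting by requiring the matching edges to avoid the gadget vertices — but since I am proving an upper bound, I may freely overcount by allowing the matching to use any $\ell-1$ (resp.\ $\ell-2$) edges of $G$, not just those disjoint from the gadget, which only inflates the count by a constant-bounded factor and keeps the order $O(n^\ell)$. A secondary point to state cleanly is that the constant in each $O(n^\ell)$ depends only on $k$ and $\ell$, which is all that is needed in the subsequent double-counting arguments. No step requires more than the two cited theorems and the trivial matching count, so the proof is short.
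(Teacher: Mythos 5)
Your proposal is correct and follows essentially the same route as the paper: bound the number of edges by Theorem \ref{eg}, bound the number of triangles (resp.\ $K_4$'s, pairs of triangles) by Theorem \ref{luo}, and multiply the count of the dense gadget by the count of $(\ell-1)$- or $(\ell-2)$-element edge subsets to get $O(n^\ell)$. The paper's proof is exactly this product bound, $\binom{kn/2}{\ell-1}\cdot O(k^2 n)$ for $M_1^\ell$, with the other two cases handled the same way.
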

\begin{proof}
Let $G$ be a $P_k$-free graph on $n$ vertices.  By Theorem \ref{luo}, the number of triangles in $G$ is at most $O(k^2n)$. By Theorem \ref{eg} the total number of edges in $G$ is at most $(k-1)n/2$.  It follows that the number of copies of $M_1^\ell$ is bounded from above by 
\begin{displaymath}
\binom{\frac{kn}{2}}{\ell-1} k^2n = O(k^{\ell+1}n^\ell). 
\end{displaymath}
The proofs of the bound for $M^{\ell}_2$ and $M^{\ell}_3$ are similar.
\end{proof}

\begin{proof}[of Theorem \ref{evenP}]
Let $G$ be a $P_k$-free graph on $n$ vertices.  We will consider structures consisting of a matching of $\ell$ edges and a vertex not contained in these edges.  Namely, a \emph{matching structure} is an $(\ell+1)$-tuple $(e_1,e_2,\dots,e_\ell,v)$ where $\{e_1,e_2,\dots,e_\ell\}$ is a matching in $G$ and $v \in V(G) \setminus \cup_{i=1}^\ell e_i$.  We say that a path $P_{2\ell}$ \emph{aligns} with a matching structure $(e_1,e_2,\dots,e_\ell,v)$ if its edges are (consecutively) $e_1, f_1, e_2, f_2, \dots,  e_{\ell}, f_{\ell}$ where $v \in f_\ell$. We say that the matching structure \emph{spans} the set of vertices $\cup_{i=1}^\ell e_i \cup \{v\}$.

Let $\A \coloneqq \{S\subseteq V: \abs{S} = 2\ell +1, M_1^{\ell}\subseteq G[S]\}$. By  Lemma \ref{matching}, we have $\abs{\A} = O(n^{\ell})$. Let $\M$ be the set of all the matching structures which span a set of vertices not contained in $\A$.

\begin{claim}
\label{firstclaim}
At most one $P_{2\ell}$ aligns with each matching structure in $\mathcal{M}$.
\end{claim}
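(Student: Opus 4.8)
The plan is to argue by contradiction. Suppose two \emph{distinct} copies $P \ne P'$ of $P_{2\ell}$ both align with a matching structure $\sigma = (e_1,\dots,e_\ell,v)\in\M$; I will show that the $(2\ell+1)$-element set $S \coloneqq \{v\}\cup\bigcup_{i=1}^{\ell} e_i$ spanned by $\sigma$ then satisfies $M_1^{\ell}\subseteq G[S]$, so $S\in\A$, contradicting the defining property of $\M$. First I would fix coordinates. If a path aligns with $\sigma$, its edges read from one end are $e_1,f_1,e_2,f_2,\dots,e_\ell,f_\ell$ with $v\in f_\ell$; since $v\notin\bigcup_i e_i$ and $f_\ell$ meets $e_\ell$, the vertex $v$ must be terminal. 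Reading such a path from $v$ gives a vertex sequence $w_0=v,w_1,\dots,w_{2\ell}$ with $\{w_{2j-1},w_{2j}\}=e_{\ell-j+1}$ for $1\le j\le\ell$ and with each connecting pair $w_{2j}w_{2j+1}$ ($0\le j\le\ell-1$) an edge of $G$. Hence the vertex set of every aligning path is exactly $S$, and the only remaining freedom is, for each $j$, which endpoint of $e_{\ell-j+1}$ is $w_{2j-1}$. Reading $P$ and $P'$ from $v$ thus produces two such sequences $(w_j)$ and $(w'_j)$, distinct because $P\ne P'$.

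Next I would examine the first discrepancy. Let $m$ be least with $w_m\ne w'_m$; then $m\ge 1$. A parity check forbids $m$ even: if $m=2i$ then $w_{2i-1}=w'_{2i-1}$, and since $\{w_{2i-1},w_{2i}\}=e_{\ell-i+1}=\{w'_{2i-1},w'_{2i}\}$ is a $2$-set, $w_{2i}=w'_{2i}$, a contradiction. So $m=2i-1$, and since both sequences realize $e_{\ell-i+1}$ on the slot pair $(2i-1,2i)$ but disagree at slot $2i-1$, they swap its endpoints: writing $e_{\ell-i+1}=\{x,y\}$, say $w_{2i-1}=x$, $w_{2i}=y$, $w'_{2i-1}=y$, $w'_{2i}=x$, with $w_{2i-2}=w'_{2i-2}$. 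The connecting edges $w_{2i-2}w_{2i-1}$ of $P$ and $w'_{2i-2}w'_{2i-1}$ of $P'$ show $w_{2i-2}$ is adjacent to both $x$ and $y$, so with $xy=e_{\ell-i+1}\in E(G)$ we obtain a triangle $T=\{w_{2i-2},x,y\}$ on three distinct vertices (three consecutive vertices of $P$, or $\{v\}\cup e_\ell$ when $i=1$).

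It remains to build the rest of $M_1^{\ell}$. Removing $T=\{w_{2i-2},w_{2i-1},w_{2i}\}$ splits the sequence $w_0,\dots,w_{2\ell}$ into the even-length blocks $w_0,\dots,w_{2i-3}$ and $w_{2i+1},\dots,w_{2\ell}$. I would match the first block along the connecting edges $w_0w_1,w_2w_3,\dots,w_{2i-4}w_{2i-3}$ and the second along the matching edges $w_{2i+1}w_{2i+2},\dots,w_{2\ell-1}w_{2\ell}$ (which are exactly $e_{\ell-i},\dots,e_1$); this is an $(\ell-1)$-matching in $G[S]$ disjoint from $T$, i.e.\ a copy of $M_1^{\ell}$ in $G[S]$, giving $S\in\A$ and the contradiction. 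I expect the only real difficulty to be the bookkeeping: confirming $m$ is odd, that the ``swap'' is forced, that the residual blocks have even length, and that the edges used to match them truly lie in $G$; the extreme cases $i=1$ and $i=\ell$ make one block empty but otherwise follow the same template.
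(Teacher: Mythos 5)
Your proof is correct and takes essentially the same approach as the paper: you locate the first vertex (reading from $v$) where two aligning paths disagree, extract the resulting triangle, and complete it to a copy of $M_1^{\ell}$ using the remaining matching and connecting edges, contradicting the assumption that the spanned set is not in $\A$. The paper phrases the same argument as the non-existence of the chords $\{a_i,a_{i+1}\}$ along a fixed aligning path, which forces the traversal from $v$ at every step; your first-discrepancy bookkeeping is just a more explicit rendering of that uniqueness step.
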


\begin{proof}
Let $(e_1,e_2,\dots,e_\ell,v)$ be a matching structure in $\mathcal{M}$ and fix a $P_{2\ell}$ which aligns with it, say \\ $a_1,b_1,a_2,b_2\dots,a_{\ell},b_\ell,a_{\ell+1}$, where $e_i = \{a_i,b_i\}$ and $v=a_{\ell+1}$.  There is no edge from $a_i$ to $a_{i+1}$, otherwise $e_1,e_2,\dots,e_{i-1}, \{b_{i+1},a_{i+2}\},\{b_{i+2},a_{i+3}\},\dots,\{b_{\ell},a_{\ell+1}\}$ together with the triangle $\{a_i,b_i,a_{i+1}\}$ forms an $M_1^{\ell}$.  Since there is a unique $P_{2\ell}$ spanning the matching structure and not containing an edge $\{a_i,a_{i+1}\}$, the claim is proved. (See Figure \ref{bob}.)
\end{proof}

%--------------picture 
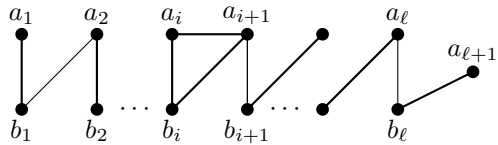
\begin{figure}[b]
\centering
\begin{tikzpicture}
%Claim 1
\filldraw[thick] (0,0) node[align=center, below]{$b_1$} circle (2pt) -- (0,1) node[align=center, above]{$a_1$} circle (2pt);
\draw (0,0) -- (1,1);
\filldraw[thick] (1,0) node[align=center, below]{$b_2$} circle (2pt) -- (1,1) node[align=center, above]{$a_2$}  circle (2pt);
\draw (1.5,0) node[align=center]{$\cdots$};

\filldraw[thick] (2,0) node[align=center, below]{$b_i$} circle (2pt) -- (2,1) node[align=center, above]{$a_i$} circle (2pt);
\filldraw[thick] (2,0) -- (3,1) node[align=center, above]{$a_{i+1}$} circle (2pt) -- (2,1) ;
\filldraw (3,0)   -- (3,1) ;
\filldraw[thick] (3,0) node[align=center, below]{$b_{i+1}$} circle (2pt) -- (4,1) circle (2pt);

\draw (3.5,0) node[align=center]{$\cdots$};

\draw (5,0) -- (5,1);
\filldraw[thick] (5,1) node[align=center, above]{$a_{\ell}$} circle (2pt) -- (4,0) circle (2pt);
\filldraw[thick] (5,0)  node[align=center, below]{$b_\ell$} circle (2pt) -- (6,.5) circle (2pt) node[align=center, above]{$a_{\ell+1}$} ;

\end{tikzpicture}
\caption{Matching structure from the proof of Claim \ref{firstclaim}.}
\label{bob}
\end{figure}

%------------------------

Next, we observe that for every $P_{2\ell}$, there are precisely two matching structures for which that $P_{2\ell}$ is aligned.  Indeed, let the vertices of the $P_{2\ell}$ be traversed in the order $v_1,v_2,\dots,v_{2\ell+1}$, then the two matching structures with which the $P_{2\ell}$ aligns are 
\begin{displaymath}
(\{v_1,v_2\},\{v_3,v_4\},\dots,\{v_{2\ell-1},v_{2\ell}\},v_{2\ell+1}) \mbox{ and }
(\{v_{2\ell+1},v_{2\ell}\},\{v_{2\ell-1},v_{2\ell-2}\},\dots,\{v_3,v_2\},v_1).
\end{displaymath} 
It follows that the if we define $M \coloneqq \abs{\mathcal{M}}$, then the number of copies of $P_{2\ell}$ is bounded from above by $M/2 + O(n^\ell)$.  

By Theorem \ref{eg}, the number of edges in $G$ is at most $(k-1)n/2$.  A matching structure is formed by choosing $\ell$ edges in order followed by an additional vertex.  Thus, we have the following upper bound on the number of matching structures in $\mathcal{M}$:
\begin{displaymath}
M \le \binom{\frac{nk}{2}}{\ell}\ell!n.
\end{displaymath}
Dividing by $2$ yields the required bound on the number of copies of $P_{2\ell}$.
\end{proof}

\begin{proof}[of Theorem \ref{oddP}]
We will now define matching structures in a slightly different way.  A \emph{matching structure} is an $(\ell+1)$-tuple $(e_1,e_2,\dots,e_{\ell+1})$, where $\{e_1,e_2,\dots,e_{\ell+1}\}$ is a matching in $G$.  A path $P_{2\ell+1}$ \emph{aligns} with a matching structure $(e_1,e_2,\dots,e_{\ell+1})$ if its edges are $e_1, f_1, e_2, f_2, \dots, e_\ell,  f_\ell, e_{\ell+1}$, consecutively.

Let $\mathcal{B} \coloneqq \{S\subseteq V: \abs{S} = 2\ell +2, M_2\subseteq G[S]\}$ and $\mathcal{C} \coloneqq \{S\subseteq V: \abs{S} = 2\ell +2, M_3\subseteq G[S]\}$. By Lemma \ref{matching}, we have $ \abs{\mathcal{B}} = O(n^{\ell})$ and  $\abs{\mathcal{C}} = O(n^{\ell})$. Let $\mathcal{M}$ be the set of matching structures which do not span a vertex set in $\mathcal{B}$ or $\mathcal{C}$.

\begin{claim}
There are at most $\ell+2$ copies of $P_{2\ell+1}$ which align with each matching structure in $\mathcal{M}$.
\end{claim}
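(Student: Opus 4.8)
The plan is to build aligned copies of $P_{2\ell+1}$ edge by edge along the matching structure and show that the running count can grow by at most $1$ at each step, unless a copy of $M_2^\ell$ or $M_3^\ell$ is forced. Fix a matching structure $(e_1,\dots,e_{\ell+1})\in\mathcal{M}$, write $e_i=\{x_i,y_i\}$, let $W=\bigcup_{i}e_i$ (so $\abs{W}=2\ell+2$), and for $v\in e_i$ write $\overline{v}$ for the other endpoint of $e_i$. For $1\le i\le\ell+1$ and $w\in e_i$, let $c_i(w)$ be the number of paths $v_0v_1\cdots v_{2i-1}$ in $G$ with $\{v_{2j},v_{2j+1}\}=e_{j+1}$ for $0\le j<i$ and $v_{2i-1}=w$ — the ``initial segments through $e_i$'' of aligned copies of $P_{2\ell+1}$ (such a segment is automatically a path, since consecutive $e_j$'s are disjoint). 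Since reversing a traversal reverses the order of the $e_j$'s, each aligned copy of $P_{2\ell+1}$ is counted exactly once by $\sum_{w\in e_{\ell+1}}c_{\ell+1}(w)$, which is therefore the quantity to bound by $\ell+2$. We have $c_1\equiv1$, and for $i\ge1$ and $w\in e_{i+1}$, $c_{i+1}(w)=\sum_{w'\in e_i,\ w'\overline{w}\in E(G)}c_i(w')$.

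First I would establish a telescoping bound. Put $\Sigma_i=\sum_{w\in e_i}c_i(w)$, so $\Sigma_1=2$. Summing the recursion, $\Sigma_{i+1}=\sum_{w'\in e_i}c_i(w')\,d(w')$, where $d(w')=\abs{\{u\in e_{i+1}:w'u\in E(G)\}}\in\{0,1,2\}$, and $d(w')=2$ exactly when $\{w'\}\cup e_{i+1}$ induces a triangle; hence $\Sigma_{i+1}\le\Sigma_i+\sum_{w'\in e_i:\,d(w')=2}c_i(w')$. If both endpoints of $e_i$ had $d=2$, then $e_i\cup e_{i+1}$ would induce a $K_4$, and together with the $\ell-1$ edges $e_j$ ($j\notin\{i,i+1\}$) this would be a copy of $M_2^\ell$ in $G[W]$, contradicting $(e_1,\dots,e_{\ell+1})\in\mathcal{M}$. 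So at most one endpoint $w_i^\ast$ of $e_i$ has $d(w_i^\ast)=2$, and $\Sigma_{i+1}\le\Sigma_i+c_i(w_i^\ast)$ (read as $\Sigma_{i+1}\le\Sigma_i$ if no such $w_i^\ast$ exists).

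The heart of the argument is the inequality $c_i(w_i^\ast)\le1$ whenever $w_i^\ast$ exists; granting it, $\Sigma_{i+1}\le\Sigma_i+1$ for all $i$, so $\Sigma_{\ell+1}\le\ell+2$ and the claim follows. Suppose instead $c_i(w_i^\ast)\ge2$, and set $z_i:=w_i^\ast$, so $T_2:=\{z_i\}\cup e_{i+1}$ induces a triangle. Now I would \emph{descend}: while $j\ge2$, examine $c_j(z_j)=\sum_{w'\in e_{j-1},\ w'\overline{z_j}\in E(G)}c_{j-1}(w')$. If $\overline{z_j}$ is adjacent to both endpoints of $e_{j-1}$, stop, put $m:=j$, and let $T_1:=\{\overline{z_j}\}\cup e_{j-1}$ (a triangle). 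Otherwise, since $c_j(z_j)\ge2>0$, there is a \emph{unique} $w'\in e_{j-1}$ with $w'\overline{z_j}\in E(G)$; call it $z_{j-1}$, record the edge $g_{j-1}:=z_{j-1}\overline{z_j}$, observe $c_{j-1}(z_{j-1})=c_j(z_j)\ge2$, and repeat with $j-1$. Since $c_1\equiv1$, the descent cannot reach $j=1$, so it halts at some $m\in\{2,\dots,i\}$. I would then check that $T_1$, $T_2$, the $i-m$ edges $g_m,\dots,g_{i-1}$ (where $g_k=z_k\overline{z_{k+1}}$), and the $\ell-(i-m)-2$ edges $e_j$ with $j\notin\{m-1,m,\dots,i+1\}$ are pairwise vertex-disjoint and cover $W$: $T_1$ uses $e_{m-1}$ together with the endpoint $\overline{z_m}$ of $e_m$; $g_k$ uses one endpoint of $e_k$ and one of $e_{k+1}$; $T_2$ uses $e_{i+1}$ together with the endpoint $z_i$ of $e_i$; thus each index in $\{m,\dots,i\}$ has its two endpoints assigned to two different pieces, and $e_{m-1},e_{i+1}$ are covered by $T_1,T_2$. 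This is a vertex-disjoint union, inside $G[W]$, of two triangles and a matching of size $(i-m)+\big(\ell-(i-m)-2\big)=\ell-2$, i.e.\ a copy of $M_3^\ell$, again contradicting $(e_1,\dots,e_{\ell+1})\in\mathcal{M}$. Hence $c_i(w_i^\ast)\le1$.

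The step I expect to be the real obstacle is the last one: pinning down exactly which matching edges the descent consumes, and checking that $T_1,T_2$, the descent edges $g_k$, and the untouched $e_j$'s partition $W$ with precisely $\ell-2$ matching edges — in particular that the $g_k$'s must themselves be counted into the matching part of $M_3^\ell$, and that the descent is forced to stop in the triangle case rather than overrun $e_1$, which holds only because $c_1\equiv1$ (this is also why, harmlessly, the descent never triggers when $\ell=1$, where $M_3^\ell$ is undefined and only the $M_2^\ell$ step is used).
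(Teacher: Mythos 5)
Your proof is correct, but it takes a genuinely different route from the paper. The paper first establishes a \emph{structural} normal form: it labels each $e_j=\{a_j,b_j\}$ so that $a_ja_{j+1}\notin E(G)$ for all $j$ (the labeling is built left to right, with a backward ``switching'' pass whose failure would produce an $M_3^{\ell}$, while a $K_4$ on $e_j\cup e_{j+1}$ is excluded as an $M_2^{\ell}$), and only then counts the aligned paths inside that labeled structure ($1$ path starting at $b_1$ plus at most $\ell+1$ starting at $a_1$). You instead bound the count directly by a recursion on partial traversals: $\Sigma_{i+1}\le\Sigma_i+c_i(w_i^\ast)$, where $M_2^{\ell}$-freeness guarantees at most one endpoint $w_i^\ast$ of $e_i$ sees all of $e_{i+1}$, and your descent shows $c_i(w_i^\ast)\le 1$ on pain of exhibiting two disjoint triangles plus an $(\ell-2)$-matching covering the span, i.e.\ an $M_3^{\ell}$. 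The two arguments invoke the forbidden configurations at analogous moments (your $w_i^\ast$ is the paper's ``$\{a_j\}\cup e_{j+1}$ forms a triangle'' case, and your descent mirrors the paper's backward relabeling), but yours never produces the global labeling: it trades the paper's somewhat delicate switching/relabeling bookkeeping for the bookkeeping of which vertices the descent consumes, which you verify carefully (the piece count $(i-m)+(\ell-(i-m)-2)=\ell-2$ and the exact cover of the $2\ell+2$-vertex span both check out, as does the harmless degeneration at $\ell=1$). A side benefit of your version is that it localizes exactly where the count can exceed $\Sigma_i+1$, whereas the paper's version yields the cleaner structural picture of Figure \ref{goal}; both are complete proofs of the claim.
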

\begin{proof}
Consider a matching structure $(e_1,e_2,\dots,e_{\ell+1}) \in \mathcal{M}$.   We will consider the edges in the matching structure one by one and show that we can label the vertices of each edge $e_j$ with $a_j$ and $b_j$ in such a way that there is no edge between $a_j$ and $a_{j+1}$. Thus, every path which aligns with the matching structure will be a subgraph of the graph pictured (on the top) in Figure \ref{goal}.  Given that the matching structure has this form, we may easily upper bound the number of copies of $P_{2\ell+1}$ which can align with it.  Indeed, if the $P_{2\ell+1}$ starts with the vertex $b_1$, there is at most one such path:  $b_1, a_1, b_2, a_2, \dots, b_{\ell+1}, a_{\ell+1}$.  If it starts with the vertex $a_1$, then for at most one $i$, $1 \le i \le \ell$, the path may use an edge $\{b_i,b_{i+1}\}$; all other choices are forced. Thus, in total there are at most $1 + (\ell+1)= \ell+2$ paths which align with such a matching structure. We now prove that the desired labeling of the edges exists.

We may suppose there is at least one edge from $e_i$ to $e_{i+1}$ for all $i=1,2,\dots,\ell$, otherwise no $P_{2\ell+1}$ aligns with the matching structure. We also know $e_i \cup e_{i+1}$ does not induce a $K_4$, so there is at least one edge missing among these 4 vertices. Now we may define $e_1 = \{a_1,b_1\}$ in such a way that there is at least one edge missing from $a_1$ to $e_2$.  Define $e_2=\{a_2,b_2\}$ such that there is no edge between $a_1$ and $a_2$. In general, suppose we have already labeled the edges $e_1,e_2,\dots,e_j$ in such a way that for $i\in\{1,2,\dots,j-1\}$, $a_i$ is not connected to $a_{i+1}$. We will show that $e_{j+1}$ can be labeled by $a_{j+1}$ and $b_{j+1}$ such that there is no edge between $a_j$ and $a_{j+1}$.

We know there is an edge missing from $e_j$ to $e_{j+1}$. If there is an edge missing between $a_j$ and $e_{j+1}$, then define $e_{j+1}=\{a_{j+1},b_{j+1}\}$ so that there is an edge from $a_j$ to $a_{j+1}$.  Otherwise $\{a_j\}\cup e_{j+1}$ forms a triangle. In this case, there is an edge missing from $b_j$ to $e_{j+1}$; label $e_{j+1}=\{a_{j+1},b_{j+1}\}$ so that $b_j$ is not adjacent to $b_{j+1}$.

Now if we do not have an edge from $a_{j-1}$ to $b_j$, then we switch the labels on $e_j$ and $e_{j+1}$, and we are done. (By switching the labels we mean that the vertex in $e_i$ previously labeled $a_i$ is now labeled $b_i$, and the vertex previously labeled $b_i$ is now labeled $a_i$.)  Thus, assume we have an edge from $a_{j-1}$ to $b_j$.  Then we have no edge from $b_{j-1}$ to $b_j$, for this would yield an $M_3$.  
Next, consider $e_{j-2}$. If there is no edge from $a_{j-2}$ to $b_{j-1}$, then switch the labels on $e_{j-1}$, $e_j$ and $e_{j+1}$, and we are done.  If there is an edge from $a_{j-2}$ to $b_{j-1}$, we proceed similarly with $e_{j-3}$.  Continuing this procedure we will reach an edge $e_r$ such that switching the labels of $e_r, e_{r+1},\dots,e_{j+1}$ yields no edge between $a_i$ and $a_{i+1}$ for any $1 \le i \le j$. (This procedure is illustrated in Figure \ref{goal}.) 
\end{proof}

%There must be an edge from $a_{j-2}$ to $b_{j-1}$, for otherwise we switch the labels of $e_{j-1}$, $e_{j}$ and $e_{j+1}$.  

 %now we can suppose $a_{j-1}$ is connected to $b_j$, otherwise we can exchange the label in $e_j$ and $e_{j+1}$ [draw picture](exchange $a_j$ with $b_j$ and $a_{j+1}$ with $b_{j+1}$) and in the new label $a_1,a_2,\dots,a_{j+1}$ would be a path a non-edge path, now if $b_{j-1}$ is connected to $b_j$ then $e_{j-1}\cup\{b_j\}$ is a triangle so together with $e_{j+1}\cup\{a_j\}$ and the $e_i$ with $i<j-1$ and $i>j+1$ we have a $M_3$. And if $b_{j-1}$ is not connected to $b_j$ then similarly we can assume $a_{j-2}$ is connected to $b_{j-1}$ and so on... so if 
%$b_{r-1}$ is connected to $b_r$ then $e_{r-1}\cup\{b_r\}$ is a triangle and so together with $e_{j+1}\cup\{a_j\}, \{a_r,b_{r+1}\},\{a_{r+1},b_{r+2}\},\dots,\{a_{j-1},b_j\}$ and $e_i$ for $i<r-1$ and $i>j+1$ make a $M_3$.

%So we with this label we can see our matching structure is a subgraph of $H_{\ell}$ (this graph resulting by joining the $K_4^-$ with the missing edge up), and we can see that $H_\ell$ have exactly $\ell+2$ paths using the matching structure. The matching structure is said to \emph{span} the set of vertices $\cup_{i=1}^{\ell+1}e$.

%-------------picture
\begin{figure}[b]
\centering
\begin{tikzpicture}
%H_\ell
\filldraw (0,0) circle (2pt) -- (0,1) circle (2pt);
\filldraw (1,0) circle (2pt) -- (1,1) circle (2pt);
\draw  (0,1) -- (1,0) -- (0,0) -- (1,1);

\filldraw (2,0) circle (2pt) -- (2,1) circle (2pt);
\draw  (2,1) -- (1,0) -- (2,0) -- (1,1);
\draw (2.2,0) node[align=center,right]{$\dots$};

\filldraw (3,0) circle (2pt) -- (3,1) circle (2pt);
\filldraw (4,0) circle (2pt) -- (4,1) circle (2pt);
\draw  (3,1) -- (4,0) -- (3,0) -- (4,1);

\draw [decorate,decoration={brace,amplitude=10pt,mirror},yshift=-4pt](0,0) -- (4,0) node[black,midway,yshift=-0.6cm] {\footnotesize $\ell+1$};

\end{tikzpicture}

\begin{tikzpicture}
%Claim 2

\draw[red,dashed] (0,1) -- (1,1);
\filldraw[thick] (0,0) node[align=center, below]{$b_1$} circle (2pt) -- (0,1) node[align=center, above]{$a_1$} circle (2pt);

\filldraw[thick] (1,0) node[align=center, below]{$b_2$} circle (2pt) -- (1,1) node[align=center, above]{$a_2$}  circle (2pt);

\draw[thick] (1.5,0) node[align=center]{$\cdots$};

\filldraw[thick] (2,0) node[align=center, below]{$b_{r-1}$} circle (2pt) -- (2,1) node[align=center, above]{$a_{r-1}$} circle (2pt);
\draw [red,dashed] (2,1) -- (3,1) ;
\draw [red,dashed] (5,1) -- (4,1);
\filldraw[thick] (3,1) node[align=center, above]{$a_{r}$} circle (2pt) -- (4,0) circle (2pt);
%\filldraw (2,0) node[align=center, below]{$b_{i+1}$} circle (2pt) -- (2,1) node[align=center, above]{$a_{i+1}$}  circle (2pt);
\filldraw[thick] (3,0) node[align=center, below]{$b_{r}$} circle (2pt) -- (2,1) circle (2pt);
\draw[thick] (3,0) -- (2,0);

\draw (4.5,0) node[align=center]{$\cdots$};
\draw[red,dashed] (5,0)--(6,0);

%\filldraw (3.2,0) circle (2pt) -- (3.2,1) circle (2pt);
\filldraw[thick] (4,1) circle (2pt) -- (5,0) circle (2pt) node[align=center, below]{$b_{j}$} ;
\filldraw[thick] (6,1) node[align=center, above]{$a_{j+1}$} circle (2pt) -- (5,1) node[align=center, above]{$a_{j}$} circle (2pt);
\filldraw[thick] (6,0)  node[align=center, below]{$b_{j+1}$} circle (2pt) -- (6,1) circle (2pt)  ;
\draw[thick] (5,1) -- (6,0);
\filldraw[thick] (7,0) circle (2pt) -- (7,1) circle (2pt);
%\draw[thick] (7.5,0) node[align=center]{$\cdots$};
\filldraw[thick] (8,0) circle (2pt)  -- (8,1) circle (2pt);
\draw (7.5,0) node[align=center]{$\cdots$};
\end{tikzpicture}
\caption{The structure of paths aligning with matching structures from $\mathcal{M}$.}
\label{goal}
\end{figure}
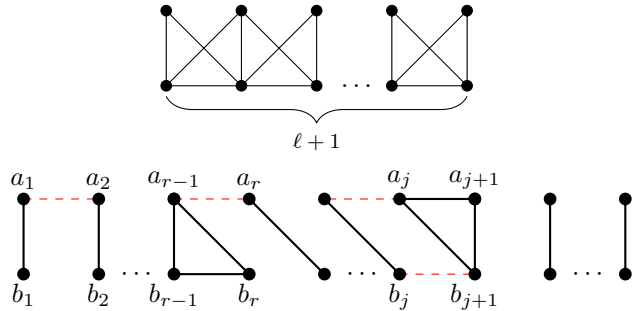

%-------------------

We now complete the proof of Theorem \ref{oddP}. Again we set $M \coloneqq \abs{\mathcal{M}}$.  By Theorem \ref{eg}, there are at most $(k-1)n/2$ total edges in $G$.  Thus,
\begin{displaymath}
\abs{\mathcal{M}}\le \binom{\frac{nk}{2}}{\ell+1}(\ell+1)!.
\end{displaymath}
Since at most $\ell+2$ paths $P_{2\ell+1}$ align with each matching structure from $\mathcal{M}$, and every $P_{2\ell+1}$ aligns with precisely two matching structures.  It follows that the total number of copies of $P_{2\ell+1}$ in $G$ is at most
\begin{displaymath}
\frac{(\ell+2)M}{2} + O(n^\ell) \le \frac{(\ell+2)\binom{\frac{nk}{2}}{\ell+1}(\ell+1)!}{2} + O(n^\ell) = \frac{(\ell+2)k^{\ell+1} n^{\ell+1}}{2^{\ell+2}} + O(n^\ell). \qedhere
\end{displaymath}
\end{proof}

%%%%%%%%%%%%%%%%%%%%%%%%%%%%%%%%%%%%%%%%%%%%%%%%%%%%%%%%%%%%%%%%%%%%%%%%%%%%%%%%%%%%%%%%%%%%%%%%%%%%%%%%%%%%%%%%%%%%%%%%%%%%%%%%  Cycles

The lower bound for Theorems \ref{evenC}  and \ref{oddC} also comes from $G_{n,k,t}$. Similarly as before, the upper bounds are consequence of the following propositions.

\begin{proposition} \label{p111}
Let $2\ell < k$, then 
\begin{displaymath}
\ex(n,C_{2\ell},P_k) \leq \frac{k^\ell n^{\ell}}{\ell 2^{\ell+1}} + O(n^{\ell-1}).
\end{displaymath}

\end{proposition}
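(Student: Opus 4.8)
The lower bound comes from counting copies of $C_{2\ell}$ in $G_{n,k,t}$, as indicated above. For the upper bound I would run the double-counting scheme of the proof of Theorem~\ref{evenP}, adapting it from paths to cycles. Call a \emph{matching structure} an ordered tuple $(e_1,\dots,e_\ell)$ of pairwise disjoint edges of the $P_k$-free graph $G$, and say that a copy of $C_{2\ell}$ \emph{aligns} with it if the edges of the cycle, read in cyclic order, are $e_1,f_1,e_2,f_2,\dots,e_\ell,f_\ell$ for some edges $f_1,\dots,f_\ell$ of $G$; thus $\{e_1,\dots,e_\ell\}$ and $\{f_1,\dots,f_\ell\}$ are the two perfect matchings of the cycle. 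The plan is to count the incidences between copies of $C_{2\ell}$ and matching structures. (Throughout, $\ell\ge 2$, and one may assume $2\ell<k$ so that $C_{2\ell}$ is not forced; $\ell=1$ is vacuous.)

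Two preliminary counts. First, for $\ell\ge 3$ each copy of $C_{2\ell}$ aligns with exactly $4\ell$ matching structures: it has two perfect matchings, and for each of them the $\ell$ (distinct) edges sit around the cycle in a cyclic order having no nontrivial rotational or reflective symmetry, so that cyclic order yields $2\ell$ distinct ordered tuples. Hence the number of incident pairs equals $4\ell\,\N(C_{2\ell},G)$. (For $\ell=2$ the count is $4$ and a trivial bookkeeping change is needed.) Second, a matching structure is an ordered $\ell$-tuple of disjoint edges, and Theorem~\ref{eg} gives $e(G)\le (k-1)n/2$, so the number of matching structures is at most $\binom{(k-1)n/2}{\ell}\ell!\le\bigl((k-1)n/2\bigr)^{\ell}$.

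The crux is a lemma stating that, outside a set $\B$ of $O(n^{\ell-1})$ exceptional matching structures, each matching structure is aligned by at most two copies of $C_{2\ell}$. The copies of $C_{2\ell}$ aligning with $(e_1,\dots,e_\ell)$ correspond to choices, for each $i$, of the endpoint $x_i$ of $e_i$ at which the cycle enters (leaving at the other endpoint $y_i$), subject to $y_ix_{i+1}\in E$ for all $i$ cyclically, and in $G_{n,k,t}$ essentially every $\ell$-tuple of disjoint edges joining the clique to the independent set admits exactly two such choices. Exactly as in Claim~\ref{firstclaim}, whenever a choice is not forced one uncovers a triangle through some $e_i$, and I expect that three or more aligning copies force either a $K_4$ or two vertex-disjoint triangles among the $2\ell$ vertices spanned by the structure; together with the remaining $\ell-2$ (resp.\ $\ell-3$) matching edges, this is a copy of $M_2^{\ell-1}$ or $M_3^{\ell-1}$. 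As $M_2^{\ell-1}$ and $M_3^{\ell-1}$ each have exactly $2\ell$ vertices, the counting argument of Lemma~\ref{matching}, together with the linear bounds on triangles and on copies of $K_4$ from Theorems~\ref{eg} and \ref{luo}, gives $O(n^{\ell-1})$ copies of each; these account for all matching structures in $\B$, and every such structure is aligned by at most $2^\ell$ copies of $C_{2\ell}$. Combining, with $m$ ranging over matching structures,
\[
4\ell\,\N(C_{2\ell},G)=\sum_{m}\#\{\,C_{2\ell}:C_{2\ell}\text{ aligns with }m\,\}\le 2\Bigl(\tfrac{(k-1)n}{2}\Bigr)^{\!\ell}+2^\ell\abs{\B},
\]
so $\N(C_{2\ell},G)\le \bigl((k-1)n/2\bigr)^{\ell}/(2\ell)+O(n^{\ell-1})=\tfrac{(k-1)^\ell n^\ell}{\ell\,2^{\ell+1}}+O(n^{\ell-1})\le \tfrac{k^\ell n^\ell}{\ell\,2^{\ell+1}}+O(n^{\ell-1})$.

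The main obstacle is exactly this lemma, and above all the requirement $\abs{\B}=O(n^{\ell-1})$. In the path problems (Theorems~\ref{evenP} and \ref{oddP}) the main term has order $n^{\ell+1}$, so discarding $\Theta(n^\ell)$ matching structures is harmless; here the main term has order only $n^\ell$, so the exceptional configurations must live on precisely the $2\ell$ spanned vertices. This is delicate because a matching structure can already be aligned by \emph{two} copies of $C_{2\ell}$ differing in a single coordinate just because $G$ contains a $K_4$ minus an edge on four of the $2\ell$ vertices, and a $P_k$-free graph may have $\Theta(n^2)$ such subgraphs (books, for instance). One must therefore cleanly separate ``exactly two aligning copies'' from ``at least three,'' and show that only the latter forces one of the dense $2\ell$-vertex configurations that can be counted.
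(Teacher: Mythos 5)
Your double-counting scheme is a genuinely different route from the paper's, which proves Proposition~\ref{p111} much more cheaply: Claim~\ref{diffclaim1} removes a minimum-degree vertex $v$ (of degree at most $t$, after disposing of the case $\delta(G)>t$ by Dirac's argument), splits each $C_{2\ell}$ through $v$ into the cherry at $v$ and a $P_{2(\ell-2)}$ on the remaining $2\ell-3$ vertices, bounds the number of such paths by the already-proved Theorem~\ref{evenP}, and then telescopes over $n$. That argument is a few lines long and inherits the constant from the path count; your approach must re-derive the constant from scratch, and the place where it does so is exactly the step you flag as the main obstacle. That step is a genuine gap, not a routine verification.

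Concretely, two things go wrong as written. First, the exceptional condition cannot be ``the spanned $2\ell$-set contains a $K_4$ or two vertex-disjoint triangles'': in $G_{n,k,t}$ a generic consecutive pair $e_i,e_{i+1}$ of a matching structure already carries three of the four cross edges and hence two triangles, so $\Theta(n^{\ell})$ matching structures span many pairs of disjoint triangles while admitting only two aligning cycles. What you actually need is that the spanned set contains the full $M_3^{\ell-1}$, i.e.\ two disjoint triangles \emph{plus a perfect matching on the remaining $2\ell-6$ spanned vertices}; only then does the counting of Lemma~\ref{matching} give $O(n^{\ell-1})$ exceptional structures, since ``two triangles each with a pendant vertex plus an $(\ell-4)$-matching'' has $\Theta(n^{\ell})$ copies in $G_{n,k,t}$. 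Proving that three aligning cycles force a $K_4$ on some $e_i\cup e_{i+1}$ or an $M_3^{\ell-1}$ on the whole spanned set (rather than merely two disjoint triangles leaving two unmatched vertices) is a nontrivial transfer-matrix or case analysis that the proposal does not supply; for instance, when two non-adjacent pairs $e_i\cup e_{i+1}$ and $e_j\cup e_{j+1}$ each carry three cross edges one can get three aligning cycles, and one must then choose the two triangles and a leftover cross edge carefully to land inside $M_3^{\ell-1}$. Second, the $\ell=2$ bookkeeping is not trivial: with four alignments per $C_4$ and ``at most two cycles per good structure'' you obtain $(k-1)^2n^2/8$, twice the claimed bound; there you need ``at most one,'' which holds because two distinct $C_4$'s aligning with the same ordered pair of opposite edges force a $K_4$. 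So the skeleton is sound and the constant does come out right for $\ell\ge 3$ once the key lemma is established, but that lemma is the substance of the proof and is missing.
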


\begin{proposition}
Let $2\ell+1 < k$, then 
\begin{displaymath}
\ex(n,C_{2\ell+1},P_k) \leq \frac{k^{\ell+1} n^{\ell}}{2^{\ell+2}} + O(n^{\ell-1}).
\end{displaymath}
\end{proposition}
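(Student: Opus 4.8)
The plan is to mirror the matching-structure double-counting used for Theorem~\ref{oddP}, but with \emph{cyclic} matching structures adapted to the parity $2\ell+1$ (odd cycles). Define a \emph{cyclic matching structure} to be an $(\ell+1)$-tuple of disjoint edges $(e_1,e_2,\dots,e_{\ell+1})$ forming a matching in $G$, together with the implicit cyclic order on its indices, and say that a copy of $C_{2\ell+1}$ \emph{aligns} with it if traversing the cycle we meet edges consecutively $e_1,f_1,e_2,f_2,\dots,e_\ell,f_\ell,e_{\ell+1}$ and then a final edge $f_{\ell+1}$ closing the cycle back to the start — note the cycle has $2\ell+1$ edges, so exactly one ``slot'' carries a single vertex rather than an $f$-edge; concretely the vertices of $C_{2\ell+1}$ are $v_1,\dots,v_{2\ell+1}$, the $e_i$ are the $\ell+1$ edges $\{v_1,v_2\},\{v_3,v_4\},\dots,\{v_{2\ell+1},v_1\}$ (wrapping around), and the $\ell$ remaining edges are the $f_i$. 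As in the path case, I first discard the $O(n^\ell)$ vertex sets spanning a copy of $M_1^\ell$, $M_2^\ell$ or $M_3^\ell$ (Lemma~\ref{matching}), and let $\mathcal{M}$ be the set of surviving cyclic matching structures.

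The key combinatorial step is a claim that each cyclic matching structure in $\mathcal{M}$ aligns with only boundedly many copies of $C_{2\ell+1}$ — a constant depending only on $\ell$, say at most $c_\ell$. The proof should follow the labeling argument in the proof of Theorem~\ref{oddP}: using the absence of $M_2^\ell$ (no induced $K_4$ on any pair $e_i\cup e_{i+1}$) and $M_3^\ell$ (no two ``independent'' triangles), one labels the endpoints of each $e_i$ as $a_i,b_i$ so that there is no edge $a_ia_{i+1}$ around the cycle, after which the number of ways to thread $C_{2\ell+1}$ through the $e_i$'s is bounded by a function of $\ell$ only. The one genuine subtlety versus the path case is the cyclic closure: the relabeling/switching trick in Theorem~\ref{oddP} propagates in one direction along a path and terminates at the first edge $e_r$; here the indices wrap around, so I need to check that the propagation cannot run all the way around the cycle and create a contradiction. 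The standard fix is that if it did wrap around, one would produce an $M_1^\ell$ (a triangle plus an $(\ell-1)$-matching), which is excluded since we are in $\mathcal{M}$ — and if not, the argument terminates exactly as before. I expect this cyclic-closure bookkeeping to be the main obstacle; everything else is a transcription of the path proof.

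For the counting, by Theorem~\ref{eg} the number of edges is at most $(k-1)n/2$, so the number of cyclic matching structures is at most
\begin{displaymath}
\abs{\mathcal{M}} \le \binom{\frac{nk}{2}}{\ell+1}(\ell+1)! = \frac{k^{\ell+1}n^{\ell+1}}{2^{\ell+1}} + O(n^\ell).
\end{displaymath}
Each copy of $C_{2\ell+1}$ aligns with a fixed number of cyclic matching structures — there are $2(\ell+1)$ rotations/reflections that pick out the edge set $\{e_1,\dots,e_{\ell+1}\}$ from the $2\ell+1$ edges of the cycle (choose which of the $\ell+1$ ``even-indexed'' positions starts the labeling, times $2$ for orientation, but one must be careful: the $2\ell+1$ edges split into the $e$-set and the $f$-set, and only one of the two interleavings of size $\ell+1$ is a matching of the right type), so one gets a constant number $d_\ell$ of aligning structures per cycle. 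Combining, the number of copies of $C_{2\ell+1}$ in $G$ is at most
\begin{displaymath}
\frac{c_\ell}{d_\ell}\abs{\mathcal{M}} + O(n^\ell)
= \frac{c_\ell}{d_\ell}\cdot\frac{k^{\ell+1}n^{\ell+1}}{2^{\ell+1}} + O(n^\ell),
\end{displaymath}
and one must pin down the constants $c_\ell,d_\ell$ so that $c_\ell/(d_\ell\, 2^{\ell+1}) = 1/2^{\ell+2}$ matches the claimed coefficient and, crucially, the degree in $n$ comes out as $n^\ell$ rather than $n^{\ell+1}$. The resolution of the degree discrepancy is that a cyclic matching structure that genuinely closes up into a cycle forces one additional adjacency among the chosen vertices (the closing edge $f_{\ell+1}$), which is not free: once $e_1,\dots,e_{\ell+1}$ and the closing vertex-to-vertex connections are fixed, the closing edge must exist in $G$, costing a factor $O(1/n)$ on average — equivalently, one should count not matching structures but matching structures \emph{with a designated closing edge}, i.e.\ tuples $(e_1,\dots,e_{\ell},\{u,w\})$ where $\{u,w\}$ is an edge; since after fixing $\ell$ edges the number of choices for the last edge is $O(n)$ rather than $O(n^2)$, we get $\abs{\mathcal{M}} \le \binom{nk/2}{\ell}\ell!\cdot(k-1)n \cdot O(1) = \tfrac{k^{\ell+1}n^{\ell}}{2^{\ell}}\cdot O(1)$, yielding the desired $O(n^\ell)$ order, and a careful accounting of the constants gives the stated bound $\tfrac{k^{\ell+1}n^\ell}{2^{\ell+2}} + O(n^{\ell-1})$. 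The even-cycle proposition (Proposition~\ref{p111}) is handled identically with $\ell$ edges instead of $\ell+1$ and a cyclic structure of $\ell$ edges interleaved with $\ell$ connector edges; the extra factor $1/\ell$ in the coefficient arises from the $\ell$-fold cyclic symmetry of the $C_{2\ell}$ that identifies which edge is ``$e_1$''.
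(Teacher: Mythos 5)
There is a genuine gap, and it sits exactly where you flag the ``degree discrepancy.'' First, a structural problem: an odd cycle $C_{2\ell+1}$ has matching number $\ell$, so your concrete list $\{v_1,v_2\},\{v_3,v_4\},\dots,\{v_{2\ell+1},v_1\}$ is not a matching ($e_1$ and $e_{\ell+1}$ share $v_1$), and no set of $\ell+1$ pairwise disjoint edges exists inside the cycle at all. More seriously, even after repairing the definition to ``$\ell$ disjoint edges plus a closing vertex (or closing edge),'' the double count cannot deliver $O(n^{\ell})$: the number of candidate structures is genuinely $\Theta(n^{\ell+1})$. Already in the extremal graph $G_{n,k,t}$ one can choose $\ell$ disjoint edges in $\Theta(n^{\ell})$ ways and then a closing vertex adjacent to two clique vertices in $\Theta(n)$ ways, so the structure count is $\Theta(n^{\ell+1})$ while the cycle count is only $\Theta(n^{\ell})$. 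Your computation $\binom{nk/2}{\ell}\,\ell!\cdot(k-1)n=\frac{k^{\ell+1}n^{\ell}}{2^{\ell}}\cdot O(1)$ is an arithmetic slip: the left-hand side is of order $n^{\ell+1}$, since $\binom{nk/2}{\ell}\ell!=\Theta(n^{\ell})$ already. A double count in which each structure carries $O(1)$ cycles and each cycle lies in $O(1)$ structures can therefore prove nothing better than $O(n^{\ell+1})$; no bookkeeping of the constants $c_\ell, d_\ell$ can rescue the exponent.

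The missing idea --- and the route the paper actually takes --- is not to attack the proposition directly but to bound the increment $\ex(n+1,C_{2\ell+1},P_k)-\ex(n,C_{2\ell+1},P_k)$ (Claim~\ref{diffclaim2}) and telescope, as in the proof of Claim~\ref{diffclaim1}. In an extremal $(n+1)$-vertex graph one may assume $\delta(G)\le t=\lfloor (k-1)/2\rfloor$, since otherwise Dirac's argument confines every component to at most $k$ vertices and the count is linear in $n$. Taking $v$ of minimum degree, every $C_{2\ell+1}$ through $v$ splits into the $P_2$ formed by $v$ and its two cycle-neighbours --- at most $2\binom{d(v)}{2}\le 2\binom{t}{2}$ choices, and this $O(1)$-in-$n$ bound on the two neighbours is precisely the saving your closing vertex lacks --- together with a $P_{2\ell-3}$ on the remaining $2\ell-2$ vertices, whose number is controlled by the already-established Theorem~\ref{oddP}. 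This gives at most $2\binom{t}{2}\ex(n,P_{2\ell-3},P_k)=\frac{\ell k^{\ell+1}n^{\ell-1}}{2^{\ell+2}}+O(n^{\ell-2})$ cycles through $v$, and summing these increments over $n$ yields the stated bound with the correct constant. The matching-structure machinery is used in the paper only for the path theorems; for the cycle propositions it is neither needed nor, in the form you propose, adequate.
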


It is enough to prove the following claims from which the propositions above follow (a proof of this implication is included after the proof of the claim). Their proofs are similar, so we just give the proof of the first claim.  

\begin{claim}
\label{diffclaim1}
For every $k,\ell \in \mathbb{N}$, there exists $n_0\in\mathbb{N}$ such that if $n \geq n_0$,
\begin{displaymath}
\ex(n+1,C_{2\ell},P_k)-\ex(n,C_{2\ell},P_k) \leq \frac{k^{\ell}n^{\ell-1}}{2^{\ell+1}} + O(n^{\ell-2}).
\end{displaymath}
\end{claim}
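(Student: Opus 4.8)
\textbf{Proof plan for Claim \ref{diffclaim1}.}
The plan is to bound the increase $\ex(n+1,C_{2\ell},P_k)-\ex(n,C_{2\ell},P_k)$ by taking an extremal $(n+1)$-vertex $P_k$-free graph $G$ achieving $\ex(n+1,C_{2\ell},P_k)$ copies of $C_{2\ell}$, deleting a well-chosen vertex $v$, and controlling the number of copies of $C_{2\ell}$ through $v$. Since $G-v$ is an $n$-vertex $P_k$-free graph, it has at most $\ex(n,C_{2\ell},P_k)$ copies of $C_{2\ell}$, so the difference is at most $\N(C_{2\ell},G)-\N(C_{2\ell},G-v)$, which is exactly the number of copies of $C_{2\ell}$ passing through $v$. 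It therefore suffices to show that for some vertex $v$, this count is at most $\frac{k^{\ell}n^{\ell-1}}{2^{\ell+1}}+O(n^{\ell-2})$. A natural choice is to pick $v$ minimizing the number of $C_{2\ell}$'s through it; averaging, the minimum is at most $\frac{2\ell}{n+1}\N(C_{2\ell},G)$, but since we do not yet have a good absolute bound on $\N(C_{2\ell},G)$ (that is what we are building toward via Proposition~\ref{p111}), I would instead pick $v$ to have small degree or bound the count through $v$ directly and structurally.

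The key step is to count copies of $C_{2\ell}$ through a fixed vertex $v$ in a $P_k$-free graph. A copy of $C_{2\ell}$ through $v$ can be described by the two edges of the cycle at $v$ together with a $P_{2\ell-2}$ path joining the two neighbors of $v$ in $G-v$. More usefully, deleting $v$ from such a cycle leaves a copy of $P_{2\ell-2}$ in $G-v$ whose two endpoints are both neighbors of $v$. So the number of $C_{2\ell}$'s through $v$ is at most the number of copies of $P_{2\ell-2}$ in $G$ whose two endpoints lie in $N(v)$; each such path, together with $v$, closes up to a cycle (and $v \notin P_{2\ell-2}$ is automatic if we are careful, or absorbed into the error term). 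To bound this, I would adapt the matching-structure double-counting argument used in the proof of Theorem~\ref{evenP}: a $P_{2\ell-2}$ with both endpoints in $N(v)$ aligns with matching structures built from $\ell-1$ edges of $G$, but now both ends of the path are pinned to the neighborhood of $v$. Choosing $v$ of bounded degree — by the Erd\H{o}s--Gallai theorem (Theorem~\ref{eg}) the average degree is at most $k-1$, so some vertex has degree at most $k-1$ — the two endpoints of the aligning path are confined to a set of size at most $k-1$, which removes one factor of $n$ from the count and introduces the factor $k$; carrying out the bookkeeping (at most $k-1$ choices for each of two endpoint-edges, but these must be edges at $v$ or edges meeting $N(v)$, and $\ell-1$ free edges from the at most $(k-1)n/2$ edges of $G$) should yield $\binom{(k-1)n/2}{\ell-1}(\ell-1)! \cdot O(k^2)$ up to the division by $2\ell$ coming from the automorphisms of $C_{2\ell}$ (rotations and the reflection), giving the claimed main term $\frac{k^\ell n^{\ell-1}}{2^{\ell+1}}$ after simplification, with lower-order terms from $M_1^{\ell-1}$-type configurations absorbed into $O(n^{\ell-2})$ via Lemma~\ref{matching}.

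To then deduce Proposition~\ref{p111} from the claim, I would telescope: $\ex(n,C_{2\ell},P_k) = \ex(n_0,C_{2\ell},P_k) + \sum_{m=n_0}^{n-1}\bigl(\ex(m+1,C_{2\ell},P_k)-\ex(m,C_{2\ell},P_k)\bigr) \le O(1) + \sum_{m=n_0}^{n-1}\bigl(\frac{k^{\ell}m^{\ell-1}}{2^{\ell+1}}+O(m^{\ell-2})\bigr)$, and since $\sum_{m<n} m^{\ell-1} = \frac{n^\ell}{\ell}+O(n^{\ell-1})$, the sum is $\frac{k^\ell n^\ell}{\ell 2^{\ell+1}}+O(n^{\ell-1})$, as desired.

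\medskip

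\textbf{Main obstacle.} The delicate point will be getting the constant exactly right, specifically the interplay between the factor of $2\ell$ from the automorphism group of $C_{2\ell}$, the factor $k$ coming from pinning the path endpoints to a bounded-degree vertex's neighborhood, and ensuring that the ``diagonal'' edges of the aligning matching structure (the $f_i$'s) do not create a forbidden $P_k$ or an uncounted over-count. One must argue, as in Claim~\ref{firstclaim}, that each admissible matching structure aligns with at most a bounded number of the relevant paths (after excluding an $O(n^{\ell-2})$ set of degenerate structures containing small dense configurations), and that the vertex $v$ of small degree can indeed be chosen so that the endpoint restriction genuinely saves a factor of $n$; handling the case where the extremal graph has no low-degree vertex on the "right" part (e.g., all low-degree vertices being cut vertices or isolated-ish) is where the argument needs the most care, and is presumably why the claim is stated with an unspecified $n_0$ and an $O(n^{\ell-2})$ slack.
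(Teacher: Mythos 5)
Your overall skeleton is the same as the paper's — take an extremal $(n+1)$-vertex $P_k$-free graph, bound the number of copies of $C_{2\ell}$ through a carefully chosen vertex $v$, and telescope to get Proposition~\ref{p111} (your telescoping paragraph matches the paper's almost verbatim). But there is a genuine gap in the quantitative step, and it is exactly at the point you flag as "delicate." Choosing $v$ by average degree only guarantees $d(v)\le k-1$, and then the number of ways to pick the two cycle-edges at $v$ is $\binom{k-1}{2}\approx k^2/2$, whereas the stated constant requires $\binom{t}{2}\approx k^2/8$ with $t=\lfloor\frac{k-1}{2}\rfloor$; run your own bookkeeping with $D=k-1$ and the main term comes out as $\frac{k^\ell n^{\ell-1}}{2^{\ell-1}}$, four times too large. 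The paper closes this by a dichotomy you are missing: either $\delta(G)\le t$, in which case one takes $v$ of \emph{minimum} degree, or $\delta(G)>t$, in which case Dirac's classical argument forces every connected component to have at most $k$ vertices, so $\N(C_{2\ell},G)\le k^{2\ell-1}n$ and $G$ cannot be extremal for large $n$. Without that second branch you cannot assume a vertex of degree at most $t$ exists, and without degree at most $t$ the constant is wrong.

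The second, smaller issue is that your count of cycles through $v$ is both more complicated than necessary and not correctly set up. Once the cycle is anchored at $v$, deleting $v$ leaves a \emph{unique} path, so there is no "division by $2\ell$ from the automorphisms of $C_{2\ell}$"; inserting that factor would make the bound spuriously small. You also do not need to rerun the matching-structure argument with pinned endpoints (which forces you to control $\sum_{u\in N(v)}d(u)$, a quantity that can be as large as $tn$ even for a minimum-degree vertex, e.g.\ in $G_{n,k,t}$). The paper's route is to split each $C_{2\ell}$ through $v$ into the $P_2$ centered at $v$ (at most $\binom{d(v)}{2}\le\binom{t}{2}$ choices) and the $P_{2(\ell-2)}$ on the remaining $2\ell-3$ vertices, note that any such pair joins into a $C_{2\ell}$ in at most two ways, and then simply plug in the already-proven bound $\ex(n,P_{2(\ell-2)},P_k)\le\frac{k^{\ell-2}n^{\ell-1}}{2^{\ell-1}}+O(n^{\ell-2})$; this gives $2\binom{t}{2}\cdot\frac{k^{\ell-2}n^{\ell-1}}{2^{\ell-1}}\le\frac{k^{\ell}n^{\ell-1}}{2^{\ell+1}}+O(n^{\ell-2})$ with no new counting lemma needed. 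I recommend you adopt both the min-degree/Dirac dichotomy and this reuse of the even-path proposition.
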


\begin{claim}
\label{diffclaim2}
For every $k,\ell \in \mathbb{N}$, there exists $n_0\in\mathbb{N}$ such that if $n \geq n_0$,
\begin{displaymath}
\ex(n+1,C_{2\ell+1},P_k)-\ex(n,C_{2\ell+1},P_k) \leq \frac{\ell k^{\ell+1}n^{\ell-1}}{2^{\ell+2}} + O(n^{\ell-2}).
\end{displaymath}
\end{claim}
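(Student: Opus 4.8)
The plan is to mimic the (omitted) proof of Claim \ref{diffclaim1}, using an extremal $P_k$-free graph on $n+1$ vertices and deleting one carefully chosen vertex. Let $G$ be a $P_k$-free graph on $n+1$ vertices with $\N(C_{2\ell+1},G) = \ex(n+1,C_{2\ell+1},P_k)$. For any vertex $v \in V(G)$, the graph $G - v$ is $P_k$-free on $n$ vertices, so $\N(C_{2\ell+1}, G-v) \le \ex(n,C_{2\ell+1},P_k)$; hence
\begin{displaymath}
\ex(n+1,C_{2\ell+1},P_k) - \ex(n,C_{2\ell+1},P_k) \le \N(C_{2\ell+1},G) - \N(C_{2\ell+1},G-v) = \N_v(C_{2\ell+1},G),
\end{displaymath}
where $\N_v(C_{2\ell+1},G)$ denotes the number of copies of $C_{2\ell+1}$ through $v$. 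It therefore suffices to find one vertex $v$ through which few cycles $C_{2\ell+1}$ pass. The natural candidate is a vertex of small degree; since $e(G) \le (k-1)(n+1)/2$ by Theorem \ref{eg}, averaging gives a vertex $v$ with $\deg(v) \le k-1$, and we fix such a $v$.

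Next I would count copies of $C_{2\ell+1}$ through $v$. Such a cycle consists of $v$, two of its neighbors $x,y$ (at most $\binom{k-1}{2}$ choices), and an $x$--$y$ path of length $2\ell-1$ in $G - v$. The key structural input is that this path of length $2\ell-1$ behaves, for counting purposes, essentially like a $P_{2\ell-1}$ in a $P_k$-free graph, so that the number of such paths is governed by the bound behind Theorem \ref{oddP} (specifically Proposition 2 with $\ell$ replaced by $\ell-1$), giving an estimate of order $n^{\ell}$ — but we need one power of $n$ better. This is where the matching-structure machinery from the proofs of Theorems \ref{evenP}, \ref{oddP} must be reused: a path of length $2\ell-1$ from $x$ to $y$ is associated with a matching structure on $\ell$ edges (after removing $x,y$ and the two pendant edges, one has a matching of $\ell-1$ edges plus bounded leftover), and such structures not spanning a vertex set containing $M_2^{\ell-1}$ or $M_3^{\ell-1}$ have boundedly many aligned paths by the Claim in the proof of Theorem \ref{oddP}. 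Counting these matching structures in $G$ uses $\binom{(k-1)(n+1)/2}{\ell-1}(\ell-1)!$ — that is, $O(n^{\ell-1})$ rather than $O(n^\ell)$, because $x$ and $y$ are already pinned down as neighbors of the low-degree vertex $v$ and hence are not free. The degenerate matching configurations ($M_1, M_2, M_3$-type) contribute only $O(n^{\ell-2})$ by Lemma \ref{matching} applied on $n$ vertices with two fewer free vertices.

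Assembling the pieces: at most $\binom{k-1}{2}$ choices of the pair $(x,y)$, at most $\binom{(k-1)(n+1)/2}{\ell-1}(\ell-1)!$ matching structures, at most a bounded number ($\ell+1$, from the $M_2, M_3$-free analysis) of paths per structure, and division by $2$ for counting each cycle in two directions; expanding the binomial in powers of $n$ and keeping the leading term should produce exactly $\frac{\ell k^{\ell+1} n^{\ell-1}}{2^{\ell+2}} + O(n^{\ell-2})$, matching the claimed constant — the factor $\ell$ here (rather than $\ell+2$ as in Theorem \ref{oddP}) arising because, once the cycle is "cut open" at $v$, a cycle has $\ell$ distinct edge-pairs one can designate as the "$x,y$ to $v$" slot, but two of the $\ell+2$ path-types in the $P_{2\ell+1}$ count collapse. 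The main obstacle will be bookkeeping the constant precisely: verifying that the count of $C_{2\ell+1}$'s through the chosen vertex really produces the constant $\ell k^{\ell+1}/2^{\ell+2}$ and not something off by a small integer factor, since several combinatorial multiplicities (choice of entry pair, direction of traversal, position of the unique "$S$-edge" in the aligned path) interact, and one must confirm they combine to exactly cancel against the $2^{\ell+2}$ and reproduce the $\ell$. A secondary subtlety is ensuring the error terms are genuinely $O(n^{\ell-2})$ uniformly, which requires that all the degenerate-matching estimates lose a full power of $n$ relative to the main term — this follows from Lemma \ref{matching} but should be checked against the reduced number of free vertices.
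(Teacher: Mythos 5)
Your overall strategy (delete one well-chosen vertex from an extremal $(n+1)$-vertex graph and bound the cycles through it) is the right one, but there are two genuine gaps. First, the vertex selection: choosing $v$ by averaging only guarantees $d(v) \le k-1$, and the resulting factor $2\binom{k-1}{2} \approx k^2$ makes the leading constant four times too large — you would get $\frac{\ell k^{\ell+1}n^{\ell-1}}{2^{\ell}}$ instead of the claimed $\frac{\ell k^{\ell+1}n^{\ell-1}}{2^{\ell+2}}$, and the sharp constant is the whole point of the claim (it must telescope to the constant in Theorem \ref{oddC}). The paper's proof of the companion Claim \ref{diffclaim1} instead uses the dichotomy: if $\delta(G) > t = \lfloor (k-1)/2\rfloor$, then by Dirac's argument every component has at most $k$ vertices and the total cycle count is only $O(n)$; otherwise one takes $v$ of minimum degree $d(v) \le t$, so that $2\binom{d(v)}{2} \le 2\binom{t}{2} \approx k^2/4$. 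You need this step.

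Second, your counting of the cycles through $v$ is not actually carried out. You fix the two neighbours $x,y$ and try to count $x$--$y$ paths of length $2\ell-1$ with \emph{pinned endpoints} by re-running the matching-structure machinery; no such fixed-endpoint lemma is proved in the paper, and you concede that the multiplicity bookkeeping (the factor $\ell$, the directions, the position of the special edge) is unverified. The intended argument avoids all of this: split each $C_{2\ell+1}$ through $v$ into the cherry $x\,v\,y$ (at most $\binom{d(v)}{2}$ choices) and the path on the remaining $2\ell-2$ vertices, which is a $P_{2\ell-3}$ with \emph{free} endpoints; any cherry and any $P_{2\ell-3}$ can be glued into a $C_{2\ell+1}$ in at most two ways, so the number of cycles through $v$ is at most
\begin{displaymath}
2\binom{t}{2}\,\ex(n,P_{2\ell-3},P_k) \le 2\binom{t}{2}\left(\frac{\ell k^{\ell-1}n^{\ell-1}}{2^{\ell}} + O(n^{\ell-2})\right) \le \frac{\ell k^{\ell+1}n^{\ell-1}}{2^{\ell+2}} + O(n^{\ell-2}),
\end{displaymath}
by directly invoking the already-established odd-path bound (Theorem \ref{oddP} with $\ell-2$ in place of $\ell$). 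This one-line reduction replaces the entire second half of your proposal.
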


\begin{proof}[Proof of Claim \ref{diffclaim1}] %and \ref{diffclaim2}]
Let $G$ be a $P_k$-free graph on $n+1$ vertices with maximum number of copies of $C_{2\ell}$.  If $\delta(G) > t$, then by the classical argument of \cite{dirac1952some}, every connected component must have size at most $k$, and then $\N(C_{2\ell},G) \leq k^{2\ell-1}n$. So assume $\delta(G)\leq t$, and let $v$ be a vertex of minimum degree.  Then every $C_{2\ell}$ using $v$ can be divided into two paths: $v$ together with the vertex preceding it and following it in the cycle (forming a $P_2$), and the remaining $2\ell-3$ vertices (forming a $P_{2(\ell-2)}$). Note that every $P_2$ and $P_{2(\ell-2)}$ can be joined in at most two ways to make a $C_{2\ell}$; therefore, the number of copies of $C_{2\ell}$ containing $v$ is at most 
\begin{displaymath}
2{d(v) \choose 2}\ex(n,P_{2(\ell-2)},P_k) \leq 2{t \choose 2}\frac{k^{\ell-2}n^{\ell-1}}{2^{\ell-1}} + O(n^{\ell-2}) \leq \frac{k^{\ell}n^{\ell-1}}{2^{\ell+1}} + O(n^{\ell-2}). \qedhere
\end{displaymath}
\end{proof}

We include a proof that Proposition \ref{p111} follows from Claim \ref{diffclaim1}.  Similar ideas are used throughout the paper.

\begin{proof}[Proof that Claim \ref{diffclaim1} implies Proposition \ref{p111}]
We have 
\begin{align*}
\ex(n,C_{2\ell},P_k) &= \ex(n_0,C_{2\ell+1},P_k)+ \sum_{s=n_0+1}^n\left(\ex(s,C_{2\ell+1},P_k)-\ex(s-1,C_{2\ell+1})\right)\\
&\le \ex(n_0,C_{2\ell+1},P_k)+\frac{k^{\ell}}{2^{\ell+1}} \sum_{s=1}^n \left(s^{\ell-1} + O(n^{\ell-2})\right)\\
&\le \ex(n_0,C_{2\ell+1},P_k) + \frac{k^{\ell}}{2^{\ell+1}} \sum_{s=1}^n \left(\frac{(s+1)^\ell}{\ell}-\frac{s^\ell}{\ell}\right) + O(n^{\ell-1})
\\
&\le \frac{k^\ell n^{\ell}}{\ell 2^{\ell+1}} + O(n^{\ell-1}),
\end{align*}
where in the first inequality we used Claim~\ref{diffclaim1} and pulled the constant out of the sum, and the second inequality follows from $(s+1)^{\ell} = s^{\ell} + \ell s^{\ell-1} + O(s^{\ell-2})$. The final inequality follows from the telescoping sum.  
\end{proof}

%We conclude this section by presenting an alternative proof of Theorem \ref{evenP} using an eigenvalue argument.  Unfortunately, applying a similar argument in the case of odd paths does not yield the correct constant.

\section{The number of copies of $H$ in graphs without a certain tree}
\label{treefree}

Alon and Shikhelman, while considering the case when $H$ is a bipartite graph and $T$ is a tree, mention that $\ex(n,H,T) = O(n^{\alpha(H)})$ is a consequence of a theorem from \cite{alon1981number}. We prove that, in fact, this  holds for general graphs $H$. 

\begin{theorem} \label{general} Let $H$ be any graph and let $T$ be any tree, then $\ex(n,H,T) = O(n^{\alpha(H)}).$
\end{theorem}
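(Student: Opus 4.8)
The plan is to derive the bound from the observation that a $T$-free graph is sparse in a strong, hereditary sense, and then to count copies of $H$ against a degeneracy ordering of such a graph. \emph{Step 1 (degeneracy of $T$-free graphs).} Suppose $T$ has $s$ edges, so $v(T)=s+1$. I would first invoke the standard greedy-embedding fact that every graph with minimum degree at least $s$ contains every tree with $s$ edges as a subgraph: embed the vertices of $T$ one by one in a breadth-first order rooted at an arbitrary vertex, noting that when a new vertex is embedded it is the child of some already-embedded vertex $u$ whose image has at least $s$ neighbours, while fewer than $s$ vertices besides that image have been used, so a free neighbour is available to host the child. Since $T$-freeness is inherited by subgraphs, every subgraph of a $T$-free graph $G$ contains a vertex of degree at most $s-1$; that is, $G$ is $d$-degenerate for $d:=s-1=v(T)-2$. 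I would then fix a degeneracy ordering $\prec$ of $V(G)$, so that every vertex has at most $d$ neighbours preceding it in $\prec$.

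\emph{Step 2 (counting copies in a $d$-degenerate graph).} Since $\N(H,G)$ is at most the number of injective homomorphisms $\phi\colon V(H)\to V(G)$, it suffices to bound the latter. I would classify such maps by the $\prec$-order they induce on the images: for each of the $v(H)!$ linear orderings $x_{\sigma(1)},\dots,x_{\sigma(h)}$ of $V(H)$ (write $h=v(H)$), let $N_\sigma$ count the $\phi$ with $\phi(x_{\sigma(1)})\succ\cdots\succ\phi(x_{\sigma(h)})$, so the total is $\sum_\sigma N_\sigma$. To bound a single $N_\sigma$, reveal the images $\phi(x_{\sigma(1)}),\phi(x_{\sigma(2)}),\dots$ in that order, and call $x_{\sigma(i)}$ \emph{free} if it has no $H$-neighbour among $x_{\sigma(1)},\dots,x_{\sigma(i-1)}$. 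A free vertex contributes a factor of at most $n$; a non-free vertex $x_{\sigma(i)}$ has some $H$-neighbour $x_{\sigma(j)}$ with $j<i$, and since $\phi(x_{\sigma(j)})\succ\phi(x_{\sigma(i)})$, the image $\phi(x_{\sigma(i)})$ must be one of the at most $d$ $\prec$-predecessor-neighbours of the already-chosen vertex $\phi(x_{\sigma(j)})$, a factor of at most $d$. The key point is that the set of free vertices is independent in $H$: if $x_{\sigma(i)}$ and $x_{\sigma(j)}$ with $i<j$ were adjacent and both free, then $x_{\sigma(j)}$ would have the earlier neighbour $x_{\sigma(i)}$. Hence at most $\alpha(H)$ vertices are free, so $N_\sigma\le n^{\alpha(H)}d^{\,h-\alpha(H)}$, and summing over all $v(H)!$ orderings gives $\N(H,G)\le v(H)!\,d^{\,v(H)-\alpha(H)}n^{\alpha(H)}=O(n^{\alpha(H)})$, with implied constant depending only on $H$ and $T$.

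\emph{Main obstacle.} Step 1 is routine folklore. The substance is in Step 2, and the idea I expect to be the crux is to sum over all $v(H)!$ vertex-orderings of $H$ rather than attempting to fix a single canonical one: only after conditioning on the $\prec$-order of the images does the degeneracy orientation of $G$ force the expensive (``free'') vertices to form an independent set, which is precisely what replaces the trivial exponent $v(H)$ by $\alpha(H)$. A secondary point to be careful about is the direction of the comparison $\phi(x_{\sigma(j)})\succ\phi(x_{\sigma(i)})$ — it is because the earlier-indexed image is later in $\prec$ that the later-chosen image is a back-neighbour with only $d$ options.
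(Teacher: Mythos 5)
Your proof is correct, but it takes a genuinely different route from the paper's. The paper proves the theorem by induction on $\alpha(H)$: it removes a vertex $v$ of minimum degree (which is less than $v(T)$ because $G$ is $T$-free), notes that in any copy of $H$ through $v$ the vertex $v$ plays the role of some $u_i\in V(H)$ whose $H$-neighbours must be embedded in $N(v)$, and that the remainder of the copy is a copy of $H_i=H-N_H[u_i]$, a graph with $\alpha(H_i)\le\alpha(H)-1$; this yields $\ex(n+1,H,T)-\ex(n,H,T)=O(n^{\alpha(H)-1})$ and the theorem follows by summing over $n$. You instead pass to a degeneracy ordering of the whole host graph and run a single global count, summing over the $v(H)!$ possible $\prec$-orders of the images and using that the ``free'' vertices form an independent set of $H$. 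Both arguments rest on the same elementary fact --- every subgraph of a $T$-free graph has a vertex of degree at most $v(T)-2$ --- and both charge the non-independent part of $H$ to bounded-degree choices, but yours avoids the nested induction (on $\alpha(H)$ and on $n$) and produces the explicit constant $v(H)!\,(v(T)-2)^{v(H)-\alpha(H)}$, whereas the paper's vertex-removal recursion is the template it later sharpens to get exact results for specific $H$ in Section 4, so it earns its keep there. Two minor points to note, neither a gap: your inequality $N_\sigma\le n^{f}d^{\,h-f}\le n^{\alpha(H)}d^{\,h-\alpha(H)}$ uses $n\ge d$ in the last step, which is harmless since for $n<d$ everything is $O(1)$; and bounding copies by injective homomorphisms only costs the factor $\lvert\mathrm{Aut}(H)\rvert$, which is absorbed into the constant.
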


\begin{corollary}\label{corcor}
For any graph $H$ such that $v(H) - \alpha(H) \leq \lfloor \frac{k-1}{2} \rfloor$, we have $\ex(n,H,P_k) = \Theta(n^{\alpha(H)})$. 
\end{corollary}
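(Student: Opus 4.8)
The plan is to prove Corollary~\ref{corcor} by combining the upper bound of Theorem~\ref{general} with a matching lower bound coming from an explicit construction. The upper bound is immediate: since $P_k$ is a tree, Theorem~\ref{general} gives $\ex(n,H,P_k) = O(n^{\alpha(H)})$ with no hypothesis on $H$ needed. So the entire content of the corollary is the lower bound, and the hypothesis $v(H) - \alpha(H) \le \lfloor \frac{k-1}{2}\rfloor = t$ is there precisely to make a suitable construction $P_k$-free.

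First I would fix a maximum independent set $I$ in $H$, so that $|V(H)\setminus I| = v(H) - \alpha(H) \le t$. The idea is to ``blow up'' the non-independent part into a clique that is joined to a large independent set, mimicking the graph $G_{n,k,t}$ that serves as the extremal construction throughout Section~\ref{asy}. Concretely I would take a clique $S$ on $t$ vertices and an independent set $U$ on $n-t$ vertices, with all edges between $S$ and $U$; this graph is $P_k$-free (as already noted in the excerpt, it is a subgraph of $G_{n,k,t}$, or one checks directly that any path alternates in and out of $S$ and so has at most $2t+1 \le k$ vertices). Then I would exhibit $\Omega(n^{\alpha(H)})$ copies of $H$ inside it: embed the $v(H)-\alpha(H) \le t$ vertices of $V(H)\setminus I$ injectively into $S$ (possible since $|S| = t$), and embed the $\alpha(H)$ vertices of $I$ into distinct vertices of $U$. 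Since $U$ is independent and $I$ is independent in $H$, no edge of $H$ lies within $I$; every edge of $H$ has at least one endpoint outside $I$, hence maps to $S$, and all $S$--$S$ and $S$--$U$ edges are present, so this is a genuine (noninduced) copy of $H$. The number of such embeddings is at least $\binom{n-t}{\alpha(H)} \cdot (\text{number of ways to place } V(H)\setminus I \text{ in } S) = \Omega(n^{\alpha(H)})$.

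Combining the two bounds gives $\ex(n,H,P_k) = \Theta(n^{\alpha(H)})$, as claimed. I do not expect any serious obstacle here: the only points requiring a line of care are (i) checking the construction is $P_k$-free, which follows from the alternation observation and $2t+1 \le k$, and (ii) making sure the embedding of $V(H)\setminus I$ into $S$ can indeed be done injectively, which is exactly where the hypothesis $v(H)-\alpha(H) \le t$ is used. If one wants the leading constant one could count embeddings more carefully, but for a $\Theta$ statement the crude count suffices. The main ``obstacle,'' such as it is, is purely bookkeeping: being careful that we are counting labelled copies consistently with the definition of $\N(H,G)$ and that automorphisms of $H$ only change things by a bounded factor, which is irrelevant to the order of magnitude.
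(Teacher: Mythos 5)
Your proof is correct and follows essentially the same route as the paper: the upper bound is quoted from Theorem~\ref{general}, and the lower bound comes from the same construction $G_{n,k,t}$ (a $t$-clique completely joined to an independent set), embedding a maximum independent set of $H$ into the large independent part and the remaining at most $t$ vertices into the clique. The paper states this construction in one sentence; your write-up just spells out the same verification in more detail.
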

A construction yielding the lower bound in Corollary \ref{corcor} is $G_{n,k,t}$. Indeed, for every subset of size $\alpha(H)$ of the independent set in $G_{n,k,t}$ we can find a copy of $H$ by joining the $t = \floor{\frac{k-1}{2}}$ vertices involved in the clique in $G_{n,k,t}$. 

Theorem \ref{general} follows as a simple consequence of the following lemma which will be proven by induction on $\alpha(H)$.

%We will prove this Theorem \ref{general} by induction on $\alpha(H)$ together with the following lemma.

\begin{lemma}
\label{dif}
For any graph $H$ and any tree $T$, \begin{displaymath}\ex(n+1,H,T) - \ex(n,H,T) = O(n^{\alpha(H)-1}).\end{displaymath} Here, the constant given by the $O$ notation depends only on $H$ and $T$.
\end{lemma}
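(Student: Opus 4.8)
The plan is to induct on $\alpha(H)$; the base case $\alpha(H)=1$ (so $H$ is a clique) and the inductive step use essentially the same computation. The one structural input is that every $T$-free graph is $(v(T)-2)$-degenerate: if some subgraph had minimum degree at least $v(T)-1$, then $T$ would embed into it greedily, processing the vertices of $T$ in an order in which each new vertex has exactly one earlier neighbour. Hence an extremal $T$-free graph $G$ on $n+1$ vertices with $\N(H,G)=\ex(n+1,H,T)$ has a vertex $v$ with $d(v)\le d:=v(T)-2$, a constant depending only on $T$.

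First I would write $\ex(n+1,H,T)=\N(H,G)=\N(H,G-v)+N_v$, where $N_v$ counts the copies of $H$ in $G$ that use $v$; since $G-v$ is $T$-free on $n$ vertices we have $\N(H,G-v)\le\ex(n,H,T)$, so it suffices to prove $N_v=O(n^{\alpha(H)-1})$. It is convenient here to count labelled copies (injective edge-preserving maps $V(H)\to V(G)$), which differ from $\N(H,G)$ only by the bounded factor $|\mathrm{Aut}(H)|$. In a labelled copy of $H$ through $v$, the vertex $v$ is the image of a unique $u\in V(H)$; the neighbours of $u$ in $H$ are mapped injectively into $N(v)$, so there are at most $d^{|N_H(u)|}=O(1)$ choices for their images, and once $u$ and these images are fixed, the remaining vertices $W:=V(H)\setminus(\{u\}\cup N_H(u))$ --- which are non-adjacent to $u$ --- are embedded into $V(G)\setminus\{v\}$; two labelled copies through $v$ with the same $u$ and the same images of $N_H(u)$ that also agree on $W$ coincide, so the number of completions is at most the number of labelled copies of $H[W]=H-u-N_H(u)$ in $G$. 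Summing over the constantly many choices of $u$,
\[
N_v \;\le\; \sum_{u\in V(H)} d^{|N_H(u)|}\,\big|\mathrm{Aut}(H-u-N_H(u))\big|\;\N\big(H-u-N_H(u),\,G\big)\;\le\; C\sum_{u\in V(H)}\ex\big(n+1,\,H-u-N_H(u),\,T\big),
\]
with $C=C(H,T)$.

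The crucial observation is that $\alpha(H-u-N_H(u))\le\alpha(H)-1$, since $u$ together with any independent set of $H-u-N_H(u)$ is an independent set of $H$. Thus every graph $H-u-N_H(u)$ has strictly smaller independence number than $H$, so by the inductive hypothesis Lemma~\ref{dif} holds for it; summing those differences from a fixed starting value $n_0$ (exactly the deduction of Theorem~\ref{general}) gives $\ex(m,H',T)=O(m^{\alpha(H')})$ whenever $\alpha(H')<\alpha(H)$. Applying this to each $H-u-N_H(u)$ yields $\ex(n+1,H-u-N_H(u),T)=O(n^{\alpha(H)-1})$, hence $N_v=O(n^{\alpha(H)-1})$ and therefore $\ex(n+1,H,T)-\ex(n,H,T)=O(n^{\alpha(H)-1})$, with implied constant depending only on $H$ and $T$. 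In the base case $\alpha(H)=1$ every $H-u-N_H(u)$ is the empty graph, $\N(\varnothing,G)=1$, and the same chain gives $N_v=O(1)=O(n^0)$.

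The main obstacle is not a genuine difficulty but the bookkeeping in the middle step: one must argue cleanly that, having fixed the vertex $u$ mapping to $v$ and the images of its neighbours, the number of ways to complete the copy is at most the number of copies of $H-u-N_H(u)$ in $G$, and that this reduced graph really has smaller independence number --- both coming down to the single fact that $u$ has no neighbour in $W$. The remaining work is summation of the $O(\cdot)$ estimates together with the harmless translation between labelled and unlabelled copies.
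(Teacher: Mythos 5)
Your proof is correct and follows essentially the same route as the paper's: induct on $\alpha(H)$, take a vertex $v$ of bounded degree in an extremal $T$-free graph, classify the copies of $H$ through $v$ by which vertex $u$ maps to $v$, and bound the completions by copies of $H-u-N_H(u)$, whose independence number drops by at least one (the paper's Proposition~\ref{simpleprop}). The only differences are cosmetic: you invoke degeneracy rather than just $\delta(G)<v(T)$, you track labelled copies and automorphisms explicitly, and you fold the clique base case into the same computation instead of treating it separately.
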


We start by proving the following well-known fact.

\begin{proposition}
\label{simpleprop}
Let $H$ be a graph, and let $u$ be a vertex of $H$. If $H'$ is the graph obtained by removing $u$ together with its neighborhood, then $\alpha(H') \leq \alpha(H)-1.$ 
\end{proposition}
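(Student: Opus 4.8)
The plan is to exhibit, from any independent set of $H'$, an independent set of $H$ that is strictly larger, by adding back the vertex $u$. Concretely, let $I \subseteq V(H')$ be a maximum independent set of $H'$, so $\abs{I} = \alpha(H')$. Since $H'$ was obtained from $H$ by deleting $u$ and all of its neighbors, we have $u \notin I$ and no vertex of $I$ is a neighbor of $u$ in $H$. Moreover $I$ is still independent when viewed inside $H$, because $H'$ is an induced subgraph of $H$ and $I \subseteq V(H')$.

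Next I would check that $I \cup \{u\}$ is an independent set in $H$. The only new pairs to verify are those of the form $\{u, w\}$ with $w \in I$, and none of these is an edge of $H$ precisely because every neighbor of $u$ was removed in passing to $H'$, so no $w \in I$ is adjacent to $u$. Hence $I \cup \{u\}$ is independent in $H$ and has size $\abs{I} + 1 = \alpha(H') + 1$.

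Finally, by the definition of the independence number, $\alpha(H) \ge \abs{I \cup \{u\}} = \alpha(H') + 1$, which rearranges to $\alpha(H') \le \alpha(H) - 1$, as desired.

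There is essentially no obstacle here; the only point requiring a word of care is the observation that an independent set of an induced subgraph remains independent in the ambient graph, and that deleting the \emph{closed} neighborhood of $u$ (rather than just $u$) is exactly what guarantees $u$ can be safely adjoined. This is why the statement is flagged as well-known and is used only as a convenient stepping stone toward Lemma~\ref{dif}.
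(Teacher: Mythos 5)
Your proof is correct and follows exactly the same argument as the paper: take a maximum independent set of $H'$, observe that no vertex of it is adjacent to $u$ since the entire neighborhood of $u$ was deleted, and adjoin $u$ to obtain a larger independent set in $H$. No issues.
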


\begin{proof}
If $X$ is a maximal independent set in $H'$, then since no neighbor of $u$ is in $X$, the set $X\cup\{u\}$ is independent in $H$ and so $\alpha(H') + 1 \leq \alpha(H).$
\end{proof}

We are now ready to prove Lemma \ref{dif}.

\begin{proof}[Proof of Lemma \ref{dif}]
For the base case of the induction, note that if $\alpha(H) = 1$, then $H$ is a clique and it is  easy to see that $\ex(n,K_s,T) = O(n)$ for any $s$ and $T$. (We may, for example, use the simple bound of $\ex(n,T) \le v(T) n$, for any tree $T$, and apply an induction argument similar to the proof of Theorem \ref{luo}.)

 To estimate $\ex(n+1,H,T) - \ex(n,H,T)$, we will start with a graph $G$ on $n+1$ vertices which is $T$-free with maximum number of copies of $H$. We know that $\delta(G)  < v(T)$, otherwise $T \subseteq G$. Let $v$ be a vertex of minimum degree in $G$, and we will count the number of copies of $H$ in $G$ containing $v$ as a vertex.

Let $V(H) = \{u_1,u_2,\dots,u_{v(H)}\}$, and let $H_i$ be the graph obtained by removing $u_i$  together with its neighbors. By Proposition \ref{simpleprop}, we know that $\alpha(H_i) \leq \alpha(H)-1$.  Now for each copy of $H$ using $v$ as a vertex, $v$ must play the role of some $u_i$, and the neighbors of $u_i$ must be embedded in the neighborhood of $v$.  Then the other vertices of $H$, that is the vertices of $H_i$, must be embedded in some way in the remaining vertices of $G$.  We have to choose $d_H(u_i)$ vertices in $N(v)$, so the number of copies of $H$ using $v$ is at most 
\begin{displaymath}
\sum_{i=1}^{v(H)} d(v)^{d_H(u_i)}\N(H_i,G) \leq \sum_{i=1}^{v(H)} v(T)^{d_H(u_i)}\N(H_i,G) = \sum_{i=1}^{v(H)}O_{{H_i}}(n^{\alpha(H_i)}) = O(n^{\alpha(H)-1}).  
\end{displaymath}

Thus, if $G'$ is the graph obtained from $G$ by removing $v$, we have that 
\[\ex(n+1,H,T) = \N(H,G) = \N(H,G') + O(n^{\alpha(H)-1}) \leq \ex(n,H,T) + O(n^{\alpha(H)-1}).\] \qedhere 
\end{proof}

%Now Theorem \ref{general} is a simple consequence of the recursion given by the lemma. 

For some particular graphs $H$, by studying more carefully the number of copies of $H$ that use some fixed vertex, we can find a better recursion than the one from Lemma \ref{dif}. In the following section, we improve the recursion for several specific classes of graphs.  For these graphs we will find an integer valued function $f(n)$ which is a lower bound of the extremal number $\ex(n,H,T)$, such that $f(n)$ grows faster than $\ex(n,H,T)$ (when they do not agree). Since both functions are integer valued they must coincide eventually.

\section{Exact Results}
\label{exact}
We now turn our attention to proving some exact results. Recall that we are using the notation $t=\lfloor \frac{k-1}{2} \rfloor$. 
\subsection{Number of copies of $C_4$}
We begin by determining the maximal number of copies of $C_4$ in a $P_k$-free graph.
\begin{theorem}
\label{c4pk}
For every integer $k\geq 5$, there exists $n_1 \in \mathbb{N}$ such that if $n \geq n_1$,

%For $n\geq n_0$
\begin{displaymath}
\ex(n,C_4,P_k) = \N(C_4,G_{n,k,t})  ={n-t \choose 2}{t\choose 2} + 3(n-t){t \choose 3} + 3{t \choose 4} + 2\eta_k{t\choose 2},
\end{displaymath}
where $\eta_k = 1$, if $k$ is even, and 0 otherwise. Moreover, the only extremal graph is $G_{n,k,t}$.

\end{theorem}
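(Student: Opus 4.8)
The plan is to prove both the exact value and the uniqueness by induction on $n$, combining a reduction to bounded minimum degree with a count of the four-cycles through a vertex of minimum degree.

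First I would verify the lower bound by counting $C_4$'s directly in $G_{n,k,t}$. Write $S$ for the clique of size $t$, which is completely joined to the (almost) independent set $W$ of size $n-t$; when $k$ is even $W$ carries one extra edge. A copy of $C_4$ is determined by a $4$-set of vertices together with a Hamilton cycle on the induced subgraph, so one classifies $4$-sets by how many vertices fall in $S$: a $4$-set with two vertices in each part induces $K_4$ minus an edge (one $C_4$), unless the two vertices in $W$ span the extra edge, in which case it induces $K_4$ (three $C_4$'s); a $4$-set with one vertex in $W$ and three in $S$, or all four in $S$, induces $K_4$ (three $C_4$'s); $4$-sets with three or four vertices in $W$ span no $C_4$. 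Summing gives $\binom{n-t}{2}\binom{t}{2}+3(n-t)\binom{t}{3}+3\binom{t}{4}+2\eta_k\binom{t}{2}$, because for $k$ even there are exactly $\binom{t}{2}$ bad $4$-sets of the first type, each contributing two extra cycles. Since $G_{n,k,t}$ is $P_k$-free, this is a lower bound for $\ex(n,C_4,P_k)$.

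For the upper bound, let $G$ be a $P_k$-free graph on $n$ vertices attaining $\ex(n,C_4,P_k)$. I would first argue that $\delta(G)\le t$: if $\delta(G)\ge t+1$, then by the classical fact that a connected graph with minimum degree $d$ contains a path with $\min\{2d,\text{order}-1\}$ edges, every component on more than $2t+2$ vertices would contain a path with $2t+2\ge k$ edges, a contradiction; hence all components have at most $2t+2$ vertices and $\N(C_4,G)=O_k(n)$, which for large $n$ is smaller than $\N(C_4,G_{n,k,t})=\Theta(n^2)$. So fix $v$ with $d(v)=\delta(G)\le t$. A $C_4$ through $v$ consists of two neighbours $x,y$ of $v$ together with a fourth vertex adjacent to both $x$ and $y$ and different from $v$, so the number of such copies is $\sum_{\{x,y\}\subseteq N(v)}\bigl|N(x)\cap N(y)\setminus\{v\}\bigr|\le\binom{d(v)}{2}(n-3)\le\binom{t}{2}(n-3)$. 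Deleting $v$ gives $\ex(n,C_4,P_k)=\N(C_4,G)\le\ex(n-1,C_4,P_k)+\binom{t}{2}(n-3)$. A short calculation using $3\binom{t}{3}=(t-2)\binom{t}{2}$ shows $\N(C_4,G_{n,k,t})-\N(C_4,G_{n-1,k,t})=\binom{t}{2}(n-3)$, so the nonnegative integer sequence $\phi(n):=\ex(n,C_4,P_k)-\N(C_4,G_{n,k,t})$ is nonincreasing for all large $n$, hence eventually constant.

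It remains to show the eventual value of $\phi$ is $0$ and to identify the extremal graph; this is the crux. Suppose $\phi$ is stationary from some point on. Then for large $n$ every inequality above is tight: $G-v$ is extremal, $d(v)=t$, and every pair $\{x,y\}\subseteq N(v)$ has $n-3$ common neighbours other than $v$, i.e.\ each vertex of $N(v)$ is adjacent to everything except possibly the other vertices of $N(v)$. Using $P_k$-freeness I would upgrade this (with a bit more care when $t=2$) to: $S:=N(v)$ is a clique of $t$ universal vertices, and the graph $H$ induced on $V(G)\setminus(S\cup\{v\})$ has at most $\eta_k$ edges — an edge of $H$ (resp., when $k$ is even, two independent edges or a path with two edges in $H$) together with alternating detours through the $t$ universal vertices of $S$ yields a path with at least $k$ edges. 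Recomputing $\N(C_4,G)$ for a graph of this shape exactly as in the lower-bound count gives $\binom{n-t}{2}\binom{t}{2}+3(n-t)\binom{t}{3}+3\binom{t}{4}+2e(H)\binom{t}{2}$, which is at most $\N(C_4,G_{n,k,t})$ with equality if and only if $e(H)=\eta_k$, in which case $G$ is exactly $G_{n,k,t}$. Since $G$ is extremal this forces $e(H)=\eta_k$ and hence $\phi(n)=0$, contradicting that $\phi$ is a positive constant; therefore $\phi(n)=0$ for all large $n$. Running the same equality analysis once more for such $n$ shows that the only extremal graph is $G_{n,k,t}$. The main obstacle I anticipate is precisely this last step: converting the "$n-3$ common neighbours'' condition into genuine universality and a clique structure on $N(v)$, and showing only boundedly many edges survive outside $S\cup\{v\}$, requires a careful analysis of how short paths combine with detours through $S$, with bookkeeping that depends on the parity of $k$ (the extra edge in $G_{n,k,t}$); the remaining ingredients — the direct $C_4$-counts and the telescoping — are routine.
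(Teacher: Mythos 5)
Your proposal is correct and follows essentially the same route as the paper: reduce to $\delta(G)\le t$ via Dirac's argument, bound the number of $C_4$'s through a minimum-degree vertex by $\binom{t}{2}(n-3)$, note that this matches the increment $\N(C_4,G_{n,k,t})-\N(C_4,G_{n-1,k,t})$, and finish by monotonicity of the nonnegative integer sequence $\ex(n,C_4,P_k)-\N(C_4,G_{n,k,t})$ together with an equality analysis of the minimum-degree vertex. The $t=2$ subtlety you flag (tightness only forces each neighbour of $v$ to be adjacent to everything except possibly the other neighbour, so the clique structure on $N(v)$ needs a separate argument) is present but glossed over in the paper's own proof, which simply asserts that the neighbours of $v$ have full degree; you are, if anything, slightly more explicit about that step.
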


To prove Theorem \ref{c4pk}, we will prove the following claim from which the theorem follows simply by induction on $n$.

%\begin{claim}
%There exists $n_0\in\mathbb{N}$ such that if $n \geq n_0$, then either $\ex(n+1,C_4,P_k) = \N(C_4,G_{n,k,t})$ or $\ex(n+1,C_4,P_k) - \ex(n,C_4,P_k) <  {t \choose 2}(n-2)$.
%\end{claim}

\begin{claim}
\label{clxyz}
There exists $n_0\in\mathbb{N}$ such that if $n \geq n_0$, $\ex(n+1,C_4,P_k) - \ex(n,C_4,P_k) \leq  {t \choose 2}(n-2)$. Equality can hold for this difference only if the unique extremal graph with $n+1$ vertices is $G_{n+1,k,t}$.
\end{claim}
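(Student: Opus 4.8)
The plan is to run the minimum-degree-vertex deletion scheme already used for Claim~\ref{diffclaim1} and Lemma~\ref{dif}. Fix a large $n$ and let $G$ be a $P_k$-free graph on $n+1$ vertices with $\N(C_4,G)=\ex(n+1,C_4,P_k)$. First I would reduce to the case $\delta(G)\le t$: a graph of minimum degree at least $k$ contains $P_k$ (greedily extend a longest path), so if $\delta(G)>t$ then $t<\delta(G)<k$ and, by the classical Dirac argument used in the proof of Claim~\ref{diffclaim1}, every component of $G$ has bounded size (at most $2k$, say); hence $\N(C_4,G)=O_k(n)$, which is impossible for large $n$ since $G_{n+1,k,t}$ is $P_k$-free and $\N(C_4,G_{n+1,k,t})=\Theta(n^2)$.

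So pick $v$ with $d:=d(v)=\delta(G)\le t$. Every $C_4$ through $v$ is uniquely $v\,a\,b\,c$ with $\{a,c\}\subseteq N(v)$ and $b\in(N(a)\cap N(c))\setminus\{v\}$; since $v\in N(a)\cap N(c)\subseteq V(G)\setminus\{a,c\}$, there are at most $n-2$ choices of $b$ for each of the $\binom{d}{2}$ pairs $\{a,c\}$. Hence at most $\binom{d}{2}(n-2)\le\binom{t}{2}(n-2)$ copies of $C_4$ pass through $v$, and deleting $v$ leaves a $P_k$-free graph on $n$ vertices, so
\[
\ex(n+1,C_4,P_k)=\N(C_4,G)\le\ex(n,C_4,P_k)+\binom{t}{2}(n-2).
\]

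For the equality case, suppose the difference equals $\binom{t}{2}(n-2)$ and let $G$ be an arbitrary extremal graph on $n+1$ vertices. Then both inequalities above are tight. Since $k\ge5$ forces $t\ge2$, tightness of $\binom{d}{2}\le\binom{t}{2}$ gives $d(v)=t$, and then tightness in the count through $v$ gives $N(a)\cap N(c)=V(G)\setminus\{a,c\}$ for every pair from $S:=N(v)$. Reading these identities over all pairs shows $S$ induces a clique on $t$ vertices and every vertex off $S$ is adjacent to all of $S$; that is, $G$ is the join of this $K_t$ with $H':=G[V(G)\setminus S]$. If $H'$ had more than $\eta_k$ edges, then, since $H'$ is large, alternating a path through the $t$ vertices of $S$ while also using the available edges of $H'$ would produce a path with at least $k$ edges in $G$, a contradiction; hence $H'$ has at most $\eta_k$ edges, and exactly $\eta_k$ by comparison with the closed formula for $\N(C_4,G_{n+1,k,t})$. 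A graph on $n+1-t$ vertices with exactly $\eta_k$ edges is unique up to isomorphism, so $G\cong G_{n+1,k,t}$. (To deduce that $S$ is a clique when $t=2$, and more smoothly for all $t$, one can instead note that tightness also gives $\N(C_4,G-v)=\ex(n,C_4,P_k)$, apply the $n$-vertex case of Theorem~\ref{c4pk} — available from the induction proving it — to get $G-v=G_{n,k,t}$, and then observe that $N(v)$ must be the set of $t$ vertices of degree $n-1$ in $G_{n,k,t}$.)

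The double count is the routine part. The main obstacle is the equality analysis: converting the common-neighbourhood identities into the join structure (with a little extra care when $t=2$), and the Erd\H{o}s--Gallai-type observation that one edge too many inside $V(G)\setminus S$ already forces a $P_k$, which is exactly where the hypothesis that $n$ is large is genuinely used (the alternating path needs room to traverse all $t$ vertices of $S$); checking that the $\eta_k$ edge is present rather than absent in the optimum is a short computation with the closed formula for $\N(C_4,G_{n,k,t})$.
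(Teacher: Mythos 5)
Your argument is in essence the paper's own proof of Claim~\ref{clxyz}: the same Dirac-style reduction to $\delta(G)\le t$, the same count of at most $\binom{d(v)}{2}(n-2)$ copies of $C_4$ through a minimum-degree vertex (each such copy being a pair from $N(v)$ plus a fourth vertex among the $n-2$ remaining ones), and the same reading of the equality conditions to obtain the join of a $K_t$ with an almost-independent set carrying at most $\eta_k$ edges. The inequality half of the claim is complete and correct as you wrote it, and your equality analysis is if anything more explicit than the paper's, which simply asserts that the neighbours of $v$ ``have full degree.''

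The one genuine gap is the point you flag yourself. When $t\ge 3$ the identities $N(a)\cap N(c)=V(G)\setminus\{a,c\}$, read over two pairs containing $a$, do force $d(a)=n$ and hence the clique on $N(v)$; but when $t=2$ they only say that the two neighbours $a,c$ of $v$ are adjacent to everything except possibly each other, and your fallback --- invoking the $n$-vertex case of Theorem~\ref{c4pk} to conclude $G-v=G_{n,k,t}$ --- is circular: that theorem, including uniqueness on $n$ vertices, is obtained from Claim~\ref{clxyz} by the very induction into which you would be feeding this step, and there is no independent base case for the uniqueness. The gap is not merely cosmetic. For $k=5$ the graphs $K_{2,n-1}$ and $G_{n+1,5,2}$ contain exactly the same number of copies of $C_4$ (the clique edge inside the class of size $2$ lies in no $C_4$ when the other class is independent), so no counting argument can force that edge; the paper's own proof elides precisely this point. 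For $k=6$ one can repair it by noting that maximality forces the $\eta_k$ edge $b_1b_2$ in the large class first, after which omitting the edge $ac$ costs $2\binom{t}{2}$ copies of $C_4$; for $k=5$ the uniqueness assertion itself has to be read with care. So: correct and faithful to the paper for $t\ge3$, but the $t=2$ equality case still needs an argument that is neither the common-neighbourhood identities nor an appeal to the theorem being proved.
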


It is easy to see that $\N(C_4,G_{n+1,k,t}) = \N(C_4,G_{n,k,t}) +  {t \choose 2}(n-2)$.  By Claim \ref{clxyz}, $\ex(n+1,C_4,P_k) \le  \ex(n,C_4,P_k) + {t \choose 2}(n-2)$ with equality only if the unique extremal graph with $n+1$ vertices is $G_{n+1,k,t}$. It follows that $\ex(n+1,C_4,P_k) - \N(C_4,G_{n+1,k,t}) \le \ex(n,C_4,P_k)-\N(C_4,G_{n,k,t})$ and so the sequence $\ex(n,C_4,P_k)-\N(C_4,G_{n,k,t})$ is a non-increasing sequence of non-negative integers that is strictly decreasing after every non-zero term.   Thus, this sequence is eventually the constant 0 sequence, and hence, moreover, $G_{n,k,t}$ is eventually the unique extremal graph.  
 
We now prove Claim \ref{clxyz}.  
%Since $\N(C_4,G_{n+1,k,t}) = \N(C_4,G_{n,k,t}) +  {t \choose 2}(n-2)$, this claim implies that \\ $\ex(n+1,C_4,P_k) - \ex(n,C_4,P_k) \leq  {t \choose 2}(n-2)$, then  $\ex(n,C_4,P_k) - N(C_4,G_{n,k,t})$ is a non increasing sequence of which by definition of $\ex(n,C_4,P_k)$ consist of 
%non negative integers % it cannot decrease forever,
%that is strictly deceasing for non zero terms, hence eventually zero.  Note that the claim implies that if $\ex(n_0,C_4,P_k) - N(C_4,G_{n_0,k,t})$ the for every $n > n_0$ the only possible extremal graph is $G_{n,k,t}$.
%Note that when $\ex(n,C_4,P_k) = \N(C_4,G_{n,k,t})$, then $\ex(n+1,C_4,P_k) \geq% \N(C_4,G_{n+1,k,t}) = \N(C_4,G_{n,k,t}) +  {t \choose 2}(n-2) = \ex(n,C_4,P_k) +  {t \choose 2}(n-2)$, so inductively for every $m\geq n$ we will be in the equality case and so $G_{m,k,t}$ will be the only extremal graph, hence Theorem \ref{c4pk} is proven. 

%As in the lemma from the previous section and similarly in the following claims, it enough to prove that either $\ex(n+1,C_4,P_k) = \N(C_4,G_{n+1,k,t})$ or there is one vertex $v$ that is in less than ${t \choose 2}(n-2)$ copies of $C_4$. 

\begin{proof}
Let $G$ be a $P_k$-free graph on $n+1$ vertices with the maximum number of copies of $C_4$; that is, $\N(C_4,G) = \ex(n+1,C_4,P_k)$.

If $\delta(G) > t$, then by the classical argument of  \cite{dirac1952some}, every connected component must have size at most $k$, and therefore $N(C_4,G) \leq \frac{n+1}{k}3\binom{k}{4} = \frac{(n+1)(k-1)(k-2)(k-3)}{8}$. Then we can choose $n_0$ so that this number is less than $\N(C_4,G_{n,k,t})$ for $n \geq n_0$. Thus, we can assume $\delta(G) \leq t.$

Let $v$ be a vertex of minimum degree. By removing $v$, we are removing at most ${d(v) \choose 2}(n-2) \leq {t \choose 2}(n-2)$ copies of $C_4$.  Equality can hold only if $d(v) = t$ and the neighbors of $v$ have full degree. It follows that if equality holds, then $G$ contains a complete bipartite graph with color classes of size $t$ and $n+1-t$ respectively such that the size $t$ class is a clique.  If $k$ is odd, we have that $G = G_{n+1,k,t}$. If $k$ is even, since $G$ contains the maximum number of $C_4$'s, it follows that $G$ has an additional edge (it cannot have 2 more for otherwise we would have a $P_k$).  Thus, if $k$ is even we also have $G = G_{n+1,k,t}$.

%Thus, $K_{n+1,k,t} \subset G$ (or $K_{n+1,k-1,t} \subset G$ when $k$ is even), and then by maximality of $G$ in this case we can assume that $G = G_{n+1,k,t}$, or when $k$ is even and $t\leq r-2$ the extra edge in $G_{n+1,k,t}$ is not needed and so $G_{n+1,k-1,t}$ is also an extremal graph.

Therefore, either $G = G_{n+1,k,t}$ or by removing a minimum degree vertex $v$ we obtain a graph $G'$ with $\N(C_4,G') > \N(C_4,G) - {t \choose 2}(n-2) = \ex(n+1, C_4,P_k)-  {t \choose 2}(n-2)$. Since $\ex(n, C_4,P_k)\geq  \N(C_4,G')$, we have that $\ex(n+1, C_4,P_k) - \ex(n, C_4,P_k) < {t \choose 2}(n-2).$
\end{proof}

The same argument proves the following. 

\begin{theorem}
For every positive integer $k\geq 5$, there exists $n_1 \in \mathbb{N}$ such that if $n \geq n_1$
\begin{displaymath}
\ex(n,C_4,C_{\geq k}) = \N(C_4,G_{n,k,t})  = {n-t \choose 2}{t\choose 2} + 3(n-t){t \choose 3} + 3{t \choose 4} + 2\eta_k{t\choose 2}, 
\end{displaymath}
where $\eta_k = 1$, if $k$ is even, and 0 otherwise. Moreover, the only extremal graph is $G_{n,k,t}$.
\end{theorem}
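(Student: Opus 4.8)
The plan is to copy the proof of Theorem~\ref{c4pk} essentially verbatim, the only genuine change being the treatment of dense extremal graphs. Precisely, I would prove the exact analogue of Claim~\ref{clxyz}: for $n$ sufficiently large, $\ex(n+1,C_4,C_{\ge k}) - \ex(n,C_4,C_{\ge k}) \le \binom{t}{2}(n-2)$, with equality only if the unique extremal graph on $n+1$ vertices is $G_{n+1,k,t}$. Granting this, the deduction of the theorem is word for word the monotone-sequence argument used to pass from Claim~\ref{clxyz} to Theorem~\ref{c4pk}: one computes $\N(C_4,G_{n+1,k,t}) = \N(C_4,G_{n,k,t}) + \binom{t}{2}(n-2)$ from the displayed formula (the computation is identical to the $P_k$-free case), notes that $G_{n,k,t}$ is $C_{\ge k}$-free, and concludes that the sequence $\ex(n,C_4,C_{\ge k}) - \N(C_4,G_{n,k,t})$ of non-negative integers is non-increasing and strictly decreases after every non-zero term, hence is eventually $0$ with $G_{n,k,t}$ the unique extremal graph.

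To prove the claim, let $G$ be a $C_{\ge k}$-free graph on $n+1$ vertices maximising $\N(C_4,\cdot)$, and induct on $n$. In the proof of Claim~\ref{clxyz} the case $\delta(G)>t$ is disposed of via Dirac's argument, which forces every component to have at most $k$ vertices and hence $\N(C_4,G)=O(n)$; this step fails for $C_{\ge k}$-free graphs, since, e.g., a long chain of cliques $K_{t+2}$ glued at cut vertices is $C_{\ge k}$-free with minimum degree $t+1$. I would replace it as follows. Since a $C_4$ is $2$-connected it cannot traverse a cut vertex, so if $G$ is disconnected or has a cut vertex, splitting $V(G)$ across a cut vertex (or into two groups of components) produces $C_{\ge k}$-free graphs $G_1,G_2$ with $|V(G_1)\cap V(G_2)|\le 1$, $|V(G_i)|\le n$, and $\N(C_4,G)=\N(C_4,G_1)+\N(C_4,G_2)$; applying the induction hypothesis to each $G_i$ (or the trivial bound $\N(C_4,G_i)\le 3\binom{|V(G_i)|}{4}$ when $|V(G_i)|$ is below the induction threshold), together with the strict superadditivity of $m\mapsto\N(C_4,G_{m,k,t})$ — whose leading term is $\tfrac{1}{2}\binom{t}{2}m^2$ — and the bound $|V(G_1)|+|V(G_2)|\le n+2$, gives $\N(C_4,G)<\N(C_4,G_{n+1,k,t})$, contradicting maximality. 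Hence $G$ is $2$-connected. If now $\delta(G)>t$, then by the classical Dirac circumference bound \cite{dirac1952some} $G$ has a cycle of length at least $\min\{2(t+1),n+1\}$; since $2(t+1)\ge k$ (with equality when $k$ is even) and $n+1\ge k$, this is a cycle of length $\ge k$, a contradiction. Therefore $\delta(G)\le t$, and from this point the argument is identical to the proof of Claim~\ref{clxyz}: remove a vertex $v$ of minimum degree, observe that at most $\binom{d(v)}{2}(n-2)\le\binom{t}{2}(n-2)$ copies of $C_4$ pass through $v$, and analyse equality. Equality forces $d(v)=t$ and all neighbours of $v$ to have full degree, so $G$ contains a complete bipartite graph with parts of sizes $t$ and $n+1-t$ whose smaller side is a clique; any additional edge would close a cycle of length $\ge k$ (a short case check, exactly as $P_k$-freeness excludes it in Theorem~\ref{c4pk}, except that when $k$ is even one extra edge inside the large side is permitted), so $G=G_{n+1,k,t}$.

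The main obstacle is the superadditivity estimate used to exclude a non-$2$-connected extremal graph: one must verify that $\N(C_4,G_{a,k,t})+\N(C_4,G_{b,k,t})<\N(C_4,G_{n+1,k,t})$ whenever $a+b\le n+2$ with $a,b\ge 2$, treating separately the case in which one part is bounded (where $\N(C_4,G_{n,k,t})+O(1)<\N(C_4,G_{n+1,k,t})$ already suffices) and choosing the induction threshold $n_1$ large in terms of the constants hidden in the $O(m)$ error term of $\N(C_4,G_{m,k,t})$. This is routine convexity bookkeeping, and no other part of the proof of Theorem~\ref{c4pk} changes, since the only features of $P_k$-freeness it uses are that the extremal graph possesses a vertex of degree at most $t$ and that the complete-bipartite-plus-clique structure cannot be enlarged, both of which hold equally for $C_{\ge k}$-freeness.
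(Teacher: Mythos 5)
You are right that the paper's ``same argument'' does not transfer verbatim: for $C_{\ge k}$-free graphs, $\delta(G)>t$ does not force small components (your chain-of-cliques example is correct, and one can even attach a clique $K_{t+2}$ to every low-degree vertex of $G_{m,k,t}$ to get a $C_{\ge k}$-free graph with $\delta\ge t+1$ and $\Theta(n^2)$ copies of $C_4$), so this case genuinely needs a new idea. Your treatment of the $2$-connected case is also correct: Dirac's circumference theorem gives a cycle of length $\ge\min\{2(t+1),n+1\}\ge k$, so a $2$-connected $C_{\ge k}$-free graph on at least $k$ vertices has $\delta\le t$. The gap is in the reduction to the $2$-connected case. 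Your superadditivity argument is circular: to rule out a split into pieces of sizes $a$ and $b$ you need an upper bound on $\ex(a,C_4,C_{\ge k})$ for every $a$ up to the induction threshold, but below that threshold the only available bound is $3\binom{a}{4}=\Theta(a^4)$, while the gain $\N(C_4,G_{n+1,k,t})-\N(C_4,G_{b,k,t})\approx\binom{t}{2}(a-1)n$ only beats $\Theta(a^4)$ once $n$ is large \emph{in terms of the threshold} --- which forces the threshold upward and restarts the problem. The induction hypothesis cannot supply a quadratic bound for intermediate $a$ because the claim has not yet been established there; this is not routine bookkeeping but an infinite regress, and I do not see how to close it along the route you describe.

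The repair is to avoid comparing extremal numbers of the two pieces altogether. If $\delta(G)\le t$ the original minimum-degree removal works regardless of connectivity, so assume $\delta(G)\ge t+1$ and that $G$ has a cut vertex (the $2$-connected case being impossible). Take a leaf block $B$ with unique cut vertex $x$. Every vertex of $B\setminus\{x\}$ has all its neighbours in $B$, hence degree $\ge t+1$ inside $B$; by the standard one-exceptional-vertex version of Dirac's circumference bound (a $2$-connected graph in which all vertices but one have degree $\ge d$ has a cycle of length $\ge\min\{|B|,2d\}$), $|B|\ge k$ would produce a cycle of length $\ge k$, so $|B|\le k-1$. Now pick any $v\in B\setminus\{x\}$: since $C_4$ is $2$-connected, every $C_4$ through $v$ lies entirely in $B$, so $v$ lies in at most $3\binom{k-2}{3}=O_k(1)<\binom{t}{2}(n-2)$ copies of $C_4$, and removing $v$ gives the difference inequality with strict inequality. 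This needs no bound on $\ex(m,C_4,C_{\ge k})$ for smaller $m$ and eliminates the circularity; the rest of your argument (the equality analysis forcing $G_{n+1,k,t}$, and the monotone-sequence deduction of the theorem from the claim) then goes through as you wrote it.
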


%%%%%%%%%%%%%%%%%%%%%%%%%%%%%%%%%%%%%%%%%%%%%%%%%%%%%%%%%%%%%%%%%%%%%%%%%%%%%%%%%%%%%%%%%%%%%%%%%%%%%%%%%%%%%%%%%%%%%%%%%%%%%%%%%%%%%%%%%%%%%% Stars P_k-free
\subsection{Number of copies of $S_r$}

We will prove the following theorem about the number of copies of $P_2$.  However, it will follow as a consequence of a more general result about stars.
\begin{theorem} 
For every positive integer $k\geq 3$, there exists $n_1 \in \mathbb{N}$ such that if $n \geq n_1$
%Let $n \geq 8k^2$ and $k\geq 2$, then 
\begin{displaymath}\ex(n,P_2,P_k) = \N(P_2,G_{n,k,t}) = t{n - 1\choose 2} + (n-t){t \choose 2} + 2t\eta_k,
\end{displaymath}
where $\eta_k = 1$, if $k$ is even, and 0 otherwise. Moreover, the only extremal graph is $G_{n,k,t}$.
\end{theorem}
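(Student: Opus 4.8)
The plan is to deduce this from a more general theorem about stars, exploiting the fact that $P_2 = S_2$: I would prove that for every $r\ge 2$ and every $k\ge 2r-1$ there is an $n_1$ such that $\ex(n,S_r,P_k)=\N(S_r,G_{n,k,t})$ for all $n\ge n_1$, with $G_{n,k,t}$ the unique extremal graph. Using $\N(S_r,G)=\sum_{v\in V(G)}\binom{d(v)}{r}$ and the degree sequence of $G_{n,k,t}$ (namely $t$ vertices of degree $n-1$, two vertices of degree $t+1$ when $k$ is even, all other vertices of degree $t$) one reads off
\begin{displaymath}
\N(S_r,G_{n,k,t}) = t\binom{n-1}{r} + (n-t)\binom{t}{r} + 2\eta_k\binom{t}{r-1},
\end{displaymath}
which for $r=2$ is exactly $t\binom{n-1}{2}+(n-t)\binom{t}{2}+2t\eta_k$ since $\binom{t}{1}=t$; one also computes directly that $\N(S_r,G_{n+1,k,t})-\N(S_r,G_{n,k,t}) = t\binom{n-1}{r-1}+\binom{t}{r}$.

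As with Theorem~\ref{c4pk} and the cycle claims of Section~\ref{asy}, the whole statement reduces to a single difference inequality: for all sufficiently large $n$,
\begin{displaymath}
\ex(n+1,S_r,P_k)-\ex(n,S_r,P_k) \le t\binom{n-1}{r-1}+\binom{t}{r},
\end{displaymath}
with equality only if $G_{n+1,k,t}$ is the unique extremal graph on $n+1$ vertices. Granting this, the integer sequence $\ex(n,S_r,P_k)-\N(S_r,G_{n,k,t})$ is non-negative, non-increasing, and strictly decreasing after every nonzero term, hence eventually identically $0$, and from that point $G_{n,k,t}$ is the unique extremal graph — the same bookkeeping already used for $C_4$.

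To prove the difference inequality, let $G$ be a $P_k$-free graph on $n+1$ vertices with $\N(S_r,G)=\ex(n+1,S_r,P_k)$. If $\delta(G)>t$, the classical argument of \cite{dirac1952some} (as used above) forces every component of $G$ to have at most $k$ vertices, so $\N(S_r,G)=O(n)$; since $r\ge 2$ and $t\ge 1$, this is eventually smaller than $\N(S_r,G_{n+1,k,t})=\Theta(n^r)$, contradicting extremality, so $\delta(G)\le t$. Let $v$ be a vertex of minimum degree and $G'=G-v$. A copy of $S_r$ destroyed by deleting $v$ is either centred at $v$, of which there are $\binom{d(v)}{r}\le\binom{t}{r}$, or has $v$ as a leaf and is thus centred at some $u\in N(v)$, of which there are $\sum_{u\in N(v)}\binom{d(u)-1}{r-1}\le d(v)\binom{n-1}{r-1}\le t\binom{n-1}{r-1}$ (using $d(u)\le n$). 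Hence
\begin{displaymath}
\ex(n+1,S_r,P_k)=\N(S_r,G)\le \N(S_r,G')+\binom{t}{r}+t\binom{n-1}{r-1}\le \ex(n,S_r,P_k)+\binom{t}{r}+t\binom{n-1}{r-1},
\end{displaymath}
which is the required inequality.

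For the equality case, tightness of the bound forces $d(v)=t$ and $d(u)=n$ for every $u\in N(v)$, so $S:=N(v)$ is a clique of size $t$ adjacent to all other vertices. It remains to show that $U:=V(G)\setminus S$ is independent when $k$ is odd and spans exactly one edge when $k$ is even, which yields $G=G_{n+1,k,t}$; uniqueness in the equality case then follows since this reasoning applies to every extremal graph on $n+1$ vertices once the difference is tight. This last structural step is the main obstacle, but it is the same path-chasing used elsewhere in the paper: from an edge of $U$ (or from two edges of $U$, whether or not they share a vertex, when $k$ is even) one builds a $P_k$ by alternating that edge (or edges) with vertices of $S$ and with further vertices of $U$, which are available since $n$ is large — contradicting $P_k$-freeness; and when $k$ is even $U$ must span at least one edge, since otherwise $G$ would be $P_k$-free with $2\binom{t}{r-1}>0$ fewer copies of $S_r$ than $G_{n+1,k,t}$, contradicting extremality. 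Checking the parity split and that enough spare vertices of $U$ remain to complete the forbidden path is routine for $n$ large, and everything else is the bookkeeping described above; specializing the finished theorem to $r=2$ gives the stated result for $P_2$.
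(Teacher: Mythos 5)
Your proposal matches the paper's proof essentially step for step: the paper likewise derives the $P_2$ statement from the general star theorem, proves the same difference inequality $\ex(n+1,S_r,P_k)-\ex(n,S_r,P_k)\le \binom{t}{r}+t\binom{n-1}{r-1}$ by deleting a minimum-degree vertex after dispatching the case $\delta(G)>t$ via Dirac's argument, and finishes with the same non-increasing integer-sequence bookkeeping. The only deviation is your restriction $k\ge 2r-1$, which forces $t\ge r-1$ and so sidesteps the paper's exceptional case ($k$ even, $t\le r-2$, where $G_{n,k-1,t}$ is also extremal); this costs nothing for the $r=2$ specialization.
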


More generally we have,

\begin{theorem} 
For every positive integer $k\geq 3$ and $r \ge 2$, there exists $n_1 \in \mathbb{N}$ such that if $n \geq n_1$,
%Let $n \geq 8k^2 , k\geq 2$ and $r\geq 2$ then 
\begin{displaymath}
\ex(n,S_r,P_k) = \N(S_r,G_{n,k,t}) = t{n - 1\choose r} + (n-t){t \choose r} + 2\eta_k{t \choose r-1},
\end{displaymath}
where $\eta_k = 1$, if $k$ is even, and 0 otherwise. Moreover, the only extremal graph is $G_{n,k,t}$, unless $k$ is even and $t \leq r-2$ in which case the only extremal graphs are $G_{n,k,t}$ and $G_{n,k-1,t}$. 
\end{theorem}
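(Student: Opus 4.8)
The plan is to follow the same induction-on-$n$ strategy used for $C_4$ and $P_2$: I would isolate a difference claim of the form $\ex(n+1,S_r,P_k)-\ex(n,S_r,P_k)\le \N(S_r,G_{n+1,k,t})-\N(S_r,G_{n,k,t})$, with equality forcing the $(n+1)$-vertex extremal graph to be $G_{n+1,k,t}$ (or, in the exceptional range, $G_{n+1,k-1,t}$), and then conclude by the now-standard monotone-sequence-of-nonnegative-integers argument spelled out after Claim~\ref{clxyz}. First I would compute $\N(S_r,G_{n,k,t})$ explicitly to verify the claimed formula $t\binom{n-1}{r}+(n-t)\binom{t}{r}+2\eta_k\binom{t}{r-1}$: a copy of $S_r$ is determined by its center and an $r$-subset of that center's neighborhood, so sum $\binom{d(x)}{r}$ over $x$; the $t$ clique vertices have degree $n-1$, the $n-t$ independent vertices have degree $t$ (ignoring the even-$k$ edge), and the extra edge in the color class of size $n-t$ bumps two vertices' degrees from $t$ to $t+1$, contributing $2(\binom{t+1}{r}-\binom{t}{r})=2\binom{t}{r-1}$. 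This also pins down the per-step increment as $\N(S_r,G_{n+1,k,t})-\N(S_r,G_{n,k,t}) = \binom{t}{r}+t\bigl(\binom{n}{r}-\binom{n-1}{r}\bigr)=\binom{t}{r}+t\binom{n-1}{r-1}$, which is the quantity that must appear on the right-hand side of the difference claim.

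Next I would set up the inductive step. Let $G$ be a $P_k$-free graph on $n+1$ vertices maximizing $\N(S_r,G)$. As in the $C_4$ proof, if $\delta(G)>t$ then by Dirac's argument every component has $\le k$ vertices, so $\N(S_r,G)=O(n)$, which is eventually smaller than $\N(S_r,G_{n,k,t})$ (this has order $n$ for $r\ge 2$ unless $r\ge 2$... actually it is $\Theta(n)$, so I need $\N(S_r,G_{n,k,t})$ to grow — it does, linearly, with leading coefficient $t$, and the $O(n)$ bound from the Dirac case has a fixed coefficient $\binom{k}{r}$-ish; choose $n_0$ large). So assume $\delta(G)\le t$ and let $v$ be a minimum-degree vertex. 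The number of copies of $S_r$ destroyed by deleting $v$ is the number of $S_r$'s containing $v$, which splits as: those centered at $v$, at most $\binom{d(v)}{r}\le\binom{t}{r}$; plus those with $v$ a leaf, centered at some neighbor $u$ of $v$, namely $\sum_{u\in N(v)}\binom{d(u)-1}{r-1}\le d(v)\binom{n-1}{r-1}\le t\binom{n-1}{r-1}$. Summing gives exactly the increment $\binom{t}{r}+t\binom{n-1}{r-1}$, so $\ex(n+1,S_r,P_k)-\N(S_r,G_{n+1,k,t})\le \ex(n,S_r,P_k)-\N(S_r,G_{n,k,t})$, as needed.

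The main obstacle — and the part requiring genuine care — is the equality analysis, both to identify the extremal graph and to handle the exceptional case $k$ even, $t\le r-2$. Equality in the leaf-count bound forces $d(v)=t$, every $u\in N(v)$ to have degree $n$ (i.e. full degree $n$ in an $(n+1)$-vertex graph), and equality in the center-count forces $d(v)=t$ again (automatic). Having $t$ vertices of full degree already gives $K_{t}\vee \overline{K}_{n+1-t}$ as a subgraph; $P_k$-freeness caps the rest, and for $k$ odd no further edge fits, giving $G=G_{n+1,k,t}$. For $k$ even one extra edge inside the independent part is both allowed and, by maximality, present — but here is the subtlety: when $t\le r-2$, that extra edge contributes $0$ to the $S_r$-count (its endpoints reach degree $t+1\le r-1<r$, so $\binom{t+1}{r}=\binom{t}{r}=0$), so placing it gives no advantage, and indeed $G_{n,k-1,t}$ (which is $K_t\vee\overline{K}_{n-t}$ with no extra edge, $P_{k-1}$-free hence $P_k$-free) is equally good; conversely when $t\ge r-1$ the extra edge strictly helps, forcing its presence and uniqueness of $G_{n+1,k,t}$. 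I would need to argue that in the exceptional range these are the \emph{only} two extremal graphs — i.e. that no $S_r$-count is lost by omitting the edge and none can be gained by any other configuration — and then run a parallel monotone-sequence argument tracking the pair $\{G_{n,k,t},G_{n,k-1,t}\}$ simultaneously. The rest is the routine telescoping already exhibited in the $C_4$ case.
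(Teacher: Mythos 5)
Your proposal is correct and follows essentially the same route as the paper: the same difference claim $\ex(n+1,S_r,P_k)-\ex(n,S_r,P_k)\le a_n=\binom{t}{r}+t\binom{n-1}{r-1}$, the same dichotomy on $\delta(G)$ versus $t$ with Dirac's argument in the high-minimum-degree case, the same count $\binom{d(v)}{r}+\sum_{u\in N(v)}\binom{d(u)-1}{r-1}$ of stars through a minimum-degree vertex, and the same equality analysis forcing $K_t\vee\overline{K}_{n+1-t}$ and isolating the exceptional case $k$ even, $t\le r-2$ (which you actually treat more explicitly than the paper does). One small correction to your aside in the Dirac case: $\N(S_r,G_{n,k,t})$ is $\Theta(n^r)$, not $\Theta(n)$, since the dominant term is $t\binom{n-1}{r}$ with $r\ge 2$, so the comparison with the $O(n)$ bound from components of size at most $k$ is even easier than you feared.
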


%In either case $a_{n+1} = a_n + c_n$, where $c_n ={t \choose r} + t{n-1 \choose r-1}$, now let $b_n := \ex(n,S_r,P_k).$ \\

%Claim

%Let $n \geq 2k$, then $\ex(n+1,S_r,P_k)-\ex(n,S_r,P_k) < a_n$, or $\ex(n+1,S_r,P_k) \leq \N(n+1,S_r)
Again, the result follows from a claim about the difference of the values of two consecutive extremal numbers. Let $\displaystyle a_n = \N(S_r,G_{n+1,k,t})-\N(S_r,G_{n,k,t}) = {t \choose r} + t{n-1 \choose r-1}$.

%\begin{claim} There exists $n_0 \in \mathbb{N}$ such that for every $n\geq n_0$, either $\ex(n+1,S_r,P_k) = \N(P_2,G_{n+1,k,t})$ or $\ex(n+1,S_r,P_k)-\ex(n,S_r,P_k) < a_n$.
%\end{claim}

\begin{claim} There exists $n_0 \in \mathbb{N}$ such that for every $n\geq n_0$, $\ex(n+1,S_r,P_k)-\ex(n,S_r,P_k) \leq a_n$ and equality can hold only if either $G_{n+1,k,t}$ is the only extremal graph on $n+1$ vertices or $k$ is even, $t \le r-2$ and the only extremal graphs are $G_{n+1,k,t}$ and $G_{n+1,k-1,t}$.
\end{claim}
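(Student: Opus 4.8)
The plan is to mimic the structure of the proof of Claim \ref{clxyz} almost verbatim, adapting the counting step to stars. First I would take $G$ to be a $P_k$-free graph on $n+1$ vertices attaining $\ex(n+1,S_r,P_k)$. Arguing as in the $C_4$ case, if $\delta(G) > t$ then by Dirac's argument every component has at most $k$ vertices, so $\N(S_r,G) \le (n+1)\binom{k-1}{r}$, which is $O(n)$; since the target value $\N(S_r,G_{n,k,t})$ grows like $\binom{n}{r}$ (recall $r \ge 2$), for $n$ large enough this is too small, so we may assume $\delta(G) \le t$. Let $v$ be a vertex of minimum degree $d = d(v) \le t$.

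Next I would count the copies of $S_r$ destroyed by deleting $v$. An $S_r$ containing $v$ either has $v$ as its center, contributing $\binom{d}{r}$ copies, or has $v$ as a leaf, in which case it is centered at a neighbor $w$ of $v$ and the other $r-1$ leaves are chosen among $N(w)\setminus\{v\}$, contributing at most $\sum_{w \in N(v)} \binom{d(w)-1}{r-1} \le d\binom{n-1}{r-1} \le t\binom{n-1}{r-1}$ copies. Hence at most $\binom{t}{r} + t\binom{n-1}{r-1} = a_n$ copies of $S_r$ are lost, giving $\ex(n+1,S_r,P_k) \le \ex(n,S_r,P_k) + a_n$ after passing to $G' = G - v$. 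This is the easy direction; then $\ex(n,S_r,P_k) - \N(S_r,G_{n,k,t})$ is a non-increasing sequence of nonnegative integers, strictly decreasing after each nonzero term provided we pin down the equality case, and the theorem follows exactly as in the paragraph after Claim \ref{clxyz}.

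The delicate part is the equality analysis, and I expect this to be the main obstacle because the extremal configuration is not unique when $k$ is even and $t \le r-2$. Equality in the leaf-count forces $d(v) = t$ and every neighbor of $v$ to have degree $n$ (full degree); equality in the center-count is automatic once $d(v)=t$. Full-degree neighbors of $v$ mean $G$ contains $K_{t,n+1-t}$ with the size-$t$ side $X$ a clique (each vertex of $X$ is adjacent to all others). So $G$ contains $G_{n+1,k,t}$ minus the possible extra edge. If $k$ is odd, $P_k$-freeness forbids any further edge inside the size-$(n+1-t)$ side, so $G = G_{n+1,k,t}$. If $k$ is even, $G$ may carry one extra edge inside the independent side; since $G$ is extremal it must carry such an edge whenever doing so is possible without creating $P_k$, giving $G = G_{n+1,k,t}$ — \emph{unless} $t \le r-2$, where the extra edge contributes $0$ to the $S_r$ count (a star $S_r$ with $r \ge t+2$ leaves cannot be hosted using only the two endpoints of that edge plus their other neighbors in a clique of size $t$), so that edge is optional; one must then check that relocating it (or deleting it) corresponds precisely to $G_{n+1,k-1,t}$, and that no genuinely different extremal graph arises. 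I would handle this by a short case check: any extra edge inside the independent side either is absent (giving $G_{n+1,k,t}$) or, together with the clique $X$ and the bipartite edges, its presence is equivalent to viewing two independent-side vertices as a clique of size $t$ over $n$ remaining vertices, i.e. $G_{n+1,k-1,t}$; more than one such edge would create a $P_k$. Pinning down that these are the only two possibilities, and that both actually attain $a_n$, completes the equality case and hence the claim.
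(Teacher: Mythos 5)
Your proposal follows the paper's proof essentially verbatim: the same minimum-degree dichotomy via Dirac's argument, the same center/leaf decomposition giving the bound $\binom{t}{r} + t\binom{n-1}{r-1} = a_n$, and the same equality analysis forcing $d(v)=t$ with full-degree neighbours spanning a $K_{t,n+1-t}$ whose small side is a clique; in fact your treatment of the equality case is more explicit than the paper's one-line appeal to maximality. One small slip: for $k$ even the graph \emph{with} the extra edge inside the independent side is $G_{n+1,k,t}$ (its class $C$ consists of two adjacent vertices), while the edge-free graph is $G_{n+1,k-1,t}$, so your parenthetical identifications of the two extremal graphs in the $t\le r-2$ case are swapped.
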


\begin{proof}
For any graph $G$, we have that $\N(S_r,G) = \displaystyle \sum_{v\in V(G)}{d(v) \choose r}$. Let $G$ be a $P_k$-free graph with $n+1$ vertices and maximum number of copies of $S_r$; that is, $\N(S_r,G)  = \ex(n+1,S_r,P_k)$.  
 We will consider cases depending on the minimum degree of $G$.

If $\delta(G) > t,$ then every connected component of $G$ must have at most $k$ vertices. So the number of copies of $S_r$ is bounded by $n{k-1 \choose r}$, then we choose $n_0$ such that this number is less than $\N(S_r,G_{n,k,t})$ for $n\geq n_0$.

If $\delta(G) \leq t$, then by removing $v$ a vertex of minimum degree, we remove at most \begin{displaymath}
{d(v) \choose r} + \sum_{u\in N(v)}{d(u)-1 \choose r - 1} \leq {t \choose r} + t{n-1 \choose r-1} 
\end{displaymath}
copies of $S_r$.
Equality can hold only if $\delta(G) = t$ and $t$ vertices have degree $n$, so $G$ contains a complete bipartite graph with color classes of size $t$ and $n+1-t$ such that class of size $t$ is a clique.  Then the characterization of the extremal cases again follows from the maximality of $G$.
\end{proof}

%\begin{remark} If $\ex(n,S_r,P_k) = \N(S_r,G_{n,k,t}),$ and $n \geq n_1$ then $\ex(n+1,S_r,P_k) = \N(S_r,G_{n,k,t})$, so by the claim this is only possible if we are in case b) (i), then our graph is a connected graph with maximal number of edges, therefore it is the extremal graph  $G_{n,k,t}$. In the same way we can find the extremal graph in the results which follow,\end{remark}

\begin{remark}
By checking more carefully the difference between the number of $r$-stars using $v$ and the number $a_n$, we can find a bound for $n_1$ of order $k^{3/2}$.
\end{remark}

%%%%%%%%%%%%%%%%%%%%%%%%%%%%%%%%%%%%%%%%%%%%%%%%%%%%%%%%%%%%%%%%%%%%%%%%%%%%%%%%%%%%%%%%%%%%%%%%%%%%%%%%%%%%%%%%%%%%% Stars C_{\geq k}-free
%We now prove some similar results for the case when all sufficiently long cycles are forbidden.

Similarly to before, the same method proves the following two results.

\begin{theorem}
\label{p2cycles}
For every positive integer $k\geq 5$, there exists $n_1 \in \mathbb{N}$ such that if $n \geq n_1$,

%Let $n \geq 8k^2$ and $k\geq 5$, then
\begin{displaymath}
\ex(n,P_2,C_{\geq k}) = \N(P_2,G_{n,k,t}) = t{n - 1\choose 2} + (n-t){t \choose 2} + 2t\eta_k,
\end{displaymath}
where $\eta_k = 1$, if $k$ is even, and 0 otherwise. Moreover the only extremal graph is $G_{n,k,t}$.
\end{theorem}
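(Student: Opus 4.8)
The plan is to run the same scheme as in the proof of Theorem~\ref{c4pk} (and of the star results): reduce everything to a one-step inequality between consecutive extremal numbers. First I would record that $G_{n,k,t}$ is $C_{\ge k}$-free, which gives the lower bound $\ex(n,P_2,C_{\ge k})\ge \N(P_2,G_{n,k,t})$: a longest cycle must alternate between the $t$-clique and the independent set, so it uses all $t$ clique vertices and has length $2t=k-1$ when $k$ is odd; when $k$ is even the single extra edge inside the large part lets a cycle contain two consecutive independent vertices, which only brings the length up to $2t+1=k-1$. Setting $a_n\coloneqq \N(P_2,G_{n+1,k,t})-\N(P_2,G_{n,k,t})$, a one-line computation from the closed form gives $a_n=\binom t2+t(n-1)$, and the goal is the claim: there is $n_0$ with $\ex(n+1,P_2,C_{\ge k})-\ex(n,P_2,C_{\ge k})\le a_n$ for $n\ge n_0$, with equality only if $G_{n+1,k,t}$ is the unique extremal graph on $n+1$ vertices. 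Granting the claim, the theorem follows exactly as for Theorem~\ref{c4pk}: $\ex(n,P_2,C_{\ge k})-\N(P_2,G_{n,k,t})$ is a non-increasing sequence of non-negative integers that strictly decreases after each positive value, hence is eventually $0$, and then $G_{n,k,t}$ is eventually the unique extremal graph.

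To prove the claim, let $G$ be a $C_{\ge k}$-free graph on $n+1$ vertices with $\N(P_2,G)=\ex(n+1,P_2,C_{\ge k})$ and split on $\delta(G)$. If $\delta(G)\le t$, pick a vertex $v$ of minimum degree; deleting $v$ destroys precisely the copies of $P_2$ with $v$ at the centre and those with $v$ as a leaf, namely $\binom{d(v)}2+\sum_{u\in N(v)}\bigl(d(u)-1\bigr)\le \binom t2+t(n-1)=a_n$ copies, using that $G$ has $n+1$ vertices so every degree is at most $n$. Since $\ex(n,P_2,C_{\ge k})\ge \N(P_2,G-v)$, the inequality follows. Equality forces $d(v)=t$ and $d(u)=n$ for every $u\in N(v)$; thus the $t$ vertices of $N(v)$ form a clique joined completely to the other $n+1-t$ vertices, i.e. $G$ contains $G_{n+1,k,t}$ up to the extra edge present when $k$ is even. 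If $k$ is odd, an edge $b_1b_2$ inside the large part would yield the cycle $b_1b_2a_1b_3a_2\cdots a_tb_1$ on $2t+1=k$ vertices, impossible, so $G=G_{n+1,k,t}$. If $k$ is even, a single such edge keeps $G$ $C_{\ge k}$-free (the analogous cycle now has $2t+1=k-1$ vertices) and strictly increases $\N(P_2,\cdot)$, so the maximizer contains one; but two edges inside the large part, whether disjoint or sharing a vertex, always give a cycle on $2t+2=k$ vertices, so there is exactly one, and again $G=G_{n+1,k,t}$.

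There remains the case $\delta(G)>t$, and this is the step I expect to be the real obstacle. In the $P_k$-free analogues one concludes (via Dirac's path bound) that every component has at most about $k$ vertices and hence $\N(P_2,G)=O_k(n)$; for $C_{\ge k}$-freeness this is false — a disjoint union of cliques joined to a single apex vertex is $C_{\ge k}$-free, has minimum degree above $t$, and has $\Theta(n^2)$ copies of $P_2$. So instead I would show $G$ cannot hold more than $t$ vertices of degree close to $n$: if $v_1,\dots,v_{t+1}$ all have degree exceeding $(1-\varepsilon_k)n$ then they have $t+1$ common neighbours $w_1,\dots,w_{t+1}$, and $v_1w_1v_2w_2\cdots v_{t+1}w_{t+1}v_1$ is a cycle on $2t+2\ge k$ vertices, a contradiction. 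Combined with $e(G)=O_k(n)$ from the Erd\H{o}s--Gallai bound for $C_{\ge k}$-free graphs and a careful accounting of the remaining lower-degree vertices (this accounting is the delicate point), one should obtain $\N(P_2,G)<\N(P_2,G_{n,k,t})$ for $n$ large, so the maximizer never falls in this case. Alternatively one can argue blockwise, using that every large block is $2$-connected and $C_{\ge k}$-free, hence of Kopylov type with at most $t$ near-universal vertices; this is essentially the route the analogous arguments in the paper take. With that case excluded, the claim, and hence the theorem, follows.
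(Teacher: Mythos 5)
Your lower-bound computation, the value $a_n=\binom{t}{2}+t(n-1)$, the vertex-deletion count $\binom{d(v)}{2}+\sum_{u\in N(v)}(d(u)-1)$ in the case $\delta(G)\le t$, and the equality analysis (including the parity discussion of extra edges inside the large part) all match what the paper intends: the paper gives no separate proof of this theorem, stating only that ``the same method'' as for $\ex(n,S_r,P_k)$ applies, and your reduction to the claim about consecutive extremal numbers is exactly that method. You also correctly flag the one place where the transfer is not automatic: in the $P_k$-free arguments the case $\delta(G)>t$ is killed by Dirac's observation that every component has at most $k$ vertices, and your apex-of-cliques example (cliques of size $t+2$ glued at a single vertex) shows this conclusion is simply false for $C_{\ge k}$-free graphs --- such a graph has minimum degree $t+1$, arbitrarily large components, and $\Theta(n^2)$ copies of $P_2$. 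This is a real issue that the paper's ``same argument'' remark glosses over.

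However, your handling of that case is a sketch, not a proof, and as written it has a gap. The observation that at most $t$ vertices can have degree close to $n$ (via the cycle $v_1w_1\cdots v_{t+1}w_{t+1}v_1$ of length $2t+2\ge k$) is correct and useful, but it does not by itself yield $\N(P_2,G)<\N(P_2,G_{n,k,t})$, nor does it rescue the deletion step: if $\delta(G)=t+1$ and a minimum-degree vertex has $t$ neighbours of degree $n$ and one more of degree $(1-\varepsilon)n$, the naive count of destroyed cherries is about $(t+1-\varepsilon)n>a_n$, so one cannot simply delete and recurse. The ``careful accounting of the remaining lower-degree vertices,'' which you yourself identify as the delicate point, is precisely the missing content; the alternative blockwise/Kopylov route you mention would also need to be carried out (e.g.\ showing that in a $C_{\ge k}$-free graph the sum $\sum_v\binom{d(v)}{2}$ is maximised by $G_{n,k,t}$, block by block). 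Until one of these is completed, the case $\delta(G)>t$ --- and hence the claim, and hence the theorem --- is not established. I would encourage you to write out that case in full; doing so would in fact supply a detail that the paper itself omits.
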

Or more generally,

\begin{theorem}
For every positive integer $k\geq 5$, there exists $n_1 \in \mathbb{N}$ such that if $n \geq n_1$,
\begin{displaymath}
\ex(n,S_r,C_{\geq k}) = \N(S_r,G_{n,k,t}) =  \N(P_2,G_{n,k,t}) =t{n - 1\choose r} + (n-t){t \choose r} + 2\eta_k{t \choose r-1},
\end{displaymath}
where $\eta_k = 1$, if $k$ is even, and 0 otherwise. Moreover the only extremal graph is $G_{n,k,t}$, unless $k$ is even and $t \leq r-2$ in which case the only extremal graphs are $G_{n,k,t}$ and $G_{n,k-1,t}$.
\end{theorem}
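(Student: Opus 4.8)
The plan is to imitate the proof of Theorem~\ref{c4pk} and of the earlier theorem on $\ex(n,S_r,P_k)$: reduce to a bound on the increment $\ex(n+1,S_r,C_{\ge k})-\ex(n,S_r,C_{\ge k})$ and prove that bound by a minimum-degree dichotomy. Put $a_n:=\binom{t}{r}+t\binom{n-1}{r-1}$. A direct degree count in $G_{n,k,t}$ — its $t$ clique vertices have degree $n-1$, its remaining $n-t$ vertices have degree $t$ except that two of them have degree $t+1$ when $k$ is even, and $\binom{t+1}{r}=\binom{t}{r}+\binom{t}{r-1}$ — gives at once the closed form $\N(S_r,G_{n,k,t})=t\binom{n-1}{r}+(n-t)\binom{t}{r}+2\eta_k\binom{t}{r-1}$ and the identity $\N(S_r,G_{n+1,k,t})-\N(S_r,G_{n,k,t})=a_n$. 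Hence it suffices to prove, for all large $n$, that $\ex(n+1,S_r,C_{\ge k})-\ex(n,S_r,C_{\ge k})\le a_n$, with equality only if the unique extremal $(n+1)$-vertex graph is $G_{n+1,k,t}$ — and, in the single exceptional regime $k$ even with $t\le r-2$, also $G_{n+1,k-1,t}$, which carries the same $t$, has no extra edge, and is likewise $C_{\ge k}$-free. Granting this, $\ex(n,S_r,C_{\ge k})-\N(S_r,G_{n,k,t})$ is a sequence of non-negative integers that is non-increasing and strictly decreasing while positive, so it is eventually $0$; this gives the claimed equality, and the equality clause then identifies the extremal graph(s) for all $n\ge n_1$, exactly as in the argument following Claim~\ref{clxyz}.

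For the increment bound, let $G$ be $C_{\ge k}$-free on $n+1$ vertices with $\N(S_r,G)=\ex(n+1,S_r,C_{\ge k})$. If $\delta(G)\le t$, pick $v$ with $d(v)\le t$; deleting $v$ destroys exactly $\binom{d(v)}{r}+\sum_{u\in N(v)}\binom{d(u)-1}{r-1}\le\binom{t}{r}+t\binom{n-1}{r-1}=a_n$ copies of $S_r$, since $d(u)-1\le n-1$, so $\ex(n+1,S_r,C_{\ge k})\le\ex(n,S_r,C_{\ge k})+a_n$. Equality forces $d(v)=t$ and every vertex of $N(v)$ to be adjacent to all of $V(G)$; as $N(v)$ is then a clique, $G$ contains a spanning complete bipartite graph with parts $N(v)$ and $V(G)\setminus N(v)$ and with $N(v)$ a clique — that is, $G\supseteq G_{n+1,k,t}$ when $k$ is odd and $G\supseteq G_{n+1,k-1,t}$ when $k$ is even. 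One checks that a single extra edge inside the large part keeps $G$ $C_{\ge k}$-free precisely when $k$ is even (for $k$ odd such an edge already yields a cycle of length $2t+1=k$, and a second one for $k$ even yields a cycle of length $2t+2=k$); maximality of the star count then adds that edge unless $\binom{t}{r-1}=0$, which is the source of the $t\le r-2$ exception.

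The remaining case $\delta(G)>t$ is where $C_{\ge k}$-freeness, not just $P_k$-freeness, must be used: the fact exploited in the $P_k$ setting — high minimum degree forces every component to have at most $k$ vertices — fails here, as a long path, or a tree of $K_4$'s, is $C_{\ge k}$-free of unbounded size. The representative subcase is that $G$ has a dominating vertex $v^\ast$: then $G-v^\ast$ contains no path with $k-2$ edges (such a path together with $v^\ast$ closes up to a $k$-cycle) and has minimum degree at least $t$, and since a connected graph of minimum degree $t$ on at least $k-1$ vertices has a path with $\min(2t,k-2)=k-2$ edges, the components of $G-v^\ast$ have at most $k-2$ vertices; thus every vertex other than $v^\ast$ has degree at most $k-2$, whence $\N(S_r,G)\le\binom{n}{r}+O(n)$, which for $k\ge5$ (so $t\ge2$) is $<t\binom{n}{r}\le\N(S_r,G_{n+1,k,t})$ once $n$ is large — contradicting extremality. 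The general subcase is treated analogously through the block decomposition: a $2$-connected $C_{\ge k}$-free graph on at least $k$ vertices has minimum degree at most $t$ (Dirac's bound \cite{dirac1952some}: a $2$-connected graph has a cycle of length at least $\min(2\delta,\,\#\text{vertices})$, and $2(t+1)\ge k$), so after peeling endblocks and using $e(G)\le(k-1)n/2$ one again finds $\N(S_r,G)\le\binom{n}{r}+O(n)$. Making this last step fully rigorous — that a $C_{\ge k}$-free graph of minimum degree exceeding $t$ cannot concentrate enough degree to rival $t\binom{n}{r}$ copies of $S_r$ — is the main obstacle, and it needs the $2$-connected structure theory (Dirac's circumference inequality together with Kopylov-type edge bounds for the large blocks) rather than the Erd\H{o}s--Gallai edge count alone.
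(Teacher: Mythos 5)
Your overall skeleton is exactly the paper's: the paper does not write out a separate proof of this theorem at all, but simply asserts that ``the same method'' as in the $\ex(n,S_r,P_k)$ theorem applies, and your increment bound $a_n=\binom{t}{r}+t\binom{n-1}{r-1}$, the telescoping/integrality argument, the deletion count $\binom{d(v)}{r}+\sum_{u\in N(v)}\binom{d(u)-1}{r-1}$ in the case $\delta(G)\le t$, and the equality analysis (forcing $d(v)=t$, $t$ dominating vertices, then the parity discussion of the one extra edge and the $t\le r-2$ exception when $\binom{t}{r-1}=0$) all reproduce that method faithfully and correctly.

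The one place your write-up is not a complete proof is the case $\delta(G)>t$, and you have diagnosed the difficulty accurately: the paper's disposal of this case in the $P_k$ setting (Dirac's argument forces every component to have at most $k$ vertices) genuinely fails for $C_{\ge k}$-freeness --- e.g.\ for $k=6$ a long chain of $K_4$'s glued at cut vertices is $C_{\ge 6}$-free with minimum degree $3>t=2$ and arbitrarily large components. Your proposed repair (Dirac's circumference bound for $2$-connected blocks, peeling endblocks, and the Erd\H{o}s--Gallai edge count) is the right kind of argument, but as you yourself concede it is not carried out: you never actually prove that a $C_{\ge k}$-free graph with $\delta(G)>t$ has $\N(S_r,G)$ strictly below $\N(S_r,G_{n+1,k,t})\approx t\binom{n}{r}$, and convexity plus $e(G)\le (k-1)(n-1)/2$ alone does not give this, since those constraints permit up to roughly $k-1>t$ vertices of degree $n-1$. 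What is missing is a structural statement such as: a $C_{\ge k}$-free graph cannot contain $t+1$ vertices of degree $\omega(1)$ in a sufficiently entangled position (otherwise one routes a cycle of length $2(t+1)\ge k$ through them), so that all but $O(1)$ of the degree mass contributing $\binom{d(v)}{r}$ terms of order $n^r$ sits on at most $t$ vertices. Until that step is supplied, the $\delta(G)>t$ branch of the dichotomy is a genuine gap --- one the paper's one-sentence proof also leaves to the reader, but a gap nonetheless in any self-contained argument.
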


\begin{remark}
For $k=3$, Theorem \ref{p2cycles} also holds. Since $G$ must be a tree and by convexity the number of stars is maximized in a star of $n$ vertices, we have $G_{n,3,1} = K_{1,n-1}$, and this graph has ${n-1\choose r}$ stars. For $k= 4$, a star with a perfect matching or almost perfect matching in the neighborhood of the center vertex maximizes the number of copies of $P_2$,  with ${n-1 \choose 2} + (n-1)$, when $n$ is odd or ${n-1 \choose 2} + (n-2)$, when $n$ is even. Any graph containing the $n$ vertex star maximizes the number of copies of $S_r$ for $r\geq 3$. 
\end{remark}

%%%%%%%%%%%%%%%%%%%%%%%%%%%%%%%%%%%%%%%%%%%%%%%%%%%%%%%%%%%%%%%%%%%%%%%%%%%%%%%%%%%%%%%%%%%%%%%%%%%%%%%%%%%%%%%%%%%%%%%%%%%%%%%%%%%%%%%%% P_3
\subsection{Number of copies of $P_3$}

\begin{theorem}
For every positive integer $k\geq 5$, there exists $n_1 \in \mathbb{N}$ such that if $n \geq n_1$
%If $k\geq 5$ (so t $\geq 2$) and $n\geq f(k)$ then
\begin{displaymath} \ex(n,P_3,P_k) = \N(P_3,G_{n,k,t}) = \frac{3t(t-1)}{2}n^2 + O(n).
\end{displaymath}
Moreover the only extremal graph is $G_{n,k,t}$.
\end{theorem}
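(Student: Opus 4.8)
\emph{Lower bound.} The construction is $G_{n,k,t}$; write $S$ for its $t$-clique and $U$ for the independent part of size $n-t$ (the single edge inside $U$ present when $k$ is even contributes only to lower-order terms). Since $U$ is independent, every copy of $P_3$ has at most two of its four vertices in $U$, and exactly two only when these occupy one of the three pairs of non-consecutive positions of the path, namely $\{1,3\}$, $\{1,4\}$ or $\{2,4\}$; conversely, any such placement together with any choice of two vertices of $S$ for the remaining two positions yields a copy (all the required edges are present). Counting these, and observing that copies with at most one vertex in $U$ number only $O(n)$, gives $\N(P_3,G_{n,k,t})=\tfrac32(n-t)(n-t-1)t(t-1)+O(n)=\tfrac{3t(t-1)}{2}n^2+O(n)$.

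\emph{Upper bound and uniqueness: the scheme.} As in the $C_4$ and $S_r$ cases, set $a_n:=\N(P_3,G_{n+1,k,t})-\N(P_3,G_{n,k,t})$ (a fixed linear polynomial in $n$ with leading term $3t(t-1)n$), and prove that for all large $n$
\[
\ex(n+1,P_3,P_k)-\ex(n,P_3,P_k)\le a_n,
\]
with equality only when the $(n+1)$-vertex extremal graph is $G_{n+1,k,t}$; the telescoping/monotonicity argument following Claim \ref{clxyz} then finishes the proof. To prove the displayed inequality, let $G$ be $P_k$-free on $n+1$ vertices with $\N(P_3,G)$ maximum. If $\delta(G)>t$, the classical Dirac-type argument forces every component of $G$ to have $O_k(1)$ vertices, so $\N(P_3,G)=O_k(n)$, which is impossible for large $n$ since $\N(P_3,G_{n,k,t})=\Theta(n^2)$. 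Hence $\delta(G)\le t$; fix $v$ with minimum degree $d:=d(v)\le t$ and bound the number of copies of $P_3$ through $v$, which equals $\N(P_3,G)-\N(P_3,G-v)$. All such copies lie in the component $C_0$ of $v$, and the point is that $C_0$ is connected, so Theorem \ref{connEG} applies to it: if $|C_0|\ge 5k/4$ then $e(G[C_0])\le e(G_{|C_0|,k,t})=t|C_0|+O_k(1)\le t(n+1)+O_k(1)$, while if $|C_0|<5k/4$ then $\N(P_3,G)-\N(P_3,G-v)=O_k(1)$ and there is nothing to prove.

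\emph{Counting the copies through $v$.} A copy has $v$ either internal or at an endpoint. If $v$ is internal, both of its path-neighbours lie in $N(v)$ and the copy is determined by those two vertices together with one additional vertex, so the number of such copies is $(d-1)\sum_{w\in N(v)}(d(w)-1)-2e(G[N(v)])\le t(t-1)n$. If $v$ is an endpoint, a copy $v$--$w$--$x$--$y$ is recovered from the unique neighbour $w\in N(v)$ of $v$ on the path together with the $P_2$ $w$--$x$--$y$ in $G-v$, so the number of such copies is $\Sigma-2e(G[N(v)])$, where
\[
\Sigma:=\sum_{w\in N(v)}\sum_{x\in N(w)\setminus\{v\}}(d(x)-1)=\sum_{x}d(x)\,|N(x)\cap N(v)|-\sum_{w\in N(v)}d(w)-d^2+d .
\]
Setting $W:=\{x:N(v)\subseteq N(x)\}$ (the vertices with $|N(x)\cap N(v)|=d$), one has $\sum_x d(x)\,|N(x)\cap N(v)|\le(d-1)\,2e(G[C_0])+\sum_{x\in W}d(x)$; and since $W$ is complete to $N(v)$, an edge inside $W$ or from $W$ to $C_0\setminus(W\cup N(v))$ could be spliced into an alternating $N(v)$--$W$ path to produce a path on more than $k$ vertices once $|W|$ is large, forcing $\sum_{x\in W}d(x)-\sum_{w\in N(v)}d(w)=O_k(1)$; the complementary regime, where $\sum_{w\in N(v)}d(w)$ (and hence $|W|$) is small, is controlled by the bound $\Sigma\le n\sum_{w\in N(v)}d(w)$. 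Putting this together, the endpoint copies number at most $(d-1)\,2e(G[C_0])+O_k(1)\le 2t(t-1)n+O_k(1)$, whence $\N(P_3,G)-\N(P_3,G-v)\le 3t(t-1)n+O_k(1)$. The endpoint case is the main obstacle: because the ``second neighbours'' $x$ of $v$ are not confined to the small set $N(v)$, getting the sharp linear coefficient $2t(t-1)$ rather than some larger $O_k(1)\cdot n$ genuinely requires passing to $C_0$ and invoking the \emph{connected} Erd\H{o}s--Gallai bound, and then all the $O_k(1)$ terms above must be tracked exactly to reach $\le a_n$ on the nose.

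\emph{The equality case.} To see that equality throughout forces $G=G_{n+1,k,t}$: near-maximality of the internal count forces $d=t$ and every vertex of $N(v)$ to be universal; near-maximality of the endpoint count forces $e(G[C_0])$ to be within $O_k(1)$ of $t|C_0|$, so that the uniqueness clause of Theorem \ref{connEG} gives $G[C_0]=G_{|C_0|,k,t}$; and then maximality of $\N(P_3,G)$ forces $C_0$ to be all of $G$ (a second non-trivial component could be absorbed into $C_0$ strictly increasing the count), so $G=G_{n+1,k,t}$ — with, when $k$ is even, exactly one further edge inside the independent part, as that edge adds $P_3$'s while a $P_k$ forbids any more.
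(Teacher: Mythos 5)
Your overall scheme matches the paper's: reduce to the one-step bound $\ex(n+1,P_3,P_k)-\ex(n,P_3,P_k)\le a_n$, pass to a minimum-degree vertex $v$ with $d(v)\le t$, split the copies of $P_3$ through $v$ into internal and endpoint ones, and control the endpoint count via the connected Erd\H{o}s--Gallai bound. The lower bound and the internal count are fine. The genuine gap is that your endpoint bound is only established up to an additive $O_k(1)$ which you explicitly decline to track, and for this theorem that is not a cosmetic omission: the telescoping argument you invoke (the one following Claim~\ref{clxyz}) requires the \emph{exact} inequality $\le a_n$, because it rests on $\ex(n,P_3,P_k)-\N(P_3,G_{n,k,t})$ being a non-increasing sequence of non-negative integers that strictly decreases after every non-zero term. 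An inequality $\le a_n+C$ only yields $\ex(n,P_3,P_k)\le\N(P_3,G_{n,k,t})+Cn$, i.e.\ the asymptotic $\tfrac{3t(t-1)}{2}n^2+O(n)$, and gives neither the exact value nor uniqueness. The paper closes this by an exact identity for the endpoint count, $\sum_{u\in N(v)}\bigl(2e(G)-\sum_{w\notin N(u),\,w\ne u}d(w)-2d(u)\bigr)-2e(N(v))-d(v)(d(v)-1)$, first reducing to $\delta(G)\ge2$ (degree-one vertices are disposed of separately) so that $\sum_{w\notin N(u)}d(w)\ge2(n-d(u))$, and then substituting $e(G)\le e(G_{n+1,k,t})$ from Theorem~\ref{connEG}; every lower-order term is carried through to land exactly on $a_n$.

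Moreover, the specific device you use for the endpoint count is unsound as stated. The claim $\sum_{x\in W}d(x)-\sum_{w\in N(v)}d(w)=O_k(1)$ fails when $d(v)<t$: take $G_{n,k,t}$ with $t\ge3$ and attach a new vertex $v$ to two clique vertices; this graph is still $P_k$-free, $v$ is the minimum-degree vertex with $d(v)=2$, $W$ contains the remaining $t-2$ clique vertices (degree about $n$) together with the whole independent part, and the difference is of order $(2t-4)n$, not $O_k(1)$. Your splicing argument also cannot produce a $P_k$ unless $d(v)=t$ and $k$ is odd: an alternating $N(v)$--$W$ path has at most $2d(v)+1$ vertices, so one spliced vertex gives at most $2t+2$ vertices, which equals $k+1$ only for odd $k$ --- indeed for even $k$ the extremal graph genuinely carries one such extra edge. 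Finally, in the equality analysis, knowing $e(G[C_0])$ is within $O_k(1)$ of the extremal edge count does not let you invoke the uniqueness clause of Theorem~\ref{connEG}, which characterizes only graphs attaining the maximum; the correct route (which you also gesture at) is that equality in the internal count forces $d(v)=t$ and every vertex of $N(v)$ universal, after which $P_k$-freeness and maximality of $G$ give $G=G_{n+1,k,t}$ directly.
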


%Let $\displaystyle a_n =\N(P_3,G_{n+1,k,t})-\N(P_3,G_{n,k,t}) = 2t\left({t \choose 2} + (n+1-t)t + \eta_k -t-n+1\right) + t(t-1)(n-2).$\\

Let $\displaystyle a_n =\N(P_3,G_{n+1,k,t})-\N(P_3,G_{n,k,t}) = 2t\left({t-1 \choose 2} + (n-t)(t-1) + \eta_k\right) + t(t-1)(n-2).$\\

As in the previous results it is enough to prove the following claim.

\begin{claim} There exists $n_0 \in \mathbb{N}$ such that for every $n\geq n_0$, %either $\ex(n+1,P_3,P_k) = \N(P_3,G_{n+1,k,t})$ or $\ex(n+1,P_3,P_k) - \ex(n,P_3,P_k) < a_{n}.$\\
$\ex(n+1,P_3,P_k) - \ex(n,P_3,P_k) \leq a_{n}$ 
and equality can hold only if  $G_{n+1,k,t}$ is the only extremal graph on $n$ vertices. 
\end{claim}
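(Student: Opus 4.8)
The plan is to follow the now-familiar template: reduce the theorem to the displayed \textbf{Claim} about the difference $\ex(n+1,P_3,P_k)-\ex(n,P_3,P_k)\le a_n$, which is exactly the difference $\N(P_3,G_{n+1,k,t})-\N(P_3,G_{n,k,t})$, and then argue that the nonnegative integer sequence $\ex(n,P_3,P_k)-\N(P_3,G_{n,k,t})$ is non-increasing and strictly decreasing after each nonzero term, so it is eventually $0$; the uniqueness statement comes along for free from the equality analysis in the Claim. So the real content is the Claim, which I would prove by the minimum-degree deletion argument used throughout Section~\ref{exact}.

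First I would take $G$ to be a $P_k$-free graph on $n+1$ vertices with $\N(P_3,G)=\ex(n+1,P_3,P_k)$. Split on $\delta(G)$: if $\delta(G)>t$ then by Dirac's argument every component has at most $k$ vertices, so $\N(P_3,G)=O(n)$, which for $n$ large is far below $\N(P_3,G_{n,k,t})=\frac{3t(t-1)}{2}n^2+O(n)$ — contradiction. Hence $\delta(G)\le t$; let $v$ be a vertex of minimum degree and estimate the number of copies of $P_3$ (a path on $4$ vertices, $3$ edges) containing $v$. A $P_3$ through $v$ uses $v$ either as an endpoint or as an interior vertex. I would bound the count by cases according to the position of $v$ on the path: the number of $P_3$'s in which $v$ is an endpoint is at most $\sum_{u\in N(v)}\N(P_2,\,G\text{ rooted suitably at }u)$-type expressions, and the number in which $v$ is second (or third) vertex is at most $d(v)$ times a count of $P_2$'s emanating from a neighbor. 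Using $d(v)\le t$, $\N(P_2,G)\le\ex(n,P_2,P_k)=t\binom{n-1}{2}+O(n)$ (the previously proved star theorem with $r=2$), and $e(G)\le (k-1)n/2$ from Theorem~\ref{eg}, each of these contributions is at most the corresponding term of $a_n=2t\left(\binom{t-1}{2}+(n-t)(t-1)+\eta_k\right)+t(t-1)(n-2)$ up to lower order — and in fact I would arrange the bookkeeping so the total is at most exactly $a_n$. Then $G':=G-v$ satisfies $\N(P_3,G')\ge \N(P_3,G)-a_n=\ex(n+1,P_3,P_k)-a_n$, giving $\ex(n+1,P_3,P_k)-\ex(n,P_3,P_k)\le a_n$.

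For the equality case: equality forces $d(v)=t$, forces each neighbor $u$ of $v$ to have degree $n$ (full degree), and forces the relevant sub-counts of $P_2$'s and edges to be extremal, which by the characterizations already established pins down that $G$ contains $K_{t,n+1-t}$ with the size-$t$ side a clique; when $k$ is even, maximality of $\N(P_3,G)$ forces exactly one extra edge inside the large side (two would create $P_k$), so $G=G_{n+1,k,t}$. This is the same rigidity argument as in the $C_4$ and $S_r$ proofs. The main obstacle — and the step I would be most careful about — is the precise case analysis for $P_3$'s through $v$: unlike $S_r$ or $C_4$, the vertex $v$ can occupy three essentially different positions on $P_3$, and one must avoid double-counting while simultaneously getting a bound that matches $a_n$ to leading order in $n$ with the right constant $t(t-1)(n-2)$ for the "$v$ interior, endpoints far away" paths and $2t\big((n-t)(t-1)+\cdots\big)$ for the others; keeping the $O(n)$ error terms from contaminating the leading quadratic coefficient is where the choice of $n_0$ (and the stated $n_1$) comes from, and it relies essentially on $d(v)\le t$ being a genuine constant bound independent of $n$.
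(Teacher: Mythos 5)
Your overall architecture is the paper's: reduce to the difference claim, delete a minimum-degree vertex, split on $\delta(G)>t$ versus $\delta(G)\le t$, and run the non-increasing integer sequence argument for uniqueness. But the one step that carries all the content --- showing the number of copies of $P_3$ through $v$ is at most $a_n$ \emph{exactly}, not up to lower order --- is where your proposal has a genuine gap, and the tools you name cannot close it. For the paths with $v$ as an endpoint ($vuwx$), you need a bound of roughly $2t(t-1)n$, i.e.\ the count $\sum_{u\in N(v)}\sum_{w\in N(u)}(d(w)-1)$ must come out near $2t\,e(G_{n-1,k-2,t-1})$. The global bound $\N(P_2,G)\le\ex(n,P_2,P_k)=\Theta(n^2)$ is the wrong object entirely, and the trivial rooted bound $\sum_{u\in N(v)}2e(G)$ combined with Theorem~\ref{eg} ($e(G)\le(k-1)n/2$) gives about $t(k-1)n\approx 2t^2n$, which \emph{exceeds} the target $2t(t-1)n$; for $k$ even the discrepancy is even $\Theta(n)$ per deletion step. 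An additive error of even $O(1)$ per step is fatal here, because the exactness of the claim is precisely what makes the sequence $\ex(n,P_3,P_k)-\N(P_3,G_{n,k,t})$ eventually vanish; "arranging the bookkeeping" is the theorem, not a detail.

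The paper closes this gap with two ingredients you do not supply. First, it reduces to $\delta(G)\ge 2$ and exploits this in the double sum: writing $\sum_{w\in N(u),w\ne v}(d(w)-1-\mathbbm{1}_{w\in N(v)})$ as $2e(G)$ minus the degrees of non-neighbors of $u$ and lower-order corrections, the bound $\sum_{w\notin N(u),w\ne u}d(w)\ge 2(n-d(u))$ produces the crucial $-2d(v)n$ term, turning $2d(v)e(G)$ into $2d(v)(e(G)-n)\approx 2t(t-1)n$. Second, it invokes the \emph{connected} Erd\H{o}s--Gallai bound (Theorem~\ref{connEG}), $e(G)\le t(n+1-t)+\binom{t}{2}+\eta_k$, rather than Theorem~\ref{eg}; the constant-order (and, for $k$ even, linear-order) slack between the two is exactly what would otherwise push the count above $a_n$. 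There is also a separate elementary argument needed for $t=2$, where one must check that the two neighbors of $v$ are adjacent at the maximum. Your equality analysis (neighbors of $v$ of full degree, hence $G=G_{n+1,k,t}$) is the right conclusion, but it only becomes available once the exact inequality is established by the above means.
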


\begin{proof} Let $G$ be an $(n+1)$-vertex graph with maximum number of copies of $P_3$. We may assume that $\delta(G) \geq 2$. (If a vertex has degree 1, then it is in at most $2(t(n-t) + {t \choose 2} + \eta_k)$ copies of $P_3$.)

If $\delta(G) > t$, then each connected component of $G$ must have size at most $k$ (by Dirac's argument) and so $\N(P_3,G) \leq 3{k \choose 3}\frac{n+1}{k}$. In this case, we can choose $n_0$ such that for $n\geq n_0$, this number is less than $\N(P_3,G_{n,k,t})$.  Thus, we assume that $\delta(G) \leq t$.

Let $v$ be a vertex in $G$ with minimum degree, and consider the copies of $P_3$ beginning containing $v$ as their second vertex. We may suppose $G$ is connected and has enough vertices to apply Theorem~$\ref{connEG}$. 
Then the number of copies of $P_3$ whose second vertex is $v$ is bounded from above by 
\begin{displaymath}
d(v)(d(v)-1)(n-2) - 2(d(v)-2)\left(\binom{d(v)}{2}-e(N(v))\right).
\end{displaymath}
Indeed, the first term is the trivial upper bound $2 \binom{d(v)}{2}(n-2)$ obtained if every pair of neighbors of $v$ could be extended to path of length 3 in any possible way. The subtraction comes from the fact that each non-edge $\{a,b\}$ in the neighborhood of $v$ along with a third neighbor $c \in N(v)$ uniquely forbids 2 copies of $P_3$ namely $cvab$ and $cvba$.  We have bounded from above the number of copies of $P_3$ containing $v$ as a second vertex.  Now we will obtain an estimate on the number of copies of  $P_3$ starting at $v$.
We consider the number of ways to take distinct $u \in N(v), w \in N(u)$ and $x \in N(w)$:\\

%\begin{minipage}{.1\linewidth}
%\begin{flushleft}

 $\displaystyle \sum_{u\in N(v)}\sum_{\substack{w\in N(u)\\ w \not = v}}\Bigg( d(w)-1-\mathbbm{1}_{w\in N(v)}\Bigg) = \sum_{u\in N(v)}\Bigg(\sum_{\substack{w\in N(u)\\ w \not = v}}\Bigg( d(w)\Bigg) - d(u) +1\Bigg) - 2e(N(v))$\\
 
$ \displaystyle = \sum_{u\in N(v)}\Bigg(\sum_{w\in V(G)}\Bigg(d(w)\Bigg)  - \sum_{\substack{w\not\in N(u) \\ w\not=u}}\Bigg(d(w)\Bigg) - d(v)-2d(u)\Bigg) - 2e(N(v))+d(v) $\\

 $ \displaystyle = \sum_{u\in N(v)}\Bigg(2e(G) - \sum_{\substack{w\not\in N(u)\\w\not=u}}\Bigg(d(w)\Bigg) -2d(u)\Bigg) - 2e(N(v)) -d(v)(d(v)-1) $\\

$\displaystyle\leq \sum_{u\in N(v)}\Bigg(2e(G) - 2(n-d(u)) -2d(u)\Bigg) - 2e(N(v)) -d(v)(d(v)-1) $\\

$\displaystyle = 2d(v)(e(G) - n) - 2e(N(v)) -d(v)(d(v)-1), $\\
%\end{flushleft}
%\end{minipage}\\

where the inequality uses that $\delta(G)\geq 2$.  

The above sum is maximized when $d(v) = t$, and to achieve this maximum it is necessary that for every neighbor $u$ of $v$, we have that the non-neighbors of $u$ have degree 2. Moreover, if $t=2$ it is simple to check that to maximize the expression the two neighbors of $v$ must be adjacent. From here it follows that the number of copies of $P_3$ containing $v$ is at most $a_n$. 

When $t\geq 3$, we have the bound (conditioning on whether $v$ is at the beginning or middle of the path)
\begin{displaymath}
2t(e(G) - n) - 2e(N(v)) -t(t-1) + t(t-1)(n-2) - 2(t-2)\left({t \choose 2} - e(N(v))\right),
\end{displaymath}
\begin{displaymath}
\leq 2t(e(G) - n)  -2t(t-1) + t(t-1)(n-2).
\end{displaymath}
From Theorem \ref{connEG} it follows that this number is at most $a_n$.
To obtain equality, in both cases it is necessary that every neighbor of $v$ has full degree and so by maximality we have that $G = G_{n+1,k,t}$.
\end{proof}
%By adding these expressions we have that the number of $P_3$'s containing $v$ is at most $a_n$, hence $\ex(n+1,P_3,P_k) - \ex(n,P_3,P_k) \leq a_n$, and equality is only possible if $G = G_{n+1,k,t}$.

%--------------------picture
\begin{figure}[t]
\centering 
\label{}
\begin{tikzpicture}

\filldraw (0,0) node[align=center,below]{$v$} circle (2pt) -- (1,1) node[align=center,below]{$u$} circle (2pt);
\filldraw (1,1) -- (2,0) node[align=center,below]{$w$} circle (2pt);
\draw[dashed] (0,0)--(2,0);

\filldraw (2,0) -- (3,.75)circle (2pt);
\filldraw (2,0) -- (3,.25)circle (2pt);
%\draw (3,0) node[align=center]{$\vdots$};

%\filldraw (2,0) -- (3,0);
\filldraw (2,0) -- (3,-.25)circle (2pt);
\filldraw (2,0) -- (3,-.75)circle (2pt);
\end{tikzpicture} \qquad
\begin{tikzpicture}

\filldraw (0,1) node[align=center,below]{$v$} circle (2pt) -- (1,.5) node[align=center,below]{$u$} circle (2pt);
\filldraw (0,1) -- (-1,.5) node[align=center,below]{$w$} circle (2pt);
\draw[dashed] (-1,.5)--(1,.5);

\draw (2,-1) arc(0:360: 1.2cm and .5cm);
\draw (1,.5) -- (2,-1);
\draw (1,.5) -- (-.4,-1);

\draw (.2,-1) arc(0:360: 1.2cm and .5cm);
\draw (-1,.5) -- (.2,-1);
\draw (-1,.5) -- (-2.2,-1);

\end{tikzpicture} \qquad
\begin{tikzpicture}

\filldraw (0,0) node[align=center,below]{$u$} circle (2pt) -- (1,1) node[align=center,below]{$v$} circle (2pt);
\filldraw (1,1) -- (2,0) node[align=center,below]{$w$} circle (2pt);
\draw[dashed] (0,0)--(2,0);

\filldraw (3,0) node[align=center,below]{$x$}circle (2pt)--(4,.75) node[align=center,below]{$y$}circle (2pt);

\end{tikzpicture}
\caption{Constructing paths using $v$.}
\end{figure}
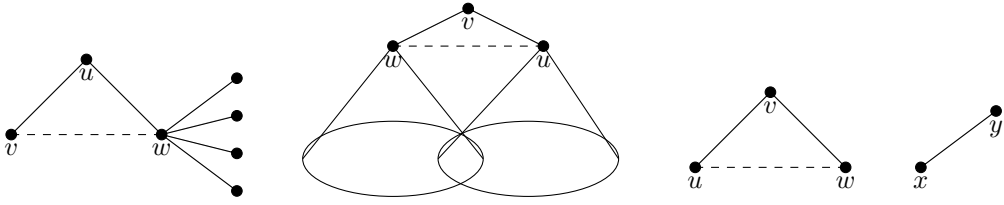

\begin{remark}
For $k = 4$, it is simple to check that the only extremal graph is a balanced double star on $n$ vertices, which has $\lfloor\frac{n-1}{2}\rfloor   \lceil\frac{n-1}{2} \rceil$ copies of $P_3$.
\end{remark}

Next we consider paths of length 4.

%%%%%%%%%%%%%%%%%%%%%%%%%%%%%%%%%%%%%%%%%%%%%%%%%%%%%%%%%%%%%%%%%%%%%%%%%%%%%%%%%%%%%%%%%%%%%%%%%%%%%%%%%%%%%%%%%%%%%%%%%%%%%%%% P_4
\subsection{Number of copies of $P_4$}

\begin{theorem}
\label{P4}
For every positive integer $k\geq 5$, there exists $n_1 \in \mathbb{N}$ such that if $n \geq n_1$, then
%If $k\geq 7$  and $n\geq f(k)$ then
\begin{displaymath}
\ex(n,P_4,P_k) = \N(P_4,G_{n,k,t}) = \frac{t(t-1)}{2}n^3 + O(n^2).
\end{displaymath}
Moreover the only extremal graph is $G_{n,k,t}$.
\end{theorem}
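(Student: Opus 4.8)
The plan is to mirror the proof of Theorem~\ref{c4pk}. Put $a_n := \N(P_4,G_{n+1,k,t}) - \N(P_4,G_{n,k,t})$; this is an explicit polynomial in $n$ of degree $2$ whose leading term is $\frac{3t(t-1)}{2}n^2$, and it equals the number of copies of $P_4$ in $G_{n+1,k,t}$ passing through the vertex added to the independent part. The core of the argument will be the following claim: there exists $n_0$ such that for every $n \ge n_0$,
\[
\ex(n+1,P_4,P_k) - \ex(n,P_4,P_k) \le a_n,
\]
and equality can hold only if $G_{n+1,k,t}$ is the unique extremal graph on $n+1$ vertices. Granting this, the argument in the paragraph following Claim~\ref{clxyz} carries over word for word: $\ex(n,P_4,P_k) - \N(P_4,G_{n,k,t})$ is then a non-increasing sequence of non-negative integers which drops strictly after each positive value, hence is eventually $0$, with $G_{n,k,t}$ eventually the unique extremal graph; this is Theorem~\ref{P4}.

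To prove the claim, let $G$ be an $(n+1)$-vertex $P_k$-free graph with $\N(P_4,G) = \ex(n+1,P_4,P_k)$. If $\delta(G) > t$ then by Dirac's classical argument (\cite{dirac1952some}) every component of $G$ has at most $k$ vertices, so $\N(P_4,G) = O(n) < \N(P_4,G_{n,k,t})$ for $n$ large, a contradiction; hence $\delta(G) \le t$. Exactly as in the proof for $P_3$ one reduces to the case $\delta(G) \ge 2$ (a vertex of degree at most $1$ is in comparatively few copies of $P_4$), and fixes a vertex $v$ with $2 \le d(v) = \delta(G) \le t$. Since $G-v$ is an $n$-vertex $P_k$-free graph, it then suffices to bound the number of copies of $P_4$ through $v$ by $a_n$, with equality forcing $G = G_{n+1,k,t}$.

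I would count the copies of $P_4$ through $v$ by the position of $v$ on the path. If $v$ is the second or fourth vertex, the copy is determined by an ordered pair of neighbours of $v$ together with a path $P_2$ rooted at one of them; since any vertex roots at most $\sum_{w}(d(w)-1) \le 2e(G) \le (k-1)(n+1) = O(n)$ copies of $P_2$, these contribute only $O(n)$. If $v$ is the middle vertex, the copy arises from an ordered pair $(x_2,x_4)$ of distinct neighbours of $v$ together with a neighbour of $x_2$ and a neighbour of $x_4$, and each such copy is produced exactly twice, so there are at most $\tfrac12\sum_{x_2 \ne x_4 \in N(v)} d(x_2)d(x_4) \le \tfrac12 d(v)(d(v)-1)n^2 \le \tfrac{t(t-1)}{2}n^2$ of them. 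If $v$ is an endpoint, a copy of $P_4$ through $v$ is an edge $vx_2$ followed by an ordered path $P_3$ rooted at $x_2$ and avoiding $v$, so their number equals $\sum_{x_2 \in N(v)} R(x_2)$, where $R(x)$ is the number of such rooted $P_3$'s.

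The endpoint term is where the real work lies, and I expect it to be the main obstacle. The naive estimates — for instance $R(x_2) \le \sum_{y \in N(x_2)}\sum_{z \in N(y)} d(z)$, or $\sum_{x_2 \in N(v)} R(x_2) \le 2\N(P_3,G-v) \le 2\ex(n,P_3,P_k)$ — overshoot the required bound $t(t-1)n^2 + O(n)$ by a constant factor (roughly $2$ or $3$), since they do not detect that the dominant rooted $P_3$'s from a neighbour $x_2$ of $v$ have the alternating shape $x_2\,u_1\,s\,u_2$ involving just one further high-degree vertex. To recover the exact constant I would split $V(G)$ into vertices of degree at most $D$ and more than $D$ for a suitable threshold $D=D(k)$: low-degree roots, and low-degree intermediate vertices, contribute only $O(n)$ per step, while the set of high-degree vertices cannot span a long path in a $P_k$-free graph and hence has bounded size, so, together with $d(v) \le t$ and the Erd\H{o}s--Gallai bound $e(G) \le e(G_{n,k,t})$ from Theorem~\ref{connEG}, the number of relevant rooted $P_3$'s is $(t-1)n^2 + O(n)$ per neighbour of $v$; carrying every estimate out exactly (as in the $C_4$ and $P_3$ proofs, where $a_n$ was an explicit polynomial) then gives $\N(P_4,G)-\N(P_4,G-v) \le a_n$. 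Finally, tracing the equality case through the three positions forces $d(v)=t$ and every neighbour of $v$ to have full degree, so $G$ contains $K_{t,\,n+1-t}$ with the size-$t$ side a clique; the maximality of $G$ then yields $G = G_{n+1,k,t}$ (with the one extra edge inside the large side when $k$ is even), which finishes the equality analysis.
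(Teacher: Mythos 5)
Your overall skeleton --- reducing to the difference claim $\ex(n+1,P_4,P_k)-\ex(n,P_4,P_k)\le a_n$, disposing of $\delta(G)>t$ by Dirac's argument, and counting the copies of $P_4$ through a minimum-degree vertex $v$ according to the position of $v$ on the path --- is exactly the paper's, and your leading-order bookkeeping (middle vertex at most $\frac{t(t-1)}{2}n^2$, second/fourth position $O(n)$, endpoint needing $(t-1)n^2+O(n)$ per neighbour of $v$) is correct. The gap is in the endpoint count: you have correctly identified it as the crux, but the proposal does not close it. Your degree-splitting argument only shows that the set $H$ of high-degree vertices has size bounded in terms of $k$, which yields a bound of order $\abs{H}\,n^2$ rooted $P_3$'s per neighbour; nothing in the proposal forces the constant down to $t-1$ rather than, say, $t$ (which is what the crude bound $2\N(P_2,G-v-u)\le 2\ex(n-1,P_2,P_k)$ already gives, and which overshoots the total by $tn^2$). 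The paper's key idea, absent from your proposal, is that for $u\in N(v)$ the graph $G'=G-v-u$ is $C_{\ge k-1}$-free, since a long cycle plus a path to the pendant edge $uv$ would create a $P_k$. Then either $G'$ is in fact $C_{\ge k-2}$-free, in which case the exact cherry count for graphs of bounded circumference (Theorem~\ref{p2cycles}) gives $\N(P_2,G')\le\N(P_2,G_{n-1,k-2,t-1})=(t-1)\binom{n-2}{2}+O(n)$, delivering precisely the constant $t-1$ (with separate ad hoc arguments for $k=5$ and $k=6$); or $G'$ contains a $C_{k-2}$, and a structural analysis (Claims~\ref{neinC} and~\ref{neinC2}) shows that the contribution is only $O(n)$. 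Passing from a forbidden path to forbidden long cycles is exactly what drops the parameter from $t$ to $t-1$, and I do not see how to recover this from degree thresholds alone.

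A second, smaller gap: the induction needs the exact inequality $\le a_n$, not merely agreement of the $n^2$ coefficients, and your middle-vertex bound $\frac12\sum_{x_2\ne x_4}d(x_2)d(x_4)\le\frac{t(t-1)}{2}n^2$ carries no negative lower-order correction to absorb the positive $O(n)$ error terms from the other positions. The paper handles this with a genuinely delicate computation: it sets $S=\sum_{u\in N(v)}d(u)$, subtracts forbidden configurations via $\abs{N(x)\cap N(y)}\ge d(x)+d(y)-n+1-2\cdot\mathbbm{1}_{x\in N(y)}$, applies Cauchy--Schwarz, and checks that the resulting quadratic in $S$ attains its maximum only at $S=tn$ (with a separate analysis for $t=2$); this is also what drives the uniqueness of the extremal graph. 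So ``carrying every estimate out exactly'' is not routine here, and the equality case cannot be traced through until those estimates are actually in place.
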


Let $\displaystyle a_n \coloneqq\N(P_4,G_{n+1,k,t})-\N(P_4,G_{n,k,t}),$ defined similarly as before. We can check that $\\a_n = 2t\N(P_2,G_{n-1,k-2,t-1}) + 2t(t-1)e(G_{n-2,k-4,t-2}) + {t \choose 2}(n-2)(n-3).$

As in the previous results it is enough to prove the following Claim.

\begin{claim} There exists $n_0 \in \mathbb{N}$ such that for every $n\geq n_0$, %either $\ex(n+1,P_4,P_k) = \N(P_4,G_{n+1,k,t})$ or $\ex(n+1,P_4,P_k) - \ex(n,P_4,P_k) < a_{n}.$\\
$\ex(n+1,P_4,P_k) - \ex(n,P_4,P_k) \leq a_{n}$ and equality can hold only if  $G_{n+1,k,t}$ is the only extremal graph on $n$ vertices. 

\end{claim}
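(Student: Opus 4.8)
The plan is to follow the inductive template already used above for $C_4$, $P_2$ and $P_3$. Let $G$ be a $P_k$-free graph on $n+1$ vertices with $\N(P_4,G)=\ex(n+1,P_4,P_k)$; since deleting any vertex leaves a $P_k$-free graph on $n$ vertices, it suffices to produce a vertex $v$ lying on at most $a_n$ copies of $P_4$. First I would dispose of the case $\delta(G)>t$: by the classical argument of \cite{dirac1952some} every component then has at most $k$ vertices, so $\N(P_4,G)=O_k(n)$, which is far below $\N(P_4,G_{n,k,t})=\frac{t(t-1)}{2}n^3+O(n^2)$ once $n$ is large. Hence $\delta(G)\le t$, and we take $v$ of minimum degree, so $d(v)\le t$.

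Next I would count the copies of $P_4$ through $v$ according to the position of $v$ on the path; up to reversal there are three cases. If $v$ is the central vertex, a copy is $x-u-v-w-y$ with $\{u,w\}\subseteq N(v)$ and $x,y$ distinct vertices outside $\{v,u,w\}$, giving at most $\binom{d(v)}{2}(n-2)(n-3)\le\binom{t}{2}(n-2)(n-3)$ copies --- exactly the last summand of $a_n$ --- with equality forcing $d(v)=t$, $N(v)$ a clique, and every vertex of $N(v)$ of essentially full degree. If $v$ is a second or fourth vertex, a copy is $w_0-v-w_2-w_3-w_4$ with $w_0,w_2\in N(v)$ distinct and $w_2-w_3-w_4$ a $P_2$ issuing from $w_2$; since the number of $P_2$'s issuing from a fixed vertex of a $P_k$-free graph on $m$ vertices is at most twice its number of edges, hence at most $(k-1)m$ by Theorem \ref{eg}, this case contributes only $O_k(n)$. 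If $v$ is an endpoint, a copy is $v-w_1-w_2-w_3-w_4$ with $w_1\in N(v)$; I would write it as the $P_2$ ``$v-w_1-w_2$'' glued at $w_2$ to the $P_2$ ``$w_2-w_3-w_4$'' (no over-counting occurs since $v$ is a distinguished endpoint) and bound the number of such configurations, summed over $w_1\in N(v)$.

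The main obstacle is the endpoint case: extracting the exact constant $2t\,\N(P_2,G_{n-1,k-2,t-1})$, and simultaneously matching the $\Theta(n)$ lower-order contributions (the second/fourth case together with the lower-order part of the endpoint case) against the summand $2t(t-1)e(G_{n-2,k-4,t-2})$ of $a_n$. The naive bound ``number of $P_2$'s from a vertex $\le(k-1)m$'' is only of the right order, since $k-1\approx 2t$ while the extremal count scales like $2(t-1)$. To get the sharp constant I would use that $w_1\in N(v)$ implies no path of length $k-1$ starting at $w_1$ can avoid $v$ (prepending the edge $vw_1$ would create a $P_k$), and then estimate $\sum_{w_1\in N(v)}\bigl(\#\,P_3\text{'s from }w_1\text{ in }G-v\bigr)$ by splitting the second vertex $w_2$ of such a path according to whether $w_2\in N(v)$, applying the Erd\H{o}s--Gallai edge bound to $G$ and to the $P_{k-1}$-free neighbourhoods $G[N(\cdot)]$, and invoking the already-established exact value of $\ex(\cdot,P_2,P_k)$ to absorb the trailing $P_2$; the connected extremal theorem, Theorem \ref{connEG}, is what pins down the equality case.

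Assembling the three bounds shows that at most $a_n$ copies of $P_4$ pass through $v$. For the equality statement I would chase the equality conditions case by case: in each they force $d(v)=t$, $N(v)$ a clique $K_t$, and every vertex of $N(v)$ of full degree, so that $G$ contains $K_{t,n+1-t}$ together with a clique on the size-$t$ part; combined with $P_k$-freeness and the maximality of $\N(P_4,G)$ --- which forces the single extra edge precisely when $k$ is even --- this yields $G=G_{n+1,k,t}$. (As in the $P_3$ argument, the cases $t=2$, i.e.\ $k\in\{5,6\}$, likely need to be treated slightly separately in the equality analysis.) This completes the claim.
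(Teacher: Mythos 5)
Your skeleton is the same as the paper's (delete a minimum-degree vertex $v$, dispose of $\delta(G)>t$ by Dirac's argument, split the $P_4$'s through $v$ by the position of $v$, and match the three contributions against the three summands of $a_n$), and your central-vertex bound $\binom{t}{2}(n-2)(n-3)$ is correct and tight. But the two estimates you yourself flag as "the main obstacle" are exactly where the content of the proof lies, and the tools you propose for them do not suffice. For the endpoint case, bounding the trailing $P_2$ by the already-proved value of $\ex(\cdot,P_2,P_k)$ gives a leading term $\sim\frac{t}{2}n^2$ per choice of $w_1$, hence an endpoint total $\sim t^2n^2$, whereas the summand $2t\,\N(P_2,G_{n-1,k-2,t-1})$ of $a_n$ is only $\sim t(t-1)n^2$; the resulting overshoot of order $tn^2$ cannot be absorbed, since the central-vertex term is already tight. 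The paper's way around this is a genuinely different observation: writing $G'=G-v-u$ for $u\in N(v)$ and using connectivity of $G$, the graph $G'$ cannot contain any cycle of length at least $k-1$ (such a cycle plus a connecting path plus the edge $uv$ yields a $P_k$), so apart from the subcase where $G'$ contains a $C_{k-2}$ (handled by a separate structural argument showing that subcase contributes only $O(k^2n)$ paths), one may invoke the exact theorem $\ex(n-1,P_2,C_{\ge k-2})=\N(P_2,G_{n-1,k-2,t-1})$, whose clique parameter is $t-1$ rather than $t$ --- this is precisely where the missing factor comes from (with yet another ad hoc lemma needed for $k=6$, where that theorem does not apply). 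Your remark that no $P_{k-1}$ in $G-v$ can start at $w_1$ is related but strictly weaker and is not developed into a usable bound.

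The second/fourth-vertex case has the same defect at the next order: the naive bound of $2e(G)\le(k-1)(n+1)\approx 2tn$ ordered $P_2$'s hanging off $w_2$ gives roughly $2t^2(t-1)n$, exceeding the target $2t(t-1)e(G_{n-2,k-4,t-2})\approx 2t(t-1)(t-2)n$ by $\Theta(n)$, again with no slack elsewhere to absorb it. The paper handles this (and sharpens the middle-vertex count enough to run the equality analysis) by a careful bookkeeping in terms of $S=\sum_{u\in N(v)}d(u)$, using inclusion--exclusion on common neighbourhoods, Cauchy--Schwarz, and Theorem \ref{connEG} to bound $e(G)$, then showing the resulting quadratic in $S$ attains its maximum $a_n$ only at $S=tn$ (for $t\ge3$; $t=2$ is treated separately). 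So while your decomposition and your identification of which term of $a_n$ each case must match are correct, the proof as proposed is incomplete: the two sharp constants it needs are asserted rather than obtained, and the one concrete substitute you offer ($\ex(\cdot,P_2,P_k)$) provably gives a bound exceeding $a_n$.
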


\begin{proof} Let $G$ be an $(n+1)$-vertex graph with maximum number of $P_4$ copies. Assume that $\delta(G) \geq 2$. (If a vertex $v$ has degree 1, then it is in at most $2\N(P_2,G_{n,k,t}) < a_n$ copies of $P_4$.)

If $\delta(G) > t$ then each component of $G$ must have size at most $k$ and so $\N(P_4,G) \leq 12{k \choose 4}\frac{n+1}{k}$ so we can choose $n_0$ such that if $n\geq n_0$ this number is less than $\N(P_4,G_{n,k,t}).$

%%%%%%%%%%%%%%%%%%%%%%%%%%%%%%%%%%%%%%%%%%%%%%%%%%%%%%%% Paths starting with v
Suppose now that $\delta(G) \leq t$. Let $v$ be a vertex of minimum degree. As before suppose $G$ is connected. To count the number of paths of length 4 starting at $v$, fix
$u\in N(v)$ and let $G'$ be the subgraph of $G$ obtained by removing $v$ and $u$.  Any path $vuu_1u_2u_3$ can be decomposed as the edge $vu$ together with the ordered path $u_1u_2u_3$ in $G'$  so the number of paths of the form $vuu_1u_2u_3$  is at most $2\N(P_2,G'),$ since there are two orderings of any $P_2$.  It is easy to check that $G'$ cannot contain a cycle of length at least $k-1$, otherwise together with the edge $uv$ we would have a copy of $P_k$. Thus, there are two cases: \\

%\begin{itemize}
%\item[a)] 
a) Suppose first that $G'$ is does not contain a $C_{k-2}$.  Then $G'$ is $C_{\geq k-2}$-free and so $\N(P_2,G') \leq ex(n-1,P_2,C_{\geq k}) = \N(G_{n-1,k-2,t-1},P_2)$. We will take $n_0$ bigger than the constant from Theorem~\ref{p2cycles}, when $k = 5$, or $k\geq 7$.  When $k=6$ we use the following lemma.

\begin{lemma}
If $H$ is a graph on $n$ vertices containing no cycle of length at least 4, then either $H$ contains a vertex of degree $n-1$ or $\N(P_2,H) < {n-1\choose 2} +2$.
\end{lemma}

\begin{proof}
Suppose $H$ has no vertex of degree $n-1$. If $H$ has degree 1 vertices, then the number of copies of $P_2$ is maximized when all these vertices are adjacent to the vertex of maximum degree, so suppose $H$ has no vertex of degree 1. Then $\sum_{v\in V(H)}d(v) \leq 3(n-1)$ and $2\leq d(v) \leq n-2$.  Therefore by convexity the number of copies of $P_2$ is maximized when there is one vertex of degree $n-2$, one of degree 3 and $n-2$ of degree 2.  This yields ${n-2 \choose 2} + n+1 = {n-1 \choose 2} + 3$, however a graph with such a degree sequence must have a cycle of length bigger than four. Thus we consider the second best is a graph with one vertex of degree $n-2$ and $n-1$ vextex of degree 2 (if possible) which has ${n-1\choose 2} + 1 $ copies of $P_2$.       
\end{proof}

Now according to this lemma for $k=6$, either $\N(P_2,G') < {n-2 \choose 2} +2 = \N(P_2,G_{n-1,4,1})$ or $G'$ has a vertex of degree $n-2$,  call it $w$, and some edges $t$  in $N_{G'}(w)$.  The vertex $u$ cannot be connected to two different edges in $N_{G'}(w)$, otherwise $G$ would contain a $P_6$, and if $u$ is connected to both vertices of one of these edges and to all other vertices of $G'$, then the number of copies of $P_4$ starting with $vu$ would be $(n-2t)(n-3) + 2 +2t \leq (n-2)(n-1) + 4$.   \\

%\item[b)] 
b) Now suppose that $G'$ contains a cycle of length $k-2$, $C$. In this case we have the following.

\begin{claim}
\label{neinC}
If $w$ is a vertex which is not in the cycle and $w\in N(x)$ where $x$ is a vertex of the cycle, then $w$ has at most one neighbor outside of $C$.
\end{claim}

\begin{proof}
Suppose $w_1$ and $w_2$ are two neighbors of $w$. Since $\delta(G) > 1$,  $w_1$ has a neighbor $y$. If $y$ is in $C$, then $C$ together with $w_1ww_2$ is a length $k$ path. If $y$ is outside of $C$, then $C$ together with $ww_1y$ is a $P_k$.
\end{proof}
%--------------- picture
\begin{figure}
\label{Cyclelemma}
\centering 
\begin{tikzpicture}

\draw (1,0) arc(0:360: 1cm and 1cm);
\filldraw[thick] (-1,1.5) node[align=center, above]{$w_1$} circle (2pt) -- (0,2) node[align=center, above]{$w$} circle (2pt) -- (1,1.5) node[align=center, above]{$w_2$} circle (2pt) ;
\filldraw (0,2) -- (0,1) node[align=center, below]{$x$} circle (2pt);
\filldraw[thick] (xyz polar cs:angle=160,radius=1) node[align=center,left]{$y$} circle (2pt) -- (-1,1.5);
\filldraw (xyz polar cs:angle=140,radius=1) circle (2pt);
\draw[thick] (xyz polar cs:angle=140,radius=1) arc(140:-200:1cm and 1cm);
\end{tikzpicture} \qquad
\begin{tikzpicture}
\draw (1,0) arc(0:360: 1cm and 1cm);
\filldraw[thick] (-1,1.5) node[align=center, above]{$w_1$} circle (2pt) -- (0,2) node[align=center, above]{$w$} circle (2pt);
\filldraw (0,2) -- (1,1.5) node[align=center, above]{$w_2$} circle (2pt) ;
\filldraw[thick] (0,2) -- (0,1) node[align=center, below]{$x$} circle (2pt);
\filldraw[thick]  (-1,1.5)--(0,2.5) node[align=center,left]{$y$} circle (2pt) ;

\filldraw (xyz polar cs:angle=110,radius=1) circle(2pt);
\draw[thick] (xyz polar cs:angle=110,radius=1) arc(110:450:1cm and 1cm);

%\draw (xyz polar cs:angle=30,radius=1)
\end{tikzpicture}
\caption{Lemma of the cycle in Theorem \ref{P4}.}
\end{figure}
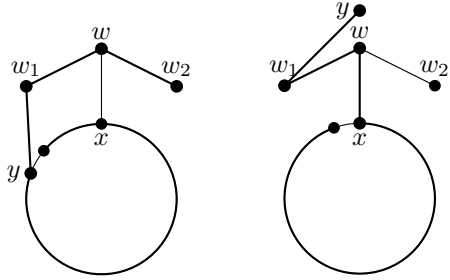

%-------------------------------

As a corollary we have,
\begin{claim}
\label{neinC2}
If $w$ is a vertex not in the cycle and $w\in N(x)$ where $x$ is a vertex of the cycle, then $d(w) < k$.
\end{claim}

The edge $uv$ is connected to $C$ by some path. If there is an intermediate vertex from $C$ to $uv$, then clearly this path will have length at least $k$ and so this is not possible. Hence $C$ is connected to either $u$ or $v$. If $C$ is connected to $v$, then every neighbor of $u$ must be in $C$ for otherwise we have a $k$ path. If $C$ is connected to $u$, then by Claim \ref{neinC}, all neighbors of $u$, except for $v$, are in $C$. So for every path $vuu_1u_2u_3$ we have that $u_1 \in C$, if $u_2 \in C$, then we have less than $k$ choices for both $u_1$ and $u_2$ and at most $n$ choices for $u_3$. If $u_2$ is not in $C$, then by Claim \ref{neinC2}, since $u_2$ is a neighbor of $u_1 \in V(C)$, we have $d(u_2) < k$ and so there are at most $k$ choices for $u_3$ and less than $n$ choices for $u_2$. Hence we have less than $2k^2n$ such paths in total and  we can take $n_0$ such that if $n\geq n_0$, then this number is less than $2\N(G_{n-1,k-2,t-1},P_2)$.
%\end{itemize}

It follows that the number of paths starting with $v$ is at most $2d(v)\N(P_2,G_{n-1,k-2,t-1}).$

Now if $d(v) \leq t-1$, then the trivial bound on the number of copies of $P_4$ with middle vertex $v$ is $d(v)(d(v)-1){n-2 \choose 2}$ and the bound on the number of $P_4$ cpoies with $v$ as a second vertex is $2d(v)(d(v)-1)e(G)$.  Thus, we would have that the number of copies of $P_4$ using $v$ is less than $a_n$. So we will now suppose $d(v) = t$. To simplify the notation in the following calculations let $S := \sum_{u\in N(v)} d(u)$.

%%%%%%%%%%%%%%%%%%%%%%%%%%%%%%%%%%%%%%%%%%%%%%%%%%%%%%% Paths with v as middle 
To count paths with $v$ in the middle, we will count in order paths of the form $xuvwy$, where $u,w$ can be any neighbors of $v$ and then we have to choose a neighbor $x$ of $u$ and a neighbor $y$ of $w$ with $y \not = x$. Hence the number of ordered paths with $v$ as the middle vertex is

%\begin{minipage}{.1\linewidth}
%\begin{flushleft}

$\displaystyle\sum_{u\in N(v)}\Bigg(\sum_{\substack{w\in N(v)\\w\not = u}}(d(u)-1-\mathbbm{1}_{u\in N(w)})(d(w)-1-\mathbbm{1}_{u\in N(w)})-|N(u)\cap N(w)|+1\Bigg)$

$\displaystyle\leq \sum_{u\in N(v)}\Bigg(\sum_{\substack{w\in N(v)\\w\not = u}}d(u)d(w) -2d(w) - 2d(u) -\mathbbm{1}_{u\in N(w)}(d(w)+d(u)) + 5\cdot\mathbbm{1}_{u\in N(w)} +n+1\Bigg) $ 

$\displaystyle = S^2 - \Bigg(\sum_{u \in N(v)} d(u)^2\Bigg)  -4(t-1)S -2\Bigg(\sum_{u\in N(v)}d(u)\abs{N(u)\cap N(v)}\Bigg) + 10e(N(v))+t(t-1)(n+1)$ 

$\displaystyle \leq S^2 - \Bigg(\sum_{u\in N(v)}d(u)^2\Bigg) -4(t-1)S - 2\Bigg(\sum_{u\in N(v)}d(u)(d(u)+t-n-1)\Bigg) +t(t-1)(n+6) $

$\displaystyle = S^2 - 3\Bigg(\sum_{u\in N(v)}d(u)^2\Bigg) +(2n-6(t-1))S + t(t-1)(n+6) $

$\displaystyle \leq S^2 - \frac{3S^2}{t} +(2n-6(t-1))S + t(t-1)(n+6) $

$\displaystyle  = \frac{t-3}{t}S^2 +(2n-6(t-1))S  +t(t-1)(n+6),$
%\end{flushleft}
%\end{minipage}\\

%If you just check this calculation, note that if S = tn this number should be t(t-1(n-2)(n-3)

where in the first and second inequality we use the fact that for every pair of vertices $x,y$ of the graph $|N(x)\cap N(y)| \geq d(x) + d(y) -n+1 - 2\cdot\mathbbm{1}_{x\in N(y)}$ together with $e(N(v)) \leq {t \choose 2}$. The last inequality was obtain by applying the Cauchy-Schwarz inequality to $\sum_{u\in N(v)}d(u)^2$. 
Since any path can have two distinct orders we divide this expression by 2.

%%%%%%%%%%%%%%%%%%%%%%%%%%%%%%%%%%%%%%%%%%%%%%%%%%%%%%%%% Paths with v as second
To count the number of paths with $v$ as the second vertex, we will decompose the path $uvwxy$ into $uvw$ together with $e=xy$. First we choose in order two neighbors of $v$, then an edge not using $u,v$ or $w$. There are at most 2 ways to connected the edge to $w$, so the number of these paths is at most 
\begin{displaymath} 
2\displaystyle\sum_{u\in N(v)}\Bigg(\sum_{\substack{w\in N(u)\\w\not = u}}e(G)-d(w)-d(u)-t+2+\mathbbm{1}_{u\in N(w)}\Bigg)
\end{displaymath} 
\begin{displaymath} 
= 2t(t-1)(e(G)-t+2) + 4e(N(v)) -4(t-1)S
\end{displaymath}
\begin{displaymath}
\leq 2t(t-1)(e(G_{n,k,t})+2) + 4{t \choose 2} -4(t-1)S.
\end{displaymath}

%%%%%%%%%%%%%%%%%%%%%%%%%%%%%%%%%%%%%%%%%%%%%%%%%%%%%%%% Sum of bounds

By summing the previous bounds, we have that the number of paths using $v$ is at most 
\begin{displaymath}2t\N(P_2,G_{n,k-2,t-1}) + \frac{t-3}{2t}S^2 +(n-7(t-1))S +t(t-1)(n+6) + 2t(t-1)(e(G_{n,k,t})+2) + 4{t \choose 2}. \end{displaymath} 
%\begin{displaymath}\leq 2t\N(P_2,G_{n,k-2,t-1} ) + \frac{t-3}{2t}\Bigg(\sum_{u\in N(v)}d(u)\Bigg)^2 +[n-7(t-1)]\sum_{u\in N(v)}d(u) +t(t-1)(2e(G)-2t +7 + \frac{n}{2}).\end{displaymath}

%By considering this expression as a quadratic on $\sum_{u\in N(v)}d(u)$ we can check that this expression  is maximize by $a_n$ only when $\sum_{u\in N(v)}d(u) = nt$, that is if $G = G_{n,k,t}$.

The value of this expression when $S=tn$ is precisely $a_n$. By considering this expression as a quadratic in $S$, we can check that if $t\geq 3$ the maximum is attained only when $S = nt.$ This means that every neighbor of $v$ must have degree $n$, so this is only possible if $G = G_{n+1,k,t}$. If $t = 2$, the expression attains its maximum when $S = 2n-14$, hence if $S < 2n -28$. This value would be less than $a_n$, but now with the condition $S \geq 2n - 28$. It is simple to check that for $k = 5$, $G$ must be either $K_{2,n-2}$ or $G_{n,5,2}$, and if $k=6$, then $G$ must be $G_{n,6,2}$.
\end{proof}

%---------------------
\section{The number of copies of $P_{k-1}$ in $P_k$-free graphs}
\label{weird case}

If $k$ is odd, it seems likely that the graph  $G_{n,k,t}$ attains the value $\ex(n,P_{k-1},P_k)$.  However, for $k$ even the situation changes. We have that $\N(P_{k-1},G_{n,k,t}) = \Theta(n^{t})$, but there is another graph $H_{n,k}$ such that $\N(P_{k-1},H_{n,k}) = \Theta(n^{t+1})$. In order to define this graph, first for $r\geq 2$ and $a,b$ positive integers, let $S^{(r)}_{a,b}$ be the $(a+b+r)$-vertex graph consisting of a clique on $r$ vertices and two independent sets $A$ and $B$ on $a$ and $b$ vertices, respectively.  Let $v$ be a vertex of the clique and join $v$ to every vertex in $B$, then join every vertex of the clique except $v$ to every vertex in $A$. Let $\mathcal{S}^{(r)}_n$ be the family of all such graphs on $n$ vertices. For even $k$, let $H_{n,k}$ be the graph in $\mathcal{S}^{(t+1)}_n$ which maximizes the number of $P_{k-1}$ copies. In this case we conjecture that the graph $H_{n,k}$ is extremal.

\begin{conjecture}
\label{weird}
If $k$ is even and $k \ge 4$, the extremal number $\ex(n,P_{k-1},P_k)$ is attained by the the graph $H_{n,k}$.
\end{conjecture}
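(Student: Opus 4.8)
The plan is a two-step one: first establish the asymptotic identity $\ex(n,P_{k-1},P_k)=(1+o(1))\,\N(P_{k-1},H_{n,k})$ (the general form of the asymptotic $P_5$-in-$P_6$-free result), and only then upgrade it to the exact statement. Write $k=2t+2$, so $P_{k-1}=P_{2t+1}$ has $2t+2$ vertices. It helps to observe that $H_{n,k}$ is a complete split graph $K_t+\overline{K}_{a+1}$ in which one fixed vertex $v$ of the independent part carries $b$ extra pendant vertices (the set $B$), where $a+b=n-t-1$ and $(a,b)$ is chosen optimally; a short argument shows every copy of $P_{2t+1}$ in $H_{n,k}$ has a pendant as an endpoint, so that $\N(P_{k-1},H_{n,k})$ equals $\max_{a+b=n-t-1}t!\cdot b\cdot a(a-1)\cdots(a-t+1)=(1+o(1))\tfrac{t!\,t^{t}}{(t+1)^{t+1}}n^{t+1}$, the maximum being attained near $a\approx tn/(t+1)$, $b\approx n/(t+1)$.

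For the asymptotic upper bound I would follow the matching-structure double counting of Section~\ref{asy}, but with an essential refinement. As in the proof of Theorem~\ref{oddP}, a copy of $P_{2t+1}$ determines a matching of $t+1$ edges (every second edge of the path); but now that matching already covers all $2t+2$ vertices, and the crude bound ``at most $t+2$ paths per matching, at most $\binom{(k-1)n/2}{t+1}(t+1)!$ matchings'' loses a constant factor (for $k=6$ it yields $54n^3$ rather than $\tfrac{8}{27}n^3$). The point is that in a $P_k$-free graph almost no $(t+1)$-matching extends to a $P_{2t+1}$: one should prove that a matching which does extend must contain a ``pendant-like'' edge whose continuation is forced to end the path, that the number of such matchings is $(1+o(1))\max_{a+b=n-t-1}b\cdot a(a-1)\cdots(a-t+1)$, and that each extends to $(1+o(1))\,t!$ paths. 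Equivalently, this requires a sharp \emph{anchored} version of Theorem~\ref{oddP} (counting copies of $P_m$ with one prescribed endpoint, $m\le k-2$), and establishing it is the main work of the asymptotic step.

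To pass from the asymptotic bound to the exact one I would run the difference-method scheme of Section~\ref{exact}: put $a_n:=\N(P_{k-1},H_{n+1,k})-\N(P_{k-1},H_{n,k})$ and try to prove $\ex(n+1,P_{k-1},P_k)-\ex(n,P_{k-1},P_k)\le a_n$, with equality only if the extremal graph on $n+1$ vertices is $H_{n+1,k}$; the usual monotone-integer-sequence argument then finishes the induction. Given an extremal $P_k$-free graph $G$ on $n+1$ vertices, the case $\delta(G)>t$ is dispatched by Dirac's argument ($O(n)$ copies of $P_{k-1}$, impossible for large $n$), so $\delta(G)\le t$; one deletes a minimum-degree vertex $v$ and must bound the number of copies of $P_{k-1}$ through $v$.

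Here is the principal obstacle, and the reason this is only a conjecture. Unlike $G_{n,k,t}$, the graph $H_{n,k}$ is not $2$-connected, and---worse for the naive scheme---there are $P_k$-free graphs with $\delta\le t$ (for instance a complete split graph $K_t+\overline{K}_m$ with a single pendant at a vertex of the independent part) whose minimum-degree vertex lies in \emph{more} than $a_n$ copies of $P_{k-1}$; such graphs are far from extremal, but the one-vertex-deletion bound alone cannot detect this. Consequently the deletion step must be supplied with a \emph{stability} input: one must first show that an extremal $G$---indeed any $(1+o(1))$-extremal $G$---is structurally close to $H_{n,k}$, essentially a complete-split-graph core $K_t+\overline{K}_{\Theta(n)}$ together with $\Theta(n)$ pendant-type vertices all hung at a single vertex of the independent side, so that the minimum-degree vertex is genuinely a pendant and its deletion removes at most $a_n$ copies, with equality forcing $G=H_{n+1,k}$. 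A further technical nuisance is that $H_{n+1,k}$ minus a minimum-degree vertex need not be $H_{n,k}$ (the optimal ratio $a:b\approx t:1$ is incompatible with unit decrements), so the telescoping should be organised around the whole family $\mathcal S^{(t+1)}_n$ rather than a single graph, with the integrality of the optimal split tracked. Proving the stability statement with enough precision to pin down the exact extremal graph is, in my estimation, the crux.
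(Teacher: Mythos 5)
The statement you are addressing is stated in the paper as a \emph{conjecture}: the paper gives no proof of it for general even $k$, only the trivially checkable case $k=4$ and the case $k=6$ (Theorem~\ref{p5p6}). Your proposal is likewise not a proof. You have correctly identified the target value $\N(P_{k-1},H_{n,k})=(1+o(1))\,t!\,t^{t}(t+1)^{-(t+1)}n^{t+1}$ and correctly diagnosed why the paper's own machinery does not transfer: the matching-structure count of Theorem~\ref{oddP} is off by a constant factor, and the vertex-deletion difference method of Section~\ref{exact} fails because a minimum-degree vertex of a non-extremal $P_k$-free graph can lie in more than $a_n$ copies of $P_{k-1}$, so the local deletion bound cannot close the induction without a global structural input. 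But the two steps on which your plan rests --- the ``sharp anchored'' version of Theorem~\ref{oddP} and the stability theorem asserting that near-extremal graphs are close to a member of $\mathcal{S}^{(t+1)}_n$ --- are exactly the open content of the conjecture; you state them as goals rather than prove them, and you give no argument (not even a sketch) for why a $(t+1)$-matching that extends to a $P_{2t+1}$ must contain a ``pendant-like'' edge, which is the claim that would have to carry the whole asymptotic upper bound. So the proposal is a reasonable research programme, but there is a genuine gap: the statement remains unproved.

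For comparison, the paper's proof of the one nontrivial known case ($k=6$) takes a different and more elementary route than the one you propose: it fixes a longest cycle $C$ in a connected $P_6$-free graph and argues by cases on $|C|\in\{3,4,5\}$, showing that either $\N(P_5,G)=O(n^2)$ or $G$ is forced to be a subgraph of some $S^{(3)}_{a,b}$ (up to one vertex), so that the count $ba(a-1)$ applies directly. This avoids both matching structures and vertex deletion, and it delivers the structural (``stability-like'') information as a byproduct of the cycle case analysis rather than as a separate input. It is worth noting that this cycle-based analysis does not obviously scale to general even $k$, which is presumably why the general statement is left as a conjecture; if you pursue your programme, the anchored path-counting lemma is the piece most likely to require a genuinely new idea.
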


\begin{remark}
For $r\geq 2$ the graphs $S^{(r)}_{a,b}$ are $P_{2r}$-free and have $(r-1)!ba(a-1)\cdots (a-r+2)$ copies of $P_{2r-1}$. In  $\mathcal{S}^{(r)}_n$ this number is maximized when $a$ is roughly  $\frac{r-1}{r}n$, and this maximum approaches $(r-1)!(\frac{(r-1)^{r-1}}{r^r})n^r$ as $n$ tends to infinity. In particular by taking $r=3$ we have a $P_6$-free graph with $8n^3/27 + O(n^2)$ copies of $P_5$.
\end{remark}

\begin{remark}
Note that the only edges of the clique in $S^{(r)}_{a,b}$ that a $P_{2r-1}$ uses are the ones that are incident with the vertex $v$. So we have several graphs for which we conjecture the number of copies of $P_{2r-1}$ is maximal, namely those subgraphs of $S^{(r)}_{a,b}$ formed by removing edges from the clique not incident to $v$. 
\end{remark}
 
Conjecture \ref{weird} can be easily checked for $k=4$, and the following theorem says that this conjecture is also true for $k=6$.

\begin{theorem}
\label{p5p6}
There exists $n_1 \in \mathbb{N}$ such that if $n \geq n_1$, then \begin{displaymath}
\ex(n,P_5,P_6) = \N(P_5,H_{n,6}) = \frac{8n^3}{27} + O(n^2).
\end{displaymath}
\end{theorem}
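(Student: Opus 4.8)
The plan is to follow the difference method used throughout Section~\ref{exact}. The lower bound is immediate: as recalled above, $S^{(3)}_{a,b}$ is $P_6$-free and contains $2\,b\,a(a-1)$ copies of $P_5$, so taking $a=\lceil 2n/3\rceil$ (hence $b$ close to $n/3$) exhibits a member of $\mathcal{S}^{(3)}_n$ with $\tfrac{8}{27}n^3+O(n^2)$ copies of $P_5$; thus $\ex(n,P_5,P_6)\ge\N(P_5,H_{n,6})=\tfrac{8}{27}n^3+O(n^2)$. Writing $a_n:=\N(P_5,H_{n+1,6})-\N(P_5,H_{n,6})=\tfrac{8}{9}n^2+O(n)$, the goal is the difference inequality $\ex(n+1,P_5,P_6)-\ex(n,P_5,P_6)\le a_n$ for all large $n$, with equality only if the $(n{+}1)$-vertex extremal graph $G$ satisfies $\N(P_5,G)=\N(P_5,H_{n+1,6})$. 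Telescoping this against $\N(P_5,H_{n,6})$ then forces $\ex(n,P_5,P_6)=\N(P_5,H_{n,6})$ for $n$ large, exactly as Theorem~\ref{c4pk} was deduced from Claim~\ref{clxyz}.

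To prove the difference inequality, let $G$ be an $(n{+}1)$-vertex $P_6$-free graph with $\N(P_5,G)=\ex(n+1,P_5,P_6)$. If $\delta(G)>t=2$, then every component of $G$ has at most $6$ vertices by the classical argument of \cite{dirac1952some}, so $\N(P_5,G)=O(n)$ and we are done; hence assume $\delta(G)\le 2$ and let $v$ be a vertex of minimum degree. It suffices to bound the number of copies of $P_5$ through $v$ by $a_n$ and to analyse when equality occurs, since deleting $v$ leaves a $P_6$-free graph on $n$ vertices. The structural fact that makes this possible is that, $G$ being $P_6$-free, \emph{every} copy $x_0x_1\cdots x_5$ of $P_5$ in $G$ is a maximal path: $N(x_0)\subseteq\{x_1,\dots,x_5\}$ and $N(x_5)\subseteq\{x_0,\dots,x_4\}$, so in particular both endpoints of any $P_5$ have degree at most $5$. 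When $d(v)\le 1$, say $N(v)=\{u\}$, the copies of $P_5$ through $v$ are exactly the $5$-vertex paths of $G-v$ with endpoint $u$, and no path of $G-v$ starting at $u$ has $6$ vertices (prepending $v$ would create a $P_6$). This reduces the case to a rooted sublemma---\emph{in a $P_6$-free graph on $m$ vertices possessing a vertex $u$ from which every path has at most $5$ vertices, the number of $5$-vertex paths starting at $u$ is $\tfrac{8}{9}m^2+O(m)$, with equality only for $S^{(3)}$-type graphs rooted at their special vertex}---which I would in turn prove by an internal difference argument on $m$.

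The case $d(v)=2$, $N(v)=\{u_1,u_2\}$, is the heart of the argument, and I would split the copies of $P_5$ through $v$ according to the position of $v$. If $v=x_0$, then $N(v)\subseteq\{x_1,\dots,x_5\}$ forces the neighbour of $v$ not used as $x_1$ to appear among $x_2,\dots,x_5$, and one argues essentially as in the $d(v)\le 1$ case---now additionally exploiting that this second neighbour must lie on the path, which is what prevents the bound from doubling over the two choices of $x_1$. If $v=x_1$ (or, symmetrically, $x_4$), then $\{x_0,x_2\}=\{u_1,u_2\}$, and since $x_0$ is an endpoint the neighbour of $v$ playing that role has degree $\le 5$; so there are no such copies unless $\min(d(u_1),d(u_2))\le 5$, and when (say) $d(u_1)\le 5$ the number of completions $x_3x_4x_5$ is controlled using that the far endpoint $x_5$ has degree $\le 4$ (it is non-adjacent to $v$), so that $x_5$ either lies in the small set $N(u_1)$, or lies in a triangle $x_3x_4x_5$---and triangles number $O(k^2n)$ in total by Theorem~\ref{luo}---or has a neighbour in $N(v)$. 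If $v=x_2$ (or $x_3$), then $\{x_1,x_3\}=\{u_1,u_2\}$, both endpoints have degree $\le 4$, and if a cycle of length $\ge k-2$ runs through a neighbour of $v$ a cycle lemma in the spirit of Claims~\ref{neinC}--\ref{neinC2} from the proof of Theorem~\ref{P4} pins down the local structure. Summing the three positional contributions and optimising the resulting quadratic in $d(u_1),d(u_2)$ yields $a_n$, with equality forcing $d(v)\in\{1,2\}$, every neighbour of $v$ to have full degree, and $G$ to be a graph in (or a clique-edge-deleted subgraph of) $\mathcal{S}^{(3)}_{n+1}$ of the optimal shape.

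The step I expect to be the main obstacle is exactly this $d(v)=2$ analysis: the only order of magnitude one gets for free (from Theorem~\ref{general}) is $n^3$, and the naive count of $P_5$'s through an internal occurrence of $v$---roughly the number of $P_3$'s hanging off a high-degree neighbour of $v$---has order $n^2$ but with a constant much larger than $8/9$, so one must use the maximality of every $P_5$ to force the outer endpoints to have bounded degree and then account separately for the resulting sub-configurations (bounded-degree endpoints, triangles, short cycles through $N(v)$) so that the total comes out to precisely $a_n$. A secondary but unavoidable complication, just as in the proof of Theorem~\ref{P4}, is that several small $k=6$-specific configurations have to be ruled out by hand, and the constants must be tracked exactly, not merely asymptotically, for the equality case to go through and yield the exact conclusion $\ex(n,P_5,P_6)=\N(P_5,H_{n,6})$.
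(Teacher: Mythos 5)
Your overall strategy is genuinely different from the paper's: you propose running the vertex-deletion/difference method of Section~\ref{exact} (delete a minimum-degree vertex, bound the copies of $P_5$ through it by $a_n=\N(P_5,H_{n+1,6})-\N(P_5,H_{n,6})=\tfrac{8}{9}n^2+O(n)$, and telescope), whereas the paper argues globally, classifying $G$ by the length of its longest cycle $C$ and showing that unless $C$ is a $4$-cycle the count is $O(n^2)$, while in the $4$-cycle case $G$ is forced to be (essentially) a subgraph of some $S^{(3)}_{a,b}$. Unfortunately your route has a genuine gap, and it is located exactly where you place the least weight: the $d(v)\le 1$ case, not the $d(v)=2$ case. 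The local bound you need --- that a minimum-degree vertex of a $P_6$-free graph lies in at most $\tfrac{8}{9}n^2+O(n)$ copies of $P_5$ --- is false. Take $G=S^{(3)}_{a,1}$ with $a=n-4$: its unique $B$-vertex $w$ has degree $1$ (and is a minimum-degree vertex), yet it lies in $2a(a-1)=2n^2+O(n)$ copies of $P_5$; in fact every $P_5$ of this graph passes through $w$. The same example, with $w$ deleted and rooted at the special clique vertex $u$, refutes your ``rooted sublemma'': $S^{(3)}_{a,0}$ has $2a(a-1)\approx 2m^2$ five-vertex paths starting at $u$, all of length at most $5$ vertices, far exceeding $\tfrac{8}{9}m^2+O(m)$.

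The underlying reason the method of Claim~\ref{clxyz} does not transfer is that for $G_{n,k,t}$ the number of copies through a minimum-degree vertex is controlled by $d(v)\le t$ and $n$ alone, whereas in the family $\mathcal{S}^{(3)}_{a,b}$ the number of $P_5$'s through a single degree-$1$ vertex ranges from $O(1)$ up to $2n^2$ depending on the global split of $a$ versus $b$; an unbalanced member concentrates essentially all of its (merely quadratic) total count onto one low-degree vertex. Of course such a graph is far from extremal, but to exploit that you must import global information about $G$ (that it carries $\Omega(n^3)$ copies, or a structural description such as the paper's longest-cycle analysis) before the local bound at $v$ can be salvaged --- and a simple averaging argument does not suffice either, since the average number of $P_5$'s per vertex in $H_{n,6}$ is $\tfrac{16}{9}n^2$, twice the target $a_n$. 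There is also a secondary soft spot in the telescoping itself: since the optimal split $(a^*,b^*)$ shifts with $n$, deleting a minimum-degree vertex from $H_{n+1,6}$ does not produce $H_{n,6}$, so even the exact identity $a_n=(\text{copies through }v\text{ in }H_{n+1,6})$ that drives the equality analysis in Theorem~\ref{c4pk} is no longer automatic and would have to be re-established. I would recommend abandoning the difference method here and, as the paper does, first pinning down the longest cycle of $G$.
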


\begin{proof}%[Proof of Theorem \ref{p5p6}]

Let $G$ be a $P_6$ free graph, and suppose $n\geq 7$. It is enough to bound the copies of $P_5$ in each connected component, so assume $G$ is connected.

Let $C$ be the largest cycle in $G$ and let $G'$ be the graph obtained by removing $C$ from $G$; clearly $C$ cannot be a 6-cycle, otherwise $G$ would contain a $P_6$. We will consider cases based on the length of $C$. \\
%\begin{itemize}
%\item[a)] 

a) Suppose $C$ is a 5-cycle with vertices $v_1,v_2,v_3,v_4,v_5$ appearing consecutively.  Then every vertex in $G'$ is connected to a vertex of $C$. Suppose that $v_1$ has a neighbor in $G'$, if $v_1$ is the only vertex of $C$ with an edge to $G'$, then $\N(P_5,G) < 24n$.  So suppose this is not the case, $v_2$ and $v_5$ cannot have neighbors in $G'$. Thus, without loss of generality, we may suppose $v_3$ has a neighbor in $G'$, then $v_4$ cannot have neighbors in $G'$, also note that $v_2$ cannot be connected with $v_4$ or with $v_5$ and so $G \subseteq G_{n,6,2}$ (where the $v_1$ and $v_3$ take the role of the high degree vertices, and the edge $v_4v_5$ is the only edge that is not incident with one of $v_1$ or $v_3$), hence $\N(P_5,G) = O(n^2)$.   \\
%I was not totally sure 8n^2 was true -ct

%more neighbors beside $v_3$ and $v_5$. Thus, $G \subseteq G_{n,6,2}$ and so $\N(P_5,G) < 8n^2$.

%\item[b)]
b) Now suppose $C$ is a 4-cycle defined by $v_1,v_2,v_3,v_4$, consecutively. Then $G'$ cannot contain a $P_3$, otherwise by connectivity we would have a path of length at least 6. Consider the set $X$  of vertices of $G'$ that have at least one neighbor in both $C$ and $G'$. Note that if $y \in G'$ is a neighbor of $x\in X$, then $y$ cannot have any other neighbor in $G'$. Also note that the only possible neighbor of $y$ in $C$ is the neighbor of $x$ in $C$ ($y$ cannot have a neighbor in $C$ if $x$ has more than one neighbor in $C$).

If $\abs{X} > 1$, then every vertex in $X$ must be adjacent to the same vertex in $C$, say $v_1$. Then $v_2$ and $v_4$ cannot have a neighbor outside of $C$.  If $v_2$ and $v_4$ are adjacent, then it also holds that $v_3$ cannot have neighbors in $G'$. It is then easy to check that $\N(P_5,G) < 6n$. So suppose $v_2$ and $v_4$ are not connected (see Figure \ref{weirdext}), then every $P_5$ in $G$ is of the form $xyv_1vv_3u$, where $x \in X$, $y$ is a neighbor of $x$, and both of $v,u$ are common neighbors of $v_1$ and $v_3$. If $a = \abs{N(v_1) \cap N(u) \cap N(v)}$ and $b$ is the number vertices in $G'$ with a neighbor in $X$, then we have that $\N(P_5,G) \leq ba(a-1)$ which is half $\N(P_5,S^{(3)}_{a,b})$ but $S^{(3)}_{a,b}$ can have at most one more vertex than $G$.     
%by connecting $v_1$ with $v_3$ and every neighbor of $v_3$ we can check that $G$ is a subgraph of a graph with  at most $4/9n^2$ $P_5's$ %The calculation is similar as the one used in our extremal graph. 

%for any $P_2$ in $G'$, the middle vertex must be connected to some vertex in $C$, also the endpoints of any such $P_2$ cannot be connected to any vertex of $C$ otherwise there would be a bigger cycle, suppose there is one $P_2$ in $G'$ with middle vertex $x$ connected to $v_1$, so $v_2$ and $v_4$ cannot be connected to anything outside of $C$, now if there is another $P_2$ in $C$ with different middle vertex $y$, is not possible for $y$ to be connected  to $v_3$ and so (by contracting $C$ to a vertex we get a tree) we have a graph that looks like... and so $\N(P_5,G) < n^2$. 

\begin{figure}
\label{weirdext}
\centering 
\begin{tikzpicture}

\foreach \xi in {0,60,...,360}{
\pgfmathsetmacro{\z}{\xi+60}
\foreach \yi in {\xi,\z,...,360}{

\filldraw  (xyz polar cs:angle=\xi,radius=1) circle (2.5pt) -- (xyz polar cs:angle=\yi,radius=1) circle (2.5pt);

}
}

\foreach \x in{-1,-0.5,...,1}{
\foreach \y in {0,60,...,360}{

\filldraw (xyz polar cs:angle=\y,radius=1) -- (\x,-2) circle (1.5pt);
}
}
\draw  (1.5,-2) arc (0:360:1.5cm and .3cm) ;
\foreach \y in {0,60,...,360}{
\filldraw (xyz polar cs:angle=\y,radius=1) -- (3,0) circle (2.5pt);
}

\foreach \x in {2,2.5,...,4}
\filldraw (3,0)-- (\x,-2) circle (2pt);

\end{tikzpicture} \qquad
\begin{tikzpicture}
\draw (-1,0) -- (0,1) -- (1,0) -- (-1,0);

\foreach \x in {-2,-1.5,...,2}{
\filldraw (-1,0) circle (2pt) -- (\x,-1)circle (2pt);

}

\foreach \x in {-2,-1.5,...,2}{
\filldraw (1,0) circle (2pt) -- (\x,-1)circle (2pt);

}

\foreach \x in {-1.5,-1,...,1.5}{
\filldraw (0,1) circle (2pt) -- (\x,2)circle (2pt);

}

\end{tikzpicture}
\begin{tikzpicture}

\filldraw (0,1) circle (2pt) -- (-.75,0) circle (2pt);
\filldraw (0,-1) circle (2pt) -- (-.75,0) circle (2pt);
\filldraw (0,1) circle (2pt) -- (.75,0) circle (2pt);
\filldraw (0,-1) circle (2pt) -- (.75,0) circle (2pt);

\draw  (-1.5,1.5) circle (2pt) -- (-1,2) -- (0,1);
\draw (-.5,2) circle (2pt) --  (0,1.5) circle (2pt) -- (.5,2) circle (2pt);

\draw (0,1) circle (2pt) --  (0,1.5) circle (2pt) -- (0,2) circle (2pt);

\draw (1.5,2) --  (1.5,1.5)  -- (1,2);

\draw (-1.5,1.5) -- (0,1) -- (1.5,1.5); 

\draw (0,1) -- (-1.5,0) -- (0,-1) ;
\draw (0,1) --  (1.5,0) -- (0,-1) ;
\draw (0,1) --  (0,-1) ;

\filldraw     (-1.5,1.5) circle (2pt)  (-1,2) circle (2pt) (-.5,2) circle (2pt)  (0,1.5) circle (2pt)  (.5,2) circle (2pt)  (0,2) circle (2pt) (1.5,2) circle (2pt)  (1.5,1.5) circle (2pt) (1,2) circle (2pt) (-1.5,1.5) circle (2pt) (1.5,1.5)circle (2pt) (-1.5,0) circle (2pt) (1.5,0) circle (2pt); 

\end{tikzpicture}
\caption{The family $\s^{(r)}_{a,b}$ from which the conjectured extremal graph $H_{n,k}$ is obtained. And a graph that appears in Case b) of the proof of the Theorem \ref{p5p6}.}
\end{figure}
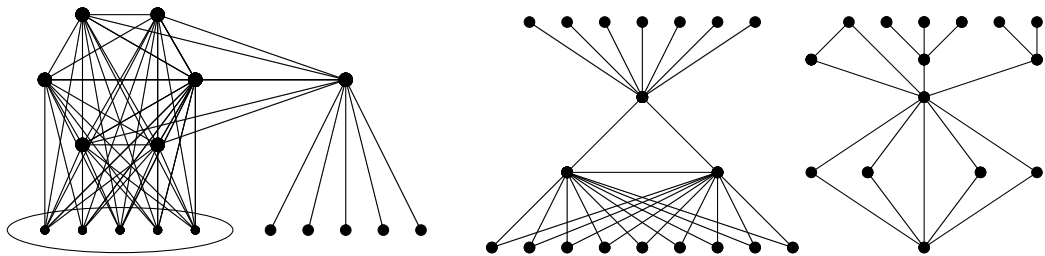
%----------------------

If $X= \{x\}$, then something similar holds, except that both $v_1$ and $v_3$ are allow to be connected to $x$ and there is the extra possibility of $G$ being a subgraph of $S^{(3)}_{a,b}$.

%Now suppose every $P_2$ in $G'$ has $x$ as a middle vertex, then by considering the sets $A = N(v_1) \cup N(v_3)\backslash\{x\}$ and $B = N(x)\backslash\{v_1,v_3\}$ we have that $G$ is a subgraph of $H_{a,b}$ for which $\N(P_5,\cdot)$ is maximize when ... and so $\N(P_5,G) \leq 8/9n^3$. 

Now suppose $X = \emptyset $. If no two vertices of $C$ share a common vertex in $G'$, then $\N(P_5,G)$ is quadratic. So suppose two non-consecutive vertices, say $v_1$, and $v_3$, share a common neighbor, then it is not possible for the other two vertices in $C$ to have a neighbor in $G'$. Thus, our graph is again a subgraph of $S^{(3)}_{a,b}$. \\

%\item[c)] 
c) Suppose $C$ is a triangle, then every pair of vertices are the end vertices of at most one $P_5$. If two different paths of length 5 have the same end vertices, then either $G$ would contain a cycle of length at least four or a $P_6$. Thus, $\N(P_5,G) < {n\choose 2}$. \end{proof}
%\end{itemize}

%\nocite{*}
\bibliographystyle{abbrvnat}
% use the following instead if you encounter problems 
%\bibliographystyle{alpha}
%\bibliography{sample-dmtcs}
%\label{sec:biblio}
%\begin{thebibliography}{1}
\bibliography{plpkbib.bib}
%%%%My commented out Bibliography below

\begin{comment}

\end{comment}

\end{document}